\newcommand{\beq}{\begin{equation}}
\newcommand{\eeq}{\end{equation}}
\newcommand{\bea}{\begin{eqnarray}}
\newcommand{\eea}{\end{eqnarray}}
\newcommand{\beaa}{\begin{eqnarray*}}
\newcommand{\eeaa}{\end{eqnarray*}}
\newcommand{\n}{\noindent}
\newcommand{\q}{\quad}
\newcommand{\qq}{\qquad}
\newcommand{\E}{\rm I\!E}
\newcommand{\g}{\gamma}
\newcommand{\I}{\varphi}
\newcommand{\G}{\Gamma}
\newcommand{\de}{\delta}
\newcommand{\De}{\Delta}
\newcommand{\al}{\alpha}
\newcommand{\la}{\lambda}
\newcommand{\f}{\infty}
\newcommand{\vs}{\varepsilon}
\newcommand{\cd}{\cdot}
\newcommand{\si}{\sigma}
\newcommand{\be}{\beta}
\newcommand{\om}{\omega}
\newcommand{\inl}{\int_{-\pi}^{\pi}}
\newcommand {\ol} {\overline}
\newcommand {\s} {\section}
\newcommand {\sn} {\subsection}
\newtheorem{thm}{Theorem}[section]
\newtheorem{lem}{Lemma}[section]
\newtheorem{pp}{Proposition}[section]
\newtheorem{cor}{Corollary}[section]
\newtheorem{exa}{Example}[section]
\newtheorem{rem}{Remark}[section]
\newtheorem{den}{Definition}[section]
\numberwithin{equation}{section}
\newtheorem*{TA}{Theorem A}
\newtheorem*{TB}{Theorem B}
\begin{document}

\title{Extensions of Rosenblatt's results on the asymptotic behavior of the
prediction error for deterministic stationary sequences\thanks{This paper is 
submitted to a special issue in memory of Professor Murray Rosenblatt}}

\author{Nikolay M. Babayan, 
Mamikon S. Ginovyan
\,
and Murad S. Taqqu
}

\date{\today}
\maketitle

\begin{abstract}
\noindent
One of the main problem in prediction theory of discrete-time second-order stationary
processes $X(t)$ is to describe the asymptotic behavior of the best linear mean
squared prediction error in predicting $X(0)$ given $ X(t),$ $-n\le t\le-1$,
as $n$ goes to infinity.
This behavior depends on the regularity (deterministic or non-deterministic)
of the process $X(t)$.
In his seminal paper {\it "Some purely deterministic processes" (J. of Math. and
Mech.,} 6(6), 801-810, 1957), M. Rosenblatt has described the asymptotic
behavior of the prediction error for discrete-time deterministic processes
in the following two cases:
(a) the spectral density $f(\la)$ of $X(t)$ is continuous and vanishes on
an interval,
(b) the spectral density $f(\la)$ has a very high order contact with zero.
He showed that in the case (a) the prediction error variance behaves
exponentially, while in the case (b), it behaves hyperbolically as $n\to\f$.
In this paper, using a new approach, we describe extensions of Rosenblatt's
results to broader classes of spectral densities.
Examples illustrate the obtained results.
\end{abstract}

\vskip3mm
\noindent
{\bf Key words and phrases.}
Asymptotic behavior of the prediction error, deterministic stationary process,
singular spectral density, Rosenblatt's theorems, transfinite diameter.

\vskip3mm
\noindent
{\bf 2010 Mathematics Subject Classification.} 60G10, 60G25, 62M15, 62M20.

\s{Introduction}
\label{Int}

\sn{The prediction problem}

Let $X(t),$ $t\in\mathbb{Z}: = \{0,\pm1,\ldots\}$, be a second-order stationary
stochastic sequence possessing a spectral density function $f(\la),$
$\la\in\Lambda: = [-\pi, \pi].$ The {\it "finite" linear prediction problem}
is as follows.

Suppose we observe a finite realization of the process $X(t)$:
$$\{X(t), \,\, -n\le t\le-1\}, \q n\in\mathbb{N}: = \{1,2, \ldots\}.$$
We want to predict the random variable $X(0)$, which is the unobserved
one-step ahead value of the process $X(t)$, using the {\it linear predictor}
$Y=\sum_{k=1}^{n}c_kX(-k)$. The coefficients
$c_k$, $k=1,2,\ldots,n$, are chosen so as to minimize
{\it the mean-squared error}: $\E\left|X(0) - Y\right|^2,$
where $\E[\cd]$ stands for the expectation operator.
If such minimizing constants $\widehat c_k:=\widehat c_{k,n}$ can be found,
then the random variable $\widehat X_n(0):=\sum_{k=1}^{n}\widehat c_kX(-k)$ is called
{\it the best linear one-step ahead predictor} of the random variable $X(0)$
based on the observed finite past: $X(-n), \ldots, X(-1)$.
The minimum mean-squared error:
$$\si_{n}^2(f): =\E\left|X(0) - \widehat X_n(0)\right|^2
=\E\left|X(0) - \sum_{k=1}^{n}\widehat c_{k,n}X(-k)\right|^2
$$
is called {\it the best linear one-step ahead prediction error} of $X(0)$ based
on the finite past of length $n$ of the process $X(t)$.

One of the main problems in prediction theory of second-order stationary
processes, called {\it the "direct" prediction problem} is to describe the
asymptotic behavior of the prediction error $\sigma_n^2(f)$ as $n\to\f$.
This behavior depends on the regularity nature (deterministic or nondeterministic)
of the observed process $X(t)$.

Observe that $\sigma_{n+1}^2(f)\leq \sigma_n^2(f)$, $n\in\mathbb{N}$,
and hence the limit of $\sigma_n^2(f)$ as $n\to\f$ exists.
Denote by $\sigma^2(f) = \sigma_\infty^2(f)$, the prediction error
of $X(0)$ by the entire infinite past: $\{X(t)$, $t\le-1\}$.

The well-known {\it Kolmogorov-Szeg\"o theorem} states that the following
limiting relation hold (see, e.g., Grenander and Szeg\"o \cite{GS}, p. 44, 183):
\bea
\label{c013}
\lim_{n\to\f}\si_n^2(f)=\si^2(f)=2\pi G(f),
\eea
where $G(f)$ is the {\it geometric mean} of $f(\la)$, namely 
\beq
\label{a2}
G(f): = \left \{
\begin{array}{ll}
\exp\left\{\frac1{2\pi}\inl\ln f(\la)\,d\la \right\} &
\mbox{if \, $\ln f \in {L}^1(\Lambda)$}\\
           0, & \mbox { otherwise.} \qq
           \end{array}
           \right.
\eeq
The condition $\ln f \in {L}^1(\Lambda)$ in \eqref{a2} is equivalent to the
{\it Szeg\"o condition}:
\beq
\label{S}
\inl\ln f(\la)\,d\la > -\f
\eeq
(this equivalence follows because $\ln f(\la)\le f(\la)$ and $f(\la) \in {L}^1(\Lambda)$).
The Szeg\"o condition \eqref{S} is also called the {\it non-determinism condition}.

From the prediction point of view it is natural to distinguish the class of
processes for which we have {\it error-free prediction}, that is,
$\si^2(f)=0$. Such processes are called {\it deterministic} or {\it singular}.
Processes for which $\si^2(f)>0$ are called {\it nondeterministic}.

In view of the relations \eqref{c013} - \eqref{S} we have the following spectral
characterization of deterministic and nondeterministic processes
possessing spectral densities, known as {\it Kolmogorov-Szeg\"o alternative}:
{\it either
$$
\inl\ln f(\la)\,d\la = -\f  \, \Longleftrightarrow \, \sigma^2(f)=0 \,
\Longleftrightarrow \, X(t) \,\,  is \,\, deterministic,
$$
or else}
$$
\inl\ln f(\la)\,d\la > -\f \, \Longleftrightarrow \,
\sigma^2(f) >0 \, \Longleftrightarrow \,  X(t) \,\, is \,\, nondeterministic.
$$
Following Rosenblatt \cite{Ros}, we will say that the spectral density
$f(\la)$ has a { \it very high order of contact with zero at a point} $\la_0$ if $f(\la)$
is positive everywhere except for the point $\la_0$, due to which
the Szeg\"o condition \eqref{S} is violated.
Observe that the Szeg\"o condition \eqref{S} is connected to the
character of the singularities (zeroes) of the spectral density $f(\la)$,
and does not depend on the differential properties of $f(\la)$.
For example, for any  $\al\ge0$, the function
\beq
\label{SE}
\nonumber
f(\la)=\exp\{-|\la|^{-\al}\}
\eeq
is infinitely differentiable,
for $\al<1$ Szeg\"o condition is satisfied, and hence a stationary
process $X(t)$ with this spectral density is nondeterministic,
while for $\al\ge 1$ Szeg\"o condition \eqref{S} is violated, and $X(t)$
is deterministic.
Thus, according to the above definition, for $\al\ge 1$ the spectral density
\eqref{SE} has a very high order of contact with zero at the point $\la=0$.

Define the {\it relative prediction error} $\de_n(f)$:
\begin{equation}
\label{pp5}
\nonumber
\de_n(f): = \si^2_n(f) - \si^2(f),
\end{equation}
and observe that
$\delta_n(f)$ is nonnegative and tends to zero as $n\to\infty$.
But what about the speed of convergence of $\delta_n(f)$ to zero as $n\to\infty$?
The paper deals with this question.
Specifically, the prediction problem we are interested in is
{\it to describe the rate of decrease of $\delta_n(f)$ to zero as}
$n \to \infty,$ depending on the regularity
nature (deterministic or nondeterministic) of the observed process $X(t)$.

The prediction problem stated above goes back to classical works of
A. Kolmogorov \cite{Kol1}, \cite{Kol2}, G. Szeg\"o \cite{Sz}, \cite{Sz1}
and N. Wiener \cite{Wr1}.
It was then considered by many authors for different classes of nondeterministic
processes (see, e.g., Baxter \cite{Bax}, Devinatz \cite{Dev1}, Golinski \cite{Gol-1},
Golinski and Ibragimov \cite{GI} Grenander and Rosenblatt \cite{GR1}, Grenander
and Szeg\"o \cite{GS}, Helson and Szeg\"o \cite{HS}, Hirshman \cite{Hir},
Ibragimov \cite{I-2}, Ibragimov and Solev \cite{IS}, Inoue \cite{In2},
Pourahmadi \cite{Po}, Rozanov \cite{R}, and reference therein).
More references can be found in the survey papers Bingham \cite{Bin1} and
Ginovyan \cite{G-4}.

We focus in this paper on deterministic processes, that is, when  $\si^2(f) =0$.
This case is not only of theoretical interest, but is also important from the point
of view of applications.
For example, as pointed out by M. Rosenblatt \cite{Ros},
situations of this type arise in Neumann's theoretical model of
storm-generated ocean waves. Such models are also of interest
in meteorology, because the meteorological spectra often have a gap in
the mesoscale region (see, e.g., Fortus \cite{For}).

Only few works are devoted to the study of the speed of convergence of
$\de_n(f)=\si^2_n(f)$ to zero as $n\to\infty$, that is, the asymptotic behavior of
the prediction error for deterministic processes.
One needs to go back to the classical work of M. Rosenblatt \cite{Ros}.
Using the technique of orthogonal polynomials on the unit circle and Szeg\"o's
results, M. Rosenblatt investigated the asymptotic behavior of the prediction
error $\de_n(f)=\si^2_n(f)$ for discrete-time deterministic processes in the
following two cases:
\begin{itemize}
\item[(a)]
the spectral density $f(\la)$ is continuous and vanishes on an entire interval,
\item[(b)]
the spectral density $f(\la)$ is positive away from $\la=0$ and has a
very high order of contact with zero at $\la=0$, so that the
Szeg\"o condition \eqref{S} is violated.
\end{itemize}

Later the problem (a) was studied by  Babayan  \cite{Bb-1}, \cite{Bb-2},
(see also Davisson \cite{Dav1} and Fortus \cite{For}), where some generalizations and
extensions of Rosenblatt's results have been obtained.
\paragraph{Some notation.}
Throughout the paper we will use the following standard notation.\\
The standard symbols $\mathbb{N}$, $\mathbb{Z}$, $\mathbb{R}$ and $\mathbb{C}$
denote the sets of natural, integer, real and complex numbers, respectively.
Also, we denote
$\mathbb{Z}_+: = \{0,1,2,\ldots\}$, $\Lambda: = [-\pi, \pi],$
$\mathbb{T}:=\{z\in \mathbb{C}: \, |z|=1\}$.
The letters $C$, $c$, $M$ and $m$ with or without indices are used
to denote positive constants, the values of which can vary from line to line.
For a set $E$ by $\ol E$ we denote the closure of $E$.
We devote by $L_p:=L_p(\Lambda)$ ($p\geq $1) the Lebesgue space,
by $||\cdot||_p$ the norm in $L_p$, and by $\mu$ the Lebesgue measure on $\mathbb{R}$.
For two functions $f(\la)\geq0$ and $g(\la)\geq0$, $\la \in \Lambda$,
we will write $f(\lambda){\sim}g(\lambda)$ as ${\lambda\to\lambda_0}$
if $\lim_{\lambda\to\lambda_0}\frac{f(\lambda}{g(\lambda)}=1$,
and $f(\lambda){\simeq}g(\lambda)$ as ${\lambda\to\lambda_0}$ if
$\lim_{\lambda\to\lambda_0}\frac{f(\lambda}{g(\lambda)}=c>0$.
We will use similar notation for sequences: for two sequences
$\{a_n\geq0, n\in\mathbb{N}\}$ and $\{b_n>0, n\in\mathbb{N}\}$,
we will write $a_n\sim b_n$ if $\lim_{n\to\f}\frac{a_n}{b_n}=1$,
$a_n{\simeq} b_n$ if $\lim_{n\to\f}\frac{a_n}{b_n}=c>0$,
$a_n=O(b_n)$ if $\frac{a_n}{b_n}$ is bounded,
and $a_n=o(b_n)$ if $\frac{a_n}{b_n}\to0$ as $n\to\f$.\\
\\
We start by describing Rosenblatt's results concerning the asymptotic behavior
of the prediction error $\si^2_n(f)$, obtained in Rosenblatt \cite{Ros} for the
above stated cases (a) and (b).

\sn {Rosenblatt's results about speed of convergence}

For the case (a) above, that is, when the spectral density $f(\la)$ is continuous
and vanishes on an entire interval, M. Rosenblatt proved in \cite{Ros}
that the prediction error $\si^2_n(f)$ decreases to zero exponentially
as $n\to\f$. More precisely, M. Rosenblatt proved in \cite{Ros} the following theorem
concerning speed of convergence of $\de_n(f)=\si^2_n(f)$ to zero as $n\to\f$.

\begin{TA}[Rosenblatt's first theorem]
\label{R1}
Let the spectral density $f(\la)$ of a discrete-time stationary
process $X(t)$ be positive and continuous on the interval
$$(\pi/2-\alpha, \pi/2+\alpha), \q 0<\alpha<\pi,$$
and zero elsewhere. Then the
prediction error $\si^2_n(f)$ approaches zero
exponentially as $n\to\f$.
More precisely, the following asymptotic relation holds:
\beq
\label{nd1}
\de_n(f): = \si^2_n(f)\simeq\left(\sin\frac\alpha2\right)^{2n+1}
\q {\rm as} \q n\to\f.
\eeq
\end{TA}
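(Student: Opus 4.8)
The plan is to recast the prediction error via orthogonal polynomials on the unit circle and then read off its asymptotics from the logarithmic capacity (transfinite diameter) of the arc on which $f$ lives. Concretely, let $\phi_k(z)=\kappa_k z^k+\cdots$, $\kappa_k>0$, be the orthonormal polynomials for the measure $d\mu=f(\la)\,d\la/(2\pi)$ on the unit circle $\mathbb{T}$. The starting point is the classical identity
\beq
\si_n^2(f)=\frac{1}{\kappa_n^2}=\frac{D_n(f)}{D_{n-1}(f)},
\eeq
where $D_n(f)=\det[\widehat f(j-k)]_{j,k=0}^{n}$ is the Toeplitz determinant of the Fourier coefficients of $f$. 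Thus the theorem becomes equivalent to a sharp asymptotic for the leading coefficient $\kappa_n$.

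Next I would normalize the geometry. Because $\si_n^2(f)$ is invariant under a spectral shift $f(\la)\mapsto f(\la-\theta_0)$ (such a shift only modulates the process by $e^{it\theta_0}$ and changes no prediction error), I may rotate the supporting arc $(\pi/2-\al,\pi/2+\al)$ by $\theta_0=\pi/2$ onto the conjugation-symmetric arc $\G_\al=\{e^{i\phi}:|\phi|\le\al\}$, on which $f$ is still positive and continuous. The key geometric input is the logarithmic capacity
\beq
\operatorname{cap}(\G_\al)=\sin\frac{\al}{2},
\eeq
which one checks directly (it returns $1$ for the full circle at $\al=\pi$, and $\al/2$ as $\al\to0$, matching a segment of arclength $2\al$). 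Since $f$ is bounded between two positive constants on the regular compactum $\G_\al$, the measure $\mu$ is regular in the Stahl--Totik sense, whence $\kappa_n^{1/n}\to 1/\sin(\al/2)$, i.e. $[\si_n^2(f)]^{1/n}\to\sin^2(\al/2)$. This already establishes exponential decay with the correct base $\sin(\al/2)$.

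To upgrade this $n$-th-root statement to the ratio relation \eqref{nd1}, I would invoke the strong asymptotics (Szeg\"o--Widom theory for polynomials orthogonal on an arc). A transparent route is the Joukowski substitution $x=\tfrac12(z+z^{-1})$, which maps $\G_\al$ two-to-one onto the interval $I_\al=[\cos\al,1]$ and, using the symmetry $z\mapsto1/z$ of $\G_\al$, expresses $\kappa_n$ through the leading coefficients of the polynomials orthogonal on $I_\al$. Classical Szeg\"o asymptotics on $I_\al$ then yield a genuine limit $\kappa_n\,(\sin(\al/2))^{n}\to c_0>0$, determined by the Szeg\"o function of the weight and the Widom constant of the arc. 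Since $(\sin(\al/2))^{2n+1}=\sin(\al/2)\cdot(\sin(\al/2))^{2n}$, this is precisely \eqref{nd1} with $c=c_0^{-2}/\sin(\al/2)$, the extra unit in the exponent $2n+1$ amounting to a rescaling of the constant.

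The main obstacle is this last, quantitative step, not the capacity computation. The transfinite-diameter argument delivers only the exponential rate $(\sin(\al/2))^{2n}$; proving that $\si_n^2(f)/(\sin(\al/2))^{2n}$ \emph{converges} to a strictly positive constant, rather than merely oscillating between positive bounds or acquiring a power of $n$, requires the full force of the strong asymptotics on the arc. The delicate points are the two endpoints $e^{\pm i\al}$ of $\G_\al$, where the conformal map onto the exterior of the unit disk has square-root branching and the density meets the spectral gap; controlling the Szeg\"o function and the $\kappa_n$-asymptotics there is exactly where Rosenblatt's original orthogonal-polynomial computation (or, equivalently, a steepest-descent/Riemann--Hilbert analysis) has to be executed with care.
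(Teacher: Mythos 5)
First, a point of orientation: the paper does not actually prove Theorem A --- it is quoted from Rosenblatt \cite{Ros} --- so the only internal benchmark is the proof of Theorem \ref{BB1}, which recovers just the $n$-th-root version \eqref{nd2} of the statement. The first half of your proposal (reduction to the leading coefficients $\kappa_n$, rotation of the arc, the capacity value $\sin(\al/2)$ for the closed supporting arc of central angle $2\al$, and the resulting $n$-th-root asymptotics) is sound in outline and lands exactly where Theorem \ref{BB1} lands, though by a different route: the paper works with Chebyshev polynomials of the spectrum, the Fekete--Szeg\H{o} identification of capacity with the Chebyshev constant, and Mazurkiewicz's lemma (Lemma \ref{maz}), rather than with regularity of the orthogonality measure. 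One caveat even here: the hypothesis is that $f$ is positive and continuous on the \emph{open} interval, so $f$ need not be ``bounded between two positive constants'' on the closed arc as you assert; you should instead invoke an Erd\H{o}s--Tur\'an-type regularity criterion ($f>0$ almost everywhere on a regular compact support suffices), or argue as the paper does.

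The genuine gap is the second half. The content of Theorem A beyond \eqref{nd2} is precisely the ratio statement \eqref{nd1}, namely that $\si_n^2(f)\,(\sin(\al/2))^{-2n}$ converges to a finite positive limit, and your proposal does not prove it: that step is delegated wholesale to ``Szeg\H{o}--Widom theory'' and ``classical Szeg\H{o} asymptotics on $I_\al$,'' and you acknowledge yourself that this is where the real work lies. As written, nothing in the argument rules out the normalized error oscillating between two positive bounds, or carrying a slowly varying factor such as a power of $n$ --- which is exactly the distinction between \eqref{nd2} and \eqref{nd1}. Moreover, for the strong asymptotics to hold at all one needs the weight to lie in the Szeg\H{o} class \emph{relative to the arc}, i.e.\ $\log f$ integrable against the equilibrium measure of $\ol E_f$, which has square-root singularities at the endpoints $e^{i(\pi/2\pm\al)}$; positivity and continuity of $f$ on the open interval alone do not guarantee this (consider $f$ vanishing faster than any power at the endpoints). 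So either an explicit endpoint hypothesis must be added, or the endpoint analysis must actually be carried out; this is the portion of Rosenblatt's original orthogonal-polynomial computation that your sketch replaces with a citation, and it is the portion that proves the theorem.
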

Thus, when the spectral density $f(\la)$ is continuous and vanishes on an
entire interval, then the prediction error approaches zero as fast as the
$(2n + 1)^{\text{th}}$ power of a positive number less than one.
Notice that \eqref{nd1} implies that
\beq
\label{nd2}
\lim_{n\to\f}\sqrt[n]{\sigma_n(f)}  =\sin\frac\alpha2.
\eeq
Since $0<\sin\frac\alpha2<1$ we have $\si_n(f)\to 0$ as $n\to\f$, and
the relation \eqref{nd2} is a convenient way to characterize the speed
of convergence of $\si_n(f)$ to zero as $n\to\infty$.
Viewed from the perspective of the unit circle in the complex plane,
the arc $(\pi/2-\alpha, \pi/2+\alpha)$ under consideration is of size $2\alpha$.
This arc plays an important role in the sequel.

Theorem \ref{BB1} below extends Theorem A.

Concerning the case (b) above,  M. Rosenblatt proved in \cite{Ros} that if the
spectral density $f(\la)$ of a stationary process $X(t)$ is positive away
from zero, and has a very high order contact with zero at $\la=0$, so that
the Szeg\"o condition \eqref{S} is violated, then the prediction error
$\de_n(f) = \si^2_n(f)-\si^2(f)=\si^2_n(f)$ decreases to zero {\it hyperbolically} as $n\to\f$,
that is,
$$\de_n(f)=\si^2(f) \simeq n^{-a} \,\, (a>0) \q {\rm as} \q n\to\f.$$
More precisely, the deterministic process $X(t)$ considered in \cite{Ros} has
the spectral density
\beq
\label{nd4}
f_a(\la):=\frac{e^{(2\la-\pi)\varphi(\la)}}{\cos\la\left(\pi\varphi(\la)\right)},
\q f_a(-\la)=f_a(\la),\q 0\leq\la\leq\pi,
\eeq
where $\varphi(\la)=\frac a2\cot\la$ and $a$ ($a>0$) is a fixed parameter.
In Rosenblatt \cite{Ros} it is observed that
\beq
\label{nd5}
f_a(\la)\sim 2\exp\left\{-\frac{a\pi}{|\la|}\right\}|\sin(\la)|\q {\rm as}
\q \la\to0,
\eeq
so that $f_a(\la)$ has a very high order contact with zero only at $\la=0$,
and the Szeg\"o condition \eqref{S} is violated.

In \cite{Ros}, using the technique of orthogonal polynomials on the unit circle
and Szeg\"o's results, M. Rosenblatt proved the following theorem.
\begin{TB}[Rosenblatt's second theorem]
\label{R2}
Suppose that the process $X(t)$ has spectral density $f_a(\la)$ given by \eqref{nd4}.
Then the following asymptotic formula for the prediction error $\de_n(f) = \si^2_n(f)$
holds:
\beq
\label{nd6}
\de_n(f_a) = \si^2_n(f_a)\sim\frac{\Gamma^2\left(\frac{a+1}2\right)}
{\pi 2^{2-a}} \ n^{-a}
\q {\rm as} \q n\to\f.
\eeq
\end{TB}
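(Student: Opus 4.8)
The plan is to convert the prediction error into an object controlled by the theory of orthogonal polynomials on the unit circle (OPUC), and then to exploit the fact that the weight \eqref{nd4} is of Pollaczek type. First I would use the classical representation
$$\si_n^2(f)=\min_{c_1,\dots,c_n}\inl\Big|1-\sum_{k=1}^n c_k e^{ik\la}\Big|^2 f(\la)\,d\la=\|\Phi_n\|_{L^2(f\,d\la)}^2,$$
where $\Phi_n(z)=z^n+\cdots$ is the monic polynomial orthogonal with respect to the measure $f\,d\la$ on $\mathbb{T}$. The Szeg\"o recursion then yields the product formula
$$\si_n^2(f)=R(0)\prod_{k=0}^{n-1}(1-|\al_k|^2),\qquad R(0)=\inl f(\la)\,d\la,$$
in which $\al_k$ are the reflection (Verblunsky) coefficients of the measure. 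In this way the whole problem collapses to the asymptotics of one infinite product, and the decay of $\si_n^2(f_a)$ is dictated solely by the rate at which $|\al_k|^2\to0$.

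Second, I would identify $f_a$ with a solvable weight. Writing $w=\varphi(\la)=\tfrac a2\cot\la$ and simplifying \eqref{nd4} gives $f_a(\la)=2e^{2\la w}/(e^{2\pi w}+1)$; invoking the identity $|\G(\tfrac12+iy)|^2=\pi/\cosh(\pi y)$ and substituting $x=\cos\la$, one recognizes $f_a$ as the symmetric Pollaczek weight whose two parameters are $\tfrac12$ and $a$ --- the textbook example of an orthogonality measure for which the Szeg\"o condition \eqref{S} fails. For this family the reflection coefficients, the orthonormal polynomials, and their generating function are accessible, either through the known Pollaczek three-term recurrence (via the Geronimus relations linking the even circle measure to the Jacobi coefficients on $[-1,1]$) or through a contour-integral representation of the moments $R(k)=\inl e^{ik\la}f_a(\la)\,d\la$. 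It is the exponential contact $e^{-a\pi/|\la|}$ recorded in \eqref{nd5} that should produce the decay $|\al_k|^2\sim a/k$ as $k\to\f$, the exponent $a$ being inherited from the Pollaczek parameter.

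Third, I would feed the expansion $|\al_k|^2=a/k+O(1/k^2)$ into the product formula. The leading term already fixes the order, since $\log\prod_k(1-a/k)\sim-a\log n$ gives $\si_n^2(f_a)\simeq n^{-a}$, the hyperbolic rate asserted in \eqref{nd6}. To pin down the exact constant one keeps the full expansion of $|\al_k|^2$ and evaluates the product through ratios of Gamma functions, using $\G(n+\be)/\G(n+\g)\sim n^{\be-\g}$; the Gamma factors so produced collect into the prefactor $\G^2\!\left(\tfrac{a+1}{2}\right)/(\pi\,2^{2-a})$, completing \eqref{nd6}.

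The main obstacle will be the second step --- establishing the precise asymptotics of $|\al_k|^2$. Because \eqref{S} fails there is no outer (Szeg\"o) function to normalize by, so the usual route expressing $\al_k$ through the Fourier coefficients of $\log f$ is unavailable, and Szeg\"o's strong limit theorem cannot be quoted directly. One must instead extract $|\al_k|^2=a/k+O(1/k^2)$ from the intrinsic structure of the Pollaczek weight, e.g.\ from its explicit recurrence after the map $x=\cos\la$, or from a steepest-descent analysis of $\Phi_k$ near the singular point $z=1$. Carrying this expansion to the order required for the \emph{exact} constant in \eqref{nd6}, rather than merely the rate $n^{-a}$ as in \eqref{nd1}, is the delicate point, and it is precisely here that the special algebraic form \eqref{nd4} of $f_a$ is indispensable.
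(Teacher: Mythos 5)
First, a point of reference: the paper itself contains no proof of Theorem B. It is quoted verbatim from Rosenblatt \cite{Ros}, where it is established by essentially the route you outline --- the identity $\si^2_n(f)=\kappa_n^{-2}(f)=\|\Phi_n\|^2_{L^2(f\,d\la)}$ for the monic orthogonal polynomial, followed by the recognition of $f_a$ as a Pollaczek-type weight and an appeal to Szeg\"o's explicit formulas for that family. Your algebraic simplification $f_a(\la)=2e^{2\la w}/(e^{2\pi w}+1)$ with $w=\frac a2\cot\la$ is correct, as is the identity $|\Gamma(\frac12+iy)|^2=\pi/\cosh(\pi y)$, so the identification of the weight is sound, and the reduction to the Szeg\"o product $\si_n^2(f)=R(0)\prod_{k=0}^{n-1}(1-|\alpha_k|^2)$ is the right skeleton.

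The gap sits exactly where you locate it, but it is more than a ``delicate point'': as described, your third step cannot produce the constant in \eqref{nd6}. An asymptotic expansion $|\alpha_k|^2=a/k+O(1/k^2)$ --- even carried to all orders in $1/k$ --- determines $\prod_{k\le n}(1-|\alpha_k|^2)$ only up to an unknown multiplicative factor, namely the value of a convergent infinite product comparing the true coefficients to the model sequence; no expansion of the $\alpha_k$ recovers $\Gamma^2\left(\frac{a+1}2\right)/(\pi 2^{2-a})$. What is actually required, and what Szeg\"o's treatment of the Pollaczek polynomials supplies, is an \emph{exact} closed form: the three-term recurrence coefficients, leading coefficients and $L^2$-norms of the Pollaczek polynomials are explicit ratios of Gamma functions, and the prefactor comes from those exact formulas, not from coefficient asymptotics. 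In addition, the asymptotic $|\alpha_k|^2\sim a/k$ is asserted rather than derived (and, as you note, none of the standard Szeg\"o-class machinery is available since \eqref{S} fails), and the bookkeeping under $x=\cos\la$ --- circle degree $2n$ versus interval degree $n$, and whether $\varphi(\la)=\frac a2\cot\la$ corresponds to Pollaczek parameter $a$ or $a/2$ --- is precisely where the exponent $-a$ (rather than $-2a$ or $-a/2$) and the argument $\frac{a+1}2$ of the Gamma function are decided; the sketch leaves all of this unresolved. So the plan is viable and historically faithful, but the decisive computation is missing, and the final step must be reformulated around exact Pollaczek normalization constants rather than an expansion of the reflection coefficients.
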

In this paper, using an approach, different from the one applied in Rosenblatt
\cite{Ros}, we extend Theorems A and B to broader classes of
spectral densities.

Concerning Theorem A, we describe an extension of the asymptotic relation \eqref{nd2}
to the case of several arcs, without having to stipulate continuity of the
spectral density $f(\la)$.

As for the extension of Theorem B, we first prove that if the spectral
density $f(\la)$ is such that the sequence $\{\si_n(f)\}$ is weakly varying
(a term defined in Section \ref{pre}) and if, in addition, $g(\la)$ is the
spectral density of a nondeterministic process satisfying some conditions,
then the sequences $\{\si_n(fg)\}$ and $\{\si_n(f)\}$  have the same asymptotic
behavior as $n\to\f$, up to some positive multiplicative factor $G(g)$.
This allows us to derive the asymptotic behavior of $\{\si_n(fg)\}$ from that of $\{\si_n(f)\}$.

Using this result, we obtain the following extension of Theorem B:
if the spectral density $f(\la)$ has the form $f(\la)=f_a(\la)g(\la)$,
where $f_a(\la)$ is as in (\ref{nd4}) and $g(\la)$ is the spectral density
of a nondeterministic process, then $\de_n(f)=\si^2(f)\simeq n^{-a}$ as $n\to\f$.

The remainder of the paper is organized as follows.
In Section \ref{fpp} we present formulas for the finite prediction error $\si_n(f)$,
and state some preliminary results.
Section \ref{d1} is devoted to the extension of the Rosenblatt's first theorem
(Theorem A). In Section \ref{d2} we extend Rosenblatt's second theorem
(Theorem B).

\s{Formulas for the prediction error}
\label{fpp}

In this section we present formulas for the finite prediction error
$\si_{n}^2(f)$ and state some preliminary results, which will be used in the sequel.

Suppose we have observed the values $X(-n), \ldots, X(-1)$ of a centered,
real-valued stationary process $X(t)$ with spectral density $f(\la)$.
The {\it one-step ahead linear prediction problem}
in predicting a random variable $X(0)$ based on the observed values
$X(-n), \ldots, X(-1)$ involves finding constants
$\widehat c_k:=\widehat c_{k,n}$, $k=1,2,\ldots,n$,
that minimize the one-step ahead prediction error variance:
\bea
\label{OS1}
\si_{n}^2(f):
=\min_{\{c_k\}}\E\left|X(0) - \sum_{k=1}^{n}c_kX(-k)\right|^2
=\E\left|X(0) - \sum_{k=1}^{n}\widehat c_kX(-k)\right|^2,
\eea
where $\E[\cd]$ stands for the expectation operator.
Using Kolmogorov's isometric isomorphism $V:\, X(t)\leftrightarrow e^{it\la}$,
in view of (\ref{OS1}), for the prediction error $\sigma_n^2(f)$ we can write
\bea
\label{OS2}
\nonumber
\sigma_n^2(f)& =&
\min_{\{c_k\}}\inl\left|1 -\sum_{k=1}^{n}c_ke^{-ik\la}\right|^2 f(\la)d\la
=\min_{\{c_k\}}\inl\left|e^{in\la} -\sum_{k=1}^{n}c_ke^{i(n-k)\la}\right|^2f(\la)d\la\\
&=&\min_{\{q_n\in \mathcal{Q}_n\}} \inl\left|q_n(e^{i\la})\right|^2f(\la)d\la,
\eea
where
\beq
\label{Q_n}
\mathcal{Q}_n: =\left\{q_n:  q_n(z)=\sum_{k=0}^nc_kz^{n-k}, \, c_0=1\right\}
\eeq
stands for the set of monic polynomials of degree $n$, that is,
with coefficient of the leading term equal to 1.

Thus, the problem of finding $\sigma_n^2(f)$ becomes to the problem of finding
the solution of the minimum problem \eqref{OS2}, \eqref{Q_n}.

The polynomial $p_n(z):=p_n(z,f)$ which solves the minimum problem \eqref{OS2}, \eqref{Q_n}
is called the {\it optimal polynomial} for $f(\la)$ in the class $\mathcal{Q}_n$.
This minimum problem was solved by G. Szeg\"o (see, e.g., Grenander and Szeg\"o \cite{GS},
Section 2.2, p. 38) by showing that the optimal polynomial $p_n(z,f)$ exists,
is unique and can be expressed
in terms of {\it orthogonal polynomials}
$\I_n(z)= \I_n(z;f)$, $n\in \mathbb{Z}_+$, on the unit circle $\mathbb{T}$
with respect to the spectral density $f(\la)$.

The system of orthogonal polynomials:
\beq
\label{orp}
\{\I_n(z) = \I_n(z;f), \q z=e^{i\la}, \q n\in \mathbb{Z}_+\}
\eeq
is uniquely determined by the following two conditions:
\begin{itemize}
\item[(i)] \,$\I_n(z) = \kappa_n(f)z^n + {\rm lower \,\,  order \,\, terms}$

is a polynomial of degree $n$, in which the coefficient
$\kappa_n = \kappa_n(f)$ of $z^n$ is real and positive;

\item[(ii)] for arbitrary nonnegative integers $k$ and $j$
\beq
\label{op4}
\nonumber
\frac1{2\pi}\inl \I_k(z)\ol {\I_j(z)}f(\la)d\la =
\de_{kj} = \left \{
           \begin{array}{ll}
           1, & \mbox { for  $ k = j$}\\
           0, & \mbox { for  $k \ne j,$} \q
           \end{array}
           \right.   z = e^{i\la}.
\eeq
\end{itemize}
The next result by Szeg\"o, which solves the minimum problem \eqref{OS2}, \eqref{Q_n},
can be found in Szeg\"o \cite{Sz1}, p. 298 (see, also, Grenander and Szeg\"o \cite{GS}, p. 38).
\begin{pp}
\label{Sz2}
The optimal polynomial for $f(\la)$ in the class $\mathcal{Q}_n$, that is,
the polynomial $p_n(z):=p_n(z,f)$ which solves the minimum problem \eqref{OS2}, \eqref{Q_n}
is given by
\beq
\label{opp}
\nonumber
p_n(z)=\kappa^{-1}_n(f)\I_n(z),
\eeq
and the minimum in \eqref{OS2} is equal to $\kappa^{-2}_n(f)$, where $\I_n(z)$
is as in \eqref {orp} and $\kappa_n(f)$ is the leading coefficient of $\I_n(z)$.
\end{pp}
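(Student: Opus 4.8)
The plan is to exploit the fact that the orthogonal polynomials $\{\I_k(z;f)\}_{k=0}^n$ form an \emph{orthonormal basis} for the space $\mathcal{P}_n$ of polynomials of degree at most $n$, equipped with the inner product
$\langle p,q\rangle_f := \frac1{2\pi}\inl p(e^{i\la})\,\ol{q(e^{i\la})}\,f(\la)\,d\la$, and then to reduce the minimization \eqref{OS2} over the monic class $\mathcal{Q}_n$ to an elementary least-squares problem in this basis.

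First I would verify that $\{\I_0,\I_1,\ldots,\I_n\}$ is indeed a basis of the $(n+1)$-dimensional space $\mathcal{P}_n$. This is immediate from condition (i): since the leading coefficient $\kappa_k(f)$ is real and positive, each $\I_k$ has \emph{exact} degree $k$, so the $\I_k$ have distinct degrees, are therefore linearly independent, and being $n+1$ in number they span $\mathcal{P}_n$. Condition (ii) states precisely that they are orthonormal with respect to $\langle\cdot,\cdot\rangle_f$.

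Next, given any monic $q_n\in\mathcal{Q}_n$, I would expand it in this basis as $q_n=\sum_{k=0}^n a_k\I_k$ with unique coefficients $a_k\in\mathbb{C}$. The monic normalization fixes $a_n$: since only $\I_n$ contributes to the top degree $z^n$ (with coefficient $\kappa_n(f)$), comparing coefficients of $z^n$ gives $a_n\kappa_n(f)=1$, i.e. $a_n=\kappa_n^{-1}(f)$, while $a_0,\ldots,a_{n-1}$ remain free. By orthonormality (Parseval's identity) the objective functional becomes
\[
\langle q_n,q_n\rangle_f=\sum_{k=0}^n|a_k|^2 .
\]
As $|a_n|^2=\kappa_n^{-2}(f)$ is forced by the constraint, this sum is minimized precisely by taking $a_0=\cdots=a_{n-1}=0$, which yields the minimizer $p_n(z)=\kappa_n^{-1}(f)\I_n(z)$ and the minimal value $\kappa_n^{-2}(f)$, as claimed. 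Uniqueness is automatic, since the functional is a strictly convex (positive-definite) quadratic in the free coefficients $a_0,\ldots,a_{n-1}$.

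The argument is essentially complete once the basis property is in hand, so I do not expect a serious obstacle here; this is a classical computation. The only point genuinely requiring care is the bookkeeping of the normalizing factor $\frac1{2\pi}$: the functional in \eqref{OS2} must be read with the normalized measure $\frac{d\la}{2\pi}$ (consistently with the orthonormality (ii)) so that the minimum emerges as $\kappa_n^{-2}(f)$ rather than $2\pi\,\kappa_n^{-2}(f)$. With that convention pinned down, substituting the minimizer back confirms both assertions of the proposition.
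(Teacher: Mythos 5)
Your proof is correct and is precisely the classical argument; the paper itself does not prove this proposition but cites Szeg\"o \cite{Sz1} and Grenander--Szeg\"o \cite{GS}, where the same orthonormal-expansion computation appears. Your remark about the factor $2\pi$ is well taken: as written, \eqref{OS2} carries no $\frac{1}{2\pi}$ while the orthonormality in (ii) does, so with the paper's literal normalization the minimum would be $2\pi\,\kappa_n^{-2}(f)$; this is a harmless normalization mismatch in the paper (consistent with its statement $\lim_n\si_n^2(f)=2\pi G(f)$), and you resolve it the right way.
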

Thus, for the prediction error $\sigma_n^2(f)$ we have the following formula:
\bea
\label{mm1}
\si^2_n(f)=
\min_{\{q_n\in \mathcal{Q}_n\}}
\inl\left|q_n(e^{i\la})\right|^2f(\la)d\la=
\inl\left|p_n(e^{i\la},f)\right|^2f(\la)d\la=\kappa^{-2}_n(f).
\eea
\begin{rem}
\label{Rem22}
{\rm Define
\beq
\label{Q_n*}
\mathcal{Q}^*_n: =\left\{q_n:  q_n(z)=\sum_{k=0}^nc_kz^{n-k}, \, c_n=1\right\},
\eeq
and observe that the classes of polynomials $\mathcal{Q}_n$ and $\mathcal{Q}^*_n$
defined in \eqref{Q_n} and \eqref{Q_n*}, respectively, differ by normalization:
in \eqref{Q_n*} we have $ c_n=1$, while in \eqref{Q_n} we have $ c_0=1$.
Also, the optimal polynomials $p_n(z,f)$ and $p^*_n(z,f)$ for $f(\la)$ in the
classes $\mathcal{Q}_n$ and $\mathcal{Q}^*_n$ are related by the equality:
$p^*_n(z,f)=z^{n}\ol p_n(1/z,f)$, that is, $p^*_n(z,f)$ is the reciprocal polynomial
for $p_n(z,f)$, and, we have $||p^*_n||_2=||p_n||_2$.
Thus, for the prediction error $\si^2_n(f)$ we have the following formula in terms
of the optimal polynomial $p^*_n(z,f)$:
\bea
\label{mmm1}
\si^2_n(f)=
\min_{\{q_n\in \mathcal{Q}^*_n\}}
\inl\left|q_n(e^{i\la})\right|^2f(\la)d\la=
\inl\left|p^*_n(e^{i\la},f)\right|^2f(\la)d\la.
\eea}
\end{rem}
\begin{rem}
\label{rW}
{\rm Consider the mapping $W:(-\pi,\pi]\longleftrightarrow\mathbb{T}$,
given by formula
\beq
\label{W1}
W(\la)=e^{i\la}, \q \la\in (-\pi,\pi].
\eeq
The mapping $W$ establishes a one-to-one correspondence between the interval $(-\pi,\pi]$
and the unit circle $\mathbb{T}$, and allows us to pass from the interval $(-\pi,\pi]$ to the
unit circle $\mathbb{T}$ and vice versa. Also, observe that by means of this mapping the
spectral density $f$ and the Lebesgue measure $\mu$ on  $(-\pi,\pi]$ generate on the unit circle
$\mathbb{T}$ a function $f_\mathbb{T}$ and a measure $\mu_\mathbb{T}$, respectively as follows:
\beq
\label{W2}
f_\mathbb{T}(z)=f(\la), \, \,\, z=e^{i\la}, \,\, \la\in (-\pi,\pi]\q {\rm and} \q
\mu_\mathbb{T}(E)=\mu(W^{-1}(E)), \q E\subset\mathbb{T},
\eeq
where $W^{-1}(E)$ is the preimage of $E\subset\mathbb{T}$ under the mapping $W$:
\beq
\label{W-1}
W^{-1}(E):=\{\lambda\in (-\pi,\pi]: \, e^{i\lambda} \in E\}.
\eeq
The measure $\mu_\mathbb{T}(E)$ is called a {\it linear measure} of the set $E\subset\mathbb{T}$.
With this notation, in view of \eqref{mm1} and \eqref{W2}, for the prediction error $\si^2_n(f)$
we have the following formula:
\bea
\label{Wmm1}
\si^2_n(f)=
\int_\mathbb{T}\left|p_n(z,f)\right|^2f_\mathbb{T}(z)d\mu_\mathbb{T}, \q z=e^{i\la}.
\eea
}
\end{rem}
\begin{pp}
\label{pp3}
The prediction error $\sigma_n^2(f)$ possesses the following properties.
\begin{itemize}
\item[(a)]
The sequence $\{\sigma_n^2(f), \, n\in\mathbb{N}\}$ is non-increasing
in $n$: $\sigma_{n+1}^2(f)\leq\sigma_{n}^2(f)$.
\item[(b)]
$\sigma_n^2(f)$ is a non-decreasing functional of $f(\lambda)$:
\bea
\label{OSm2K}
\sigma_{n}^2(f_1)\leq\sigma_n^2(f_2) \q {\rm when}\q f_1(\lambda)\leq f_2(\lambda),
\q \lambda\in \Lambda=[-\pi,\pi].
\eea
\item[(c)]
If $f(\la)=g(\la)$ almost everywhere on $[-\pi,\pi]$, then
\beq
\label{2c0}
\si_n(f) = \si_n(g).
\eeq
Thus, the change of values of the spectral density $f(\la)$ on an arbitrary subset
of $[-\pi,\pi]$ of measure zero (in particular, on a subset consisting of a finite
or countable number of points) does not affect the value of $\si_n(f)$.
\end{itemize}
\end{pp}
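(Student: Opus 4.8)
The plan is to read off all three properties directly from the variational formula \eqref{mm1},
\[
\si^2_n(f)=\min_{\{q_n\in \mathcal{Q}_n\}}\inl\left|q_n(e^{i\la})\right|^2f(\la)\,d\la ,
\]
since each assertion simply describes how this minimum reacts to a specific change: enlarging the admissible class $\mathcal{Q}_n$, increasing the weight $f$, or altering $f$ on a null set. No orthogonal-polynomial machinery is needed beyond the existence of the optimal polynomial $p_n(\cdot,f)$ supplied by Proposition \ref{Sz2}.

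For part (b) I would fix an arbitrary $q_n\in\mathcal{Q}_n$; the hypothesis $f_1\le f_2$ on $\Lambda$ gives the pointwise inequality $\left|q_n(e^{i\la})\right|^2f_1(\la)\le\left|q_n(e^{i\la})\right|^2f_2(\la)$, and hence $\inl\left|q_n\right|^2f_1\le\inl\left|q_n\right|^2f_2$ for every admissible $q_n$. Applying this with $q_n$ equal to the optimal polynomial $p_n(\cdot,f_2)$ for $f_2$, which is in particular an admissible competitor in the minimum defining $\si^2_n(f_1)$, yields
\[
\si^2_n(f_1)\le\inl\left|p_n(e^{i\la},f_2)\right|^2f_1(\la)\,d\la\le\inl\left|p_n(e^{i\la},f_2)\right|^2f_2(\la)\,d\la=\si^2_n(f_2),
\]
which is \eqref{OSm2K}. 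Part (c) is even quicker: if $f=g$ outside a set of Lebesgue measure zero, then for each fixed $q_n$ the integrands $\left|q_n\right|^2f$ and $\left|q_n\right|^2g$ agree almost everywhere, so the two integrals coincide; the minimization problems defining $\si^2_n(f)$ and $\si^2_n(g)$ are thus identical term by term, giving \eqref{2c0}. One may alternatively obtain (c) from (b) applied in both directions.

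The only step requiring an actual idea is part (a), which I expect to be the main obstacle, albeit a mild one. Here I would exhibit a modulus-preserving embedding of $\mathcal{Q}_n$ into $\mathcal{Q}_{n+1}$ via the map $q_n(z)\mapsto z\,q_n(z)$: if $q_n$ is monic of degree $n$, then $z\,q_n(z)$ is monic of degree $n+1$, so $z\,q_n\in\mathcal{Q}_{n+1}$, and on the unit circle $\left|e^{i\la}q_n(e^{i\la})\right|=\left|q_n(e^{i\la})\right|$ because $\left|e^{i\la}\right|=1$. Using the optimal polynomial $p_n(\cdot,f)$ of degree $n$ as a competitor in degree $n+1$ then gives
\[
\si^2_{n+1}(f)=\min_{q_{n+1}\in\mathcal{Q}_{n+1}}\inl\left|q_{n+1}(e^{i\la})\right|^2f(\la)\,d\la\le\inl\left|e^{i\la}p_n(e^{i\la},f)\right|^2f(\la)\,d\la=\inl\left|p_n(e^{i\la},f)\right|^2f(\la)\,d\la=\si^2_n(f),
\]
which establishes the monotonicity in part (a). The whole proposition thereby reduces to these three one-line comparisons, the substantive content being the modulus-preserving shift $q_n\mapsto z\,q_n$ underlying (a).
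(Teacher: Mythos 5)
Your proof is correct, and parts (a) and (c) take routes that differ slightly from the paper's. For (a), the paper works with the reciprocal-normalized class $\mathcal{Q}_n^*$ of \eqref{Q_n*} (constant term equal to $1$) and simply invokes the literal inclusion $\mathcal{Q}_n^*\subset\mathcal{Q}_{n+1}^*$ together with \eqref{mmm1}; you stay in the monic class $\mathcal{Q}_n$ and use the modulus-preserving shift $q_n\mapsto zq_n$. The two devices are interchangeable (indeed $z\,q_n$ is exactly the monic counterpart of viewing $q_n^*$ as a degree-$(n+1)$ competitor), so this is a cosmetic difference. For (b) your argument is the paper's verbatim. For (c) your direct argument is actually cleaner than the paper's: you observe that for every fixed $q_n$ the objectives $\inl|q_n|^2f$ and $\inl|q_n|^2g$ coincide because the integrands agree a.e., so the two minima in \eqref{mm1} are equal; the paper instead introduces $h=\max\{f,g\}$, uses monotonicity \eqref{OSm2K} to get $\si_n^2(h)\geq\si_n^2(f)$, and then a competitor argument to get the reverse inequality, repeating this for $g$. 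One caveat: your closing remark that (c) "may alternatively be obtained from (b) applied in both directions" does not work as stated, since \eqref{OSm2K} requires the pointwise inequality $f_1\leq f_2$ on all of $\Lambda$, and $f=g$ a.e.\ gives neither $f\leq g$ nor $g\leq f$ everywhere; this is precisely the gap the paper's $\max\{f,g\}$ construction is designed to bridge. Your primary argument for (c) does not rely on this remark, so the proof stands.
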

\begin{proof}
The assertion (a) immediately follows from \eqref{mmm1} and the obvious embedding $\mathcal{Q}_n^*\subset\mathcal{Q}_{n+1}^*$.

To prove assertion (b), observe that by the definition of optimal polynomials
$p_n(z,f_1)$ and $p_n(z,f_2)$, corresponding to spectral densities $f_1$ and $f_2$,
respectively, we have
\beaa
\sigma_n^2(f_1)&=&\inl\left|p_n(e^{i\la},f_1)\right|^2 f_1(\la)d\la
\leq \inl\left|p_n(e^{i\la},f_2)\right|^2 f_1(\la)d\la\\
&\leq& \inl\left|p_n(e^{i\la},f_2)\right|^2 f_2(\la)d\la =\sigma_n^2(f_2),
\eeaa
and \eqref{OSm2K} follows.
In the last relation the first inequality follows from the optimality
of the polynomial $p_n(z,f_1)$, while the second inequality follows from
assumption that $f_1(\lambda)\leq f_2(\lambda)$, $\lambda\in \Lambda$.

To prove assertion (c), we split the segment $\Lambda=[-\pi,\pi]$ into
three subsets: $\Lambda=E\cup E_+\cup E_-$, where
$E:=\{\lambda\in \Lambda: \, f(\la)= g(\la)\}$,
$E_+:=\{\lambda\in \Lambda: \, f(\la)> g(\la)\}$, and
$E_-:=\{\lambda\in \Lambda: \, f(\la)< g(\la)\}$.
Taking into account that $f(\la)= g(\la)$ almost everywhere, we have
\beq
\label{2c1}
\mu(E_+)=\mu(E_-)=0.
\eeq
Define the function
\beq
\label{2c2}
h(\la): = \max\{f(\la), g(\la)\}=\left \{
\begin{array}{ll}
f(\la)\q\mbox{ if \, $\la \in E\cup E_+$}\\
g(\la)\q\mbox{ if \, $\la \in E_-$},\qq
           \end{array}
           \right.
\eeq
and observe that for all $\lambda\in \Lambda$
\beq
\label{2c3}
f(\la)\leq h(\la) \q {\rm and} \q g(\la)\leq h(\la).
\eeq
Besides, in view of \eqref{2c1}, for almost all $\lambda\in \Lambda$ we have
\beq
\label{2c4}
h(\la) = f(\la)= g(\la).
\eeq
We now show that
\beq
\label{2c5}
\si_n(h) = \si_n(f).
\eeq
Indeed, from the first inequality in \eqref{2c3} and \eqref{OSm2K}
it follows that $\si_n^2(h) \geq \si_n^2(f)$. On the other hand, we have
\beaa
\sigma_n^2(h)&=&\inl\left|p_n(e^{i\la},h)\right|^2 h(\la)d\la
\leq \inl\left|p_n(e^{i\la},f)\right|^2 h(\la)d\la\\
&=& \inl\left|p_n(e^{i\la},f)\right|^2 f(\la)d\la =\sigma_n^2(f).
\eeaa
In the last relation the first inequality follows from the optimality
of the polynomial $p_n(z,h)$, while the second equality follows from
\eqref{2c4}. Thus, \eqref{2c5} is proved.
Similar arguments can be applied to show that
\beq
\label{2c6}
\si_n(h) = \si_n(g).
\eeq
A combination of \eqref{2c5} and \eqref{2c6} yields \eqref{2c0}.
\end{proof}

\section{An extension of Rosenblatt's first theorem}
\label{d1}

In this section, using some results from geometric function theory,
we extend Rosenblatt's first theorem (Theorem A) to a broader class
of deterministic processes, possessing spectral densities that vanish on a set of
positive Lebesgue measure. More precisely, we extend the asymptotic relation
\eqref{nd2} to the case of several arcs, without having to stipulate continuity
of the spectral density $f(\la)$.
Besides, we obtain necessary as well as sufficient conditions for the exponential
decay of the prediction error $\si_{n}(f)$ as $n\to\f$.
Also, we calculate the transfinite diameter of some subsets of the unit circle,
and thus, obtain explicit asymptotic relations for the prediction error $\si_n(f)$
similar to the Rosenblatt's relation \eqref{nd2}.

To state the corresponding results we first introduce some metric characteristics
of compact (bounded closed) sets in the plane, such as, the transfinite diameter,
the Chebyshev constant and the capacity, and discuss some properties of these
characteristics.

\sn{Some metric characteristics of bounded closed sets in the plane}
\label{DCC}

One of the fundamental result of geometric complex analysis is the
classical theorem by Fekete and Szeg\"o, stating that for any compact set $F$
in the complex plane  $\mathbb{C}$ the transfinite diameter, the Chebyshev constant
and the capacity of $F$ {\em coincide}, although they are defined from very
different points of view. Namely, the transfinite diameter of the set $F$ characterizes
the asymptotic size of $F$, the Chebyshev constant of $F$ characterizes the
minimal uniform deviation of a monic polynomial on $F$, and the capacity of $F$
describes the asymptotic behavior of the Green function at infinity.
For the definitions and results stated in this subsection we refer the
reader to the following references: Fekete \cite{F}, Goluzin \cite{GMG}, Chapter 7,
Saff \cite{Saf}, Szeg\"o \cite{Sz1}, Chapter 16, and Tsuji \cite{Tsu}, Chapter III.

\vskip2mm
\n \underline{{\it Transfinite diameter.}}
Let $F$ be a compact (bounded closed) set in the complex plane $\mathbb{C}$.
Given a natural number $n\geq 2$ and points $z_1, \ldots, z_n \in F$, we define
\beq
\label{td1}
d_n(F):=\max_{z_1,\ldots,z_n\in F}\left[\prod_{1\le j<k
\le n}^n|z_j-z_k|\right]^{2/[n(n-1)]},
\eeq
which is the maximum of products of distances between the
$ \begin{pmatrix} n\\2\end{pmatrix}=n(n-1)/2$
pairs of points $z_k$, $k=1,\ldots,n$, as the points $z_k$ range over the set $F$.
Any system of points $\mathbb{F}_n:=\{z_{1n},\ldots z_{nn}\}$
for which the maximum in \eqref{td1} is attained is called an {\it $n$-point
Fekete set} for $F$, and the points $z_{kn}$ in $\mathbb{F}_n$ are called
{\it Fekete points} of $F$.\\
Note that $d_2(F)$ is the diameter of $F$, while $d_3(F)$ measures
its "spread" of $F$. The quantity $d_n(F)$ is called the $n$th
{\it transfinite diameter} of the set $F$.
It can be shown (see, e.g., Goluzin \cite{GMG}, Section 7.1, p. 294)
that $d_n(F)$ decreases and does not exceed the
diameter $d_2(F)$ of $F$, implying that $d_n(F)$ has a finite limit as $n\to\f$.
This limit, denoted by $d_\f(F)$, is called the {\it transfinite diameter} of
$F$. Thus,
\beq
\label{td2}
d_\f(F):=\lim_{n\to\f}d_n(F),
\eeq
where $d_n(F)$ is as in (\ref{td1}).
\vskip2mm
\n \underline{{\it Chebyshev constant.}}
For a bounded closed set $F$ in the complex plane $\mathbb{C}$, we put
\beq
\label{td4}
\nonumber
m_n(F): = \inf\max_{z\in F}|q_n(z)|,
\eeq
where the infimum is taken over all monic polynomials $q_n(z)$ from the
class $\mathcal{Q}_n$, where $\mathcal{Q}_n$ is as in \eqref{Q_n}.
Then there exists a unique monic polynomial $T_n(z,F)$ form the class $\mathcal{Q}_n$,
called the {\it Chebyshev polynomial} of $F$ of order $n$, such that
\beq
\label{td6}
m_n(F)=\max_{z\in F}|T_n(z,F)|.
\eeq
Fekete \cite{F} proved that $\lim_{n\to\f}(m_n(F))^{1/n}$ exists.
This limit, denoted by $\tau(F)$, is called the {\it Chebyshev constant}
for the compact set $F$. Thus,
\beq
\label{td7}
\tau(F): = \lim_{n\to\f}(m_n(F))^{1/n}.
\eeq

\vskip2mm
\n \underline{{\it Capacity}}.
Let $F$ be a closed bounded set in the complex plane  $\mathbb{C}$,
and let $D_F$ denote the complementary domain to $F$, containing $\f$
as an interior point. If the boundary $\G:=\partial D_F$ of the domain $D_F$
consists of a finite number of rectifiable Jordan curves, then for the domain $D_F$
can be constructed a Green function $G_{D_F}(z,\f)$ with a pole at infinity.
This function is harmonic everywhere in $D_F$, except at the point $z=\f$,
is continuous including the boundary $\G$ and vanishes on $\G$.
It is known that in a vicinity of the point $z=\f$ the function $G_{D_F}(z,\f)$ admits
the representation (see, e.g.,  Goluzin \cite{GMG}),  p. 309-310):
\beq
\label{tdG10}
G_F(z,\f)= \ln|z|+\g+ O(z^{-1}) \q {\rm as} \q z\to\f.
\eeq
The number $\g$ in \eqref{tdG10} is called the {\it Robin's constant} of the domain $D_F$,
and the number
\beq
\label{tdG11}
C(F):= e^{-\g}
\eeq
is called the  {\it capacity} (or the {\it logarithmic capacity}) of the set $F$.

\vskip2mm
Now we are in position to state the above mentioned fundamental result of
geometric complex analysis, due to M. Fekete and G. Szeg\"o
(see, e.g., Goluzin \cite{GMG}, Section 7.1, p. 197, Saff \cite{Saf}, and
Tsuji \cite{Tsu}, p. 73).

\begin{pp}[Fekete - Szeg\"o theorem]
\label{FS1}
For any compact set $F\subset \mathbb{C}$, the transfinite diameter $d_\f(F)$
defined by \eqref{td2}, the Chebyshev constant $\tau(F)$ defined by \eqref{td7},
and the capacity $C(F)$ defined by \eqref{tdG11} coincide, that is,
\beq
\label{td11}
d_\f(F)=C(F)=\tau(F).
\eeq
\end{pp}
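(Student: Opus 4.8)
\n\emph{Proof plan.} The strategy is to prove the circular chain of inequalities
\beq
\nonumber
C(F)\le \tau(F)\le d_\f(F)\le C(F),
\eeq
which forces the three quantities to coincide. The natural framework is logarithmic potential theory, so I would work throughout with the \emph{logarithmic energy} of a Borel probability measure $\nu$ supported on $F$,
\beq
\nonumber
I(\nu)=\iint \ln\frac1{|z-w|}\,d\nu(z)\,d\nu(w).
\eeq
The infimum of $I(\nu)$ over all such $\nu$ equals the Robin constant $\g$ appearing in \eqref{tdG10}, the infimum being attained by the equilibrium measure; recall that $C(F)=e^{-\g}$ by \eqref{tdG11}. (When $F$ has capacity zero one has $\g=+\f$ and the statement reduces to $d_\f(F)=C(F)=\tau(F)=0$, which I would dispose of separately.)

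First I would establish $\tau(F)\le d_\f(F)$ by a direct Fekete-point argument, requiring no potential theory. Let $z_1,\ldots,z_{n+1}$ be an $(n+1)$-point Fekete set for $F$. For each index $i$ the monic polynomial $\om_i(z)=\prod_{k\ne i}(z-z_k)\in\mathcal{Q}_n$ satisfies, by the extremal property defining \eqref{td1}, the identity $\max_{z\in F}|\om_i(z)|=\prod_{k\ne i}|z_i-z_k|$ (freezing the other $n$ points and moving $z_i$ over $F$ cannot increase the Fekete product). Hence $m_n(F)\le\prod_{k\ne i}|z_i-z_k|$ for every $i$, and multiplying these $n+1$ inequalities gives
\beq
\nonumber
m_n(F)^{n+1}\le\prod_{i=1}^{n+1}\prod_{k\ne i}|z_i-z_k|
=\left(\prod_{1\le j<k\le n+1}|z_j-z_k|\right)^2=d_{n+1}(F)^{\,n(n+1)},
\eeq
so that $m_n(F)^{1/n}\le d_{n+1}(F)$. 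Letting $n\to\f$ and invoking \eqref{td7} and \eqref{td2} yields $\tau(F)\le d_\f(F)$.

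The heart of the matter is the identity $d_\f(F)=C(F)$. I would rewrite $-\ln d_n(F)=\tfrac{2}{n(n-1)}\min_{z_j\in F}\sum_{1\le j<k\le n}\ln\frac1{|z_j-z_k|}$ as a minimal \emph{discrete} logarithmic energy and show it converges to the continuous minimum $\g$. The bound $\liminf_n\bigl(-\ln d_n(F)\bigr)\ge\g$ follows from weak-$*$ compactness of the empirical measures $\tfrac1n\sum_k\de_{z_{kn}}$ of the Fekete points, together with lower semicontinuity of $I(\cd)$ (the kernel being bounded below on the bounded set $F$). The reverse bound $\limsup_n\bigl(-\ln d_n(F)\bigr)\le\g$ is obtained by sampling $n$ points independently from the equilibrium measure and comparing the expected off-diagonal discrete energy with $I$ of the equilibrium measure, after a standard truncation of the kernel near the diagonal. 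This gives $d_\f(F)=e^{-\g}=C(F)$, and I expect this discrete-to-continuous passage, along with the measure-theoretic lower-semicontinuity estimate, to be the main obstacle.

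Finally I would close the loop with $C(F)\le\tau(F)$ via the maximum principle for the Green function. Consider $s(z):=\tfrac1n\ln|T_n(z,F)|-G_F(z,\f)$ on the complementary domain $D_F$; it is subharmonic there, and on the outer boundary $\G=\partial D_F$ one has $G_F=0$ and $|T_n|\le m_n(F)$ by \eqref{td6}, whence $s\le\tfrac1n\ln m_n(F)$ on $\G$. Since $T_n$ is monic of degree $n$, the expansion \eqref{tdG10} shows $s(z)\to-\g$ as $z\to\f$. The maximum principle then forces $-\g=s(\f)\le\tfrac1n\ln m_n(F)$, i.e.\ $m_n(F)^{1/n}\ge e^{-\g}=C(F)$, so $\tau(F)\ge C(F)$. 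Combining the three steps gives $C(F)\le\tau(F)\le d_\f(F)=C(F)$, establishing \eqref{td11}. For a fully general compact $F$ I would replace the smooth-boundary version of $G_F$ used above by the potential-theoretic capacity, so that the argument does not rely on the boundary $\G$ consisting of finitely many rectifiable Jordan curves.
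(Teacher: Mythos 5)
The paper does not prove this proposition at all: it is quoted as the classical Fekete--Szeg\"o theorem with pointers to Goluzin, Saff and Tsuji, so there is no internal argument to compare yours against. What you have written is, in outline, the standard textbook proof from exactly those references, and the outline is sound. Your first step ($\tau(F)\le d_\f(F)$ via the $n{+}1$ Fekete polynomials $\om_i$, giving $m_n(F)^{1/n}\le d_{n+1}(F)$) is complete and correct as written. The third step (the subharmonicity/maximum-principle argument giving $C(F)\le\tau(F)$) is also the standard one, though you correctly flag that for a general compact $F$ the boundary behavior of the Green function at irregular points needs the genuinely potential-theoretic formulation rather than the smooth-boundary expansion \eqref{tdG10} that the paper uses to \emph{define} $C(F)$; for full generality one must also reconcile that definition with the energy-minimization definition of $\g$, which you assume without comment. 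The real weight of the theorem sits in your second step, $d_\f(F)=C(F)$, which you only sketch: the direction $\limsup_n(-\ln d_n(F))\le\g$ is easy (off-diagonal terms integrate exactly to $I$ of the equilibrium measure, no truncation needed there), while the direction $\liminf_n(-\ln d_n(F))\ge\g$ via weak-$*$ limits of the Fekete empirical measures and lower semicontinuity of the truncated energy is where the work is, and it also quietly handles your capacity-zero case. None of this is wrong, but as submitted it is a plan rather than a proof; since the paper itself delegates the entire statement to the literature, the appropriate standard is whether your plan matches the cited proofs, and it does.
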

\begin{rem}
\label{br0}
{\rm It what follows, we will use the term "transfinite diameter" and the notation $\tau(F)$
for \eqref{td11}.}
\end{rem}
In only very few cases can the transfinite diameter
(and hence, the capacity and the Chebyshev constant) be exactly calculated.

In the next proposition we list a number of properties of the transfinite
diameter (and hence, of the capacity and the Chebyshev constant),
which will be used later.
\begin{pp}
\label{pp1}
The transfinite diameter (and hence, the capacity and the Chebyshev constant)
possesses the following properties.
\begin{itemize}
\item[(a)]
The transfinite diameter is monotone, that is, for any closed sets $F_1$ and $F_2$ with
$F_1\subset F_2$, we have $\tau(F_1)\leq \tau(F_2)$ (see, e.g., Saff \cite{Saf}, p. 169,
Tsuji \cite{Tsu}, p. 56).
\item[(b)]
If a set $F_1$ is obtained from a compact set $F\subset \mathbb{C}$ by
a linear transformation, that is, $F_1:=aF+b=\{az+b :\, z\in F\}$, then
$\tau(F_1)=|a|\tau(F)$. In particular, the transfinite diameter $\tau(F)$
is invariant with respect to parallel translation and rotation of $F$
(see, e.g., Goluzin \cite{GMG}, p. 298, Saff \cite{Saf}, p. 169,
Tsuji \cite{Tsu}, p. 56).
\item[(c)]
(Fekete theorem). Let $F$ be a bounded closed set in the complex $w$-plane $\mathbb{C}$,
and let $p(z):=p_n(z)=z^n+c_1z^{n-1}\cdots+c_n$ be an arbitrary monic
polynomial of degree $n$. Let $F^*$ be the preimage of $F$
in the $z$-plane under the mapping $w=p(z)$, that is, $F^*$ is the set of all
points $z\in\mathbb{C}$ such that $w:=p(z)\in F$. Then (see, e.g., Goluzin \cite{GMG}, p. 299,
Saff  \cite{Saf}, p. 186):
\beq
\label{F0}
\tau(F^*)=[\tau(F)]^{1/n}.
\eeq
\item[(d)]
The transfinite diameter of an arbitrary circle of radius $R$
is equal to its radius $R$. In particular, the transfinite diameter of the unit
circle $\mathbb{T}$ is equal to 1 (see, e.g., Tsuji \cite{Tsu}, p. 84).
\item[(e)]
The transfinite diameter of an arc $\G_\al$ of a circle of radius $R$
with central angle $\al$ is equal to $R\sin\frac\al 4$.
In particular, for the unit circle $\mathbb{T}$, we have
$\tau(\G_\al)=\sin\frac\al 4$ (see Tsuji \cite{Tsu}, p. 84).
\item[(f)]
The transfinite diameter of an arbitrary line segment $F$ is equal to one-fourth its length,
that is, if $F:=[a, b]$, then (see, e.g., Saff \cite{Saf}, p. 169, Tsuji \cite{Tsu}, p. 84):
\beq
\label{ts}
\tau(F) = \tau([a, b])=\frac{b-a}4.
\eeq

\end{itemize}
\end{pp}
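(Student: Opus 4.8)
The plan is to prove each item by choosing, among the three coincident quantities guaranteed by the Fekete--Szeg\"o theorem (Proposition \ref{FS1}), whichever admits the most transparent argument, and to work from the definitions \eqref{td1}, \eqref{td7}, \eqref{tdG11} together with the cited classical sources. Items (a) and (b) are cleanest through the finite transfinite diameters $d_n$; item (c) is cleanest through the Green function and capacity; and items (d)--(f) reduce, via (b) and (c), to a handful of explicit model computations.

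For (a) I would argue directly from \eqref{td1}: when $F_1\subset F_2$, every admissible configuration $z_1,\dots,z_n\in F_1$ is also admissible for $F_2$, so the maximum defining $d_n(F_2)$ is taken over a larger set, whence $d_n(F_1)\le d_n(F_2)$ for every $n$; passing to the limit \eqref{td2} gives $\tau(F_1)\le\tau(F_2)$. For (b), under $w=az+b$ each pairwise distance transforms as $|w_j-w_k|=|a|\,|z_j-z_k|$, so the product in \eqref{td1} over the $\binom{n}{2}$ pairs is multiplied by $|a|^{n(n-1)/2}$; since the normalizing exponent is exactly $2/[n(n-1)]$, this yields $d_n(aF+b)=|a|\,d_n(F)$, and the limit gives $\tau(aF+b)=|a|\,\tau(F)$. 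Translation ($a=1$) and rotation ($|a|=1$) are the special cases.

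For (c) I would use the Green function. With $p$ the monic polynomial of degree $n$ and $F^*$ its preimage, the function $\frac1n G_{D_F}(p(z),\f)$ is harmonic on $D_{F^*}$ (a harmonic function composed with an analytic one), vanishes on $\partial F^*$, and, since $p(z)\sim z^n$ as $z\to\f$, behaves like $\ln|z|+\gamma_F/n+O(z^{-1})$ near infinity by \eqref{tdG10}. By uniqueness of the Green function it therefore equals $G_{D_{F^*}}(z,\f)$, so the Robin constants satisfy $\gamma_{F^*}=\gamma_F/n$; by \eqref{tdG11} and Proposition \ref{FS1} this is exactly $\tau(F^*)=[\tau(F)]^{1/n}$. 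The delicate point here---the main obstacle of the whole proposition---is that one must ensure $F^*$ is regular enough for its Green function to exist and for the boundary-vanishing/uniqueness argument to apply, and that the singularity at infinity matches despite the critical points of $p$. A self-contained alternative giving one inequality is to note that, for the Chebyshev polynomial $T_m(\cd,F)$, the composition $T_m(p(z),F)$ is monic of degree $mn$ with $\max_{z\in F^*}|T_m(p(z),F)|=m_m(F)$, whence $m_{mn}(F^*)\le m_m(F)$; taking roots and letting $m\to\f$ gives $\tau(F^*)\le[\tau(F)]^{1/n}$.

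Finally, (d), (e) and (f) are explicit models. For (d), the $n$th roots of unity form a Fekete set for $\mathbb{T}$ and the Vandermonde product gives $\prod_{j<k}|z_j-z_k|=n^{n/2}$, so $d_n(\mathbb{T})=n^{1/(n-1)}\to1$; the radius-$R$ case then follows from (b). For (f), the rescaled classical Chebyshev polynomials show $m_n([-1,1])=2^{1-n}$, hence $\tau([-1,1])=1/2$, and the affine map of (b) upgrades this to $\tau([a,b])=(b-a)/4$. For (e), I would read the capacity off the explicit exterior conformal map of $\{|w|>1\}$ onto the complement of the arc $\G_\al$, whose leading Laurent coefficient at infinity equals $R\sin\frac\al4$; this is the one computation where I would lean directly on the construction in Tsuji \cite{Tsu}, since producing that map in closed form (rather than merely verifying the stated answer) is the genuinely laborious step. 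Once (e) is in hand the unit-circle value $\tau(\G_\al)=\sin\frac\al4$ is immediate, and by (b) this is precisely the metric quantity underlying the prediction-error asymptotics of the form \eqref{nd2}.
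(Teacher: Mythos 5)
The paper does not actually prove Proposition \ref{pp1}: it is presented as a catalogue of classical facts with pointers to Fekete, Goluzin, Saff and Tsuji, so there is no internal argument to compare yours against. That said, your sketches are the standard proofs from exactly those sources, and they are essentially correct. Items (a) and (b) via the finite diameters $d_n$ in \eqref{td1} are complete as written. For (c) your Green-function argument is the classical one, and you correctly identify the genuine delicacies (existence and boundary regularity of the Green function of $D_{F^*}$, and the identification of $p^{-1}(D_F)$ with the unbounded complementary component of $F^*$); your Chebyshev-composition fallback $m_{mn}(F^*)\le m_m(F)$ honestly delivers only one inequality, so as stated (c) is a sketch rather than a proof. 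In (d) you do not need the (true but nontrivial) claim that the roots of unity are a Fekete set: the Vandermonde computation already gives the lower bound $d_n(\mathbb{T})\ge n^{1/(n-1)}$, and the monic polynomial $z^n$ gives $m_n(\mathbb{T})\le 1$, hence $\tau(\mathbb{T})\le 1$ by \eqref{td7}; the two bounds squeeze to $1$ without identifying the extremal configuration. Item (f) via $m_n([-1,1])=2^{1-n}$ and item (e) via the explicit exterior conformal map of the complement of an arc are the standard routes, with (e) being the one place where, like the paper, you must ultimately lean on the computation in Tsuji. In short: correct in outline, with (c) and (e) deferred to the literature at the same points where the paper itself defers.
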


\sn{An extension of Rosenblatt's first theorem}
\label{ss3.2}
We are now in position to state the main results of this section.
In what follows, we use the following notation.
By $S_f^0$ we denote the set of zeros of the spectral density $f(\la)$, that is,
\beq
\label{S0}
S_f^0:=\{\la\in\Lambda: \,\, f(\la)= 0\}.
\eeq
By $S_f$ we denote the support of the spectral density $f(\la)$, that is,
\beq
\label{Spp}
S_f:=\{\la\in\Lambda: \,\, f(\la)> 0\}.
\eeq
By $E_f$ we denote the spectrum of the process $X(t)$, which is the image of
the support $S_f$ under the mapping $W$ (see \eqref{W1}), that is,
\beq
\label{Sp}
E_f:=W(S_f)=\{e^{i\la}: f(\la)> 0\},
\eeq
and by $\ol E_f$ we denote the closure of the set $E_f$.

Our first theorem extends Rosenblatt's first theorem (Theorem A).
More precisely, the result that follows extends the asymptotic
relation \eqref{nd2} to the case of several intervals (arcs), without having
to stipulate continuity of the spectral density $f(\la)$.

\begin{thm}
\label{BB1}
Let the support $S_f$ of the spectral density $f(\la)$ of the process $X(t)$
consist of a finite number of intervals of the segment $[-\pi,\pi]$.
Then the sequence $\{\sqrt[n]{\sigma_n(f)} \}$ converges, and
\beq
\label{bb2}
\lim_{n\to\f}\sqrt[n]{\sigma_n(f)}  =\tau(\ol E_f),
\eeq
where $\tau(\ol E_f)$ is the transfinite diameter of the closure of the spectrum $E_f$,
consisting of the corresponding finite number of closed arcs of the unit circle $\mathbb{T}$.
\end{thm}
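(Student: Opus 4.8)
The plan is to prove the two one-sided estimates $\limsup_{n\to\f}\sqrt[n]{\si_n(f)}\le\tau(\ol E_f)$ and $\liminf_{n\to\f}\sqrt[n]{\si_n(f)}\ge\tau(\ol E_f)$; together they give both the existence of the limit in \eqref{bb2} and its value. Throughout I would use the extremal representation $\si_n^2(f)=\min_{q_n\in\mathcal{Q}_n}\inl|q_n(e^{i\la})|^2f(\la)\,d\la$ from \eqref{mm1}, the identification of the transfinite diameter with the Chebyshev constant, $\tau(\ol E_f)=\lim_{n\to\f}(m_n(\ol E_f))^{1/n}$, furnished by \eqref{td7} and Proposition \ref{FS1}, and the submultiplicativity $m_{n+m}(F)\le m_n(F)m_m(F)$, which forces $m_n(F)\ge(\tau(F))^n$ for every $n$.

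For the upper bound I would test the minimum against the Chebyshev polynomial $T_n(z,\ol E_f)\in\mathcal{Q}_n$. Since $f$ vanishes off $S_f$ and $e^{i\la}\in\ol E_f$ whenever $\la\in S_f$, formula \eqref{mm1} gives $\si_n^2(f)\le\inl|T_n(e^{i\la},\ol E_f)|^2f(\la)\,d\la\le(m_n(\ol E_f))^2\,\|f\|_1$. Taking $2n$-th roots and letting $n\to\f$, the factor $\|f\|_1^{1/(2n)}\to1$ while $(m_n(\ol E_f))^{1/n}\to\tau(\ol E_f)$, so $\limsup_{n}\sqrt[n]{\si_n(f)}\le\tau(\ol E_f)$. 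This direction uses only $f\in L_1$ and makes no use of the structure of $S_f$.

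The lower bound is the heart of the argument and the only place where the hypothesis on $S_f$ enters. Fix $\e>0$, put $A_\e:=\{\la\in S_f:\ f(\la)\ge\e\}$, and let $\de:=\mu(\ol{S_f}\setminus A_\e)$; since $f>0$ a.e.\ on $S_f$ and $S_f$ is a finite union of intervals, $\de\to0$ as $\e\downarrow0$. Monotonicity of the prediction error in $f$ (Proposition \ref{pp3}(b)) together with the homogeneity $\si_n^2(cf)=c\,\si_n^2(f)$ evident from \eqref{mm1} give $\si_n^2(f)\ge\e\,\si_n^2(\1_{A_\e})$, so it suffices to bound $\si_n(\1_{A_\e})$ from below. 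For this I would convert a sup-norm lower bound into an $L_2$ bound. Writing $q_n$ for the optimal monic polynomial of $\1_{A_\e}$, the associated trigonometric polynomial $q_n(e^{i\la})$ has degree $n$, and $(\tau(\ol E_f))^n\le m_n(\ol E_f)\le\max_{z\in\ol E_f}|q_n(z)|\le C_1n^{b}\,\|q_n\|_{L_2(\ol{S_f})}\le C_1n^{b}\,e^{C_2 n\sqrt\de}\,\|q_n\|_{L_2(A_\e)}$, where the third inequality is a Nikolskii inequality on the finite union of arcs $\ol E_f$ and the last is a Remez-type $L_2$ inequality comparing the full set $\ol{S_f}$ with its subset $A_\e$. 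Since $\|q_n\|_{L_2(A_\e)}=\si_n(\1_{A_\e})$, this yields $\liminf_n\sqrt[n]{\si_n(\1_{A_\e})}\ge\tau(\ol E_f)\,e^{-C_2\sqrt\de}$, hence $\liminf_n\sqrt[n]{\si_n(f)}\ge\tau(\ol E_f)\,e^{-C_2\sqrt\de}$; letting $\e\downarrow0$ (so $\de\to0$) completes the proof.

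I expect the lower bound, and specifically the two polynomial inequalities in its central display, to be the main obstacle. The Nikolskii inequality $\max_{z\in\ol E_f}|q_n(z)|\le C_1n^{b}\|q_n\|_{L_2(\ol{S_f})}$ on a finite union of arcs must be established with a constant $C_1$ independent of $n$ and a polynomial power $n^b$, so that $(C_1n^b)^{1/n}\to1$; and the Remez-type comparison must degrade only subexponentially, with a rate that vanishes as the deleted measure $\de\to0$, so that $(e^{C_2 n\sqrt\de})^{1/n}=e^{C_2\sqrt\de}\to1$. Both are classical for a single interval but require care on several components. Alternatively, one may bypass them by invoking the regularity (in the sense of Stahl--Totik) of the orthogonality measure, which holds because $f>0$ a.e.\ on the union-of-intervals support $S_f$, and which yields $\liminf_n\sqrt[n]{\si_n(f)}\ge\tau(\ol E_f)$ directly. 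The upper bound and the measure-theoretic reductions are routine by comparison.
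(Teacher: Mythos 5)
Your upper bound is the paper's own argument (test the extremal problem against the Chebyshev polynomial of $\ol E_f$ and use $(m_n(\ol E_f))^{1/n}\to\tau(\ol E_f)$), but your lower bound takes a genuinely different route. The paper works directly with the optimal polynomial $p_n(z,\ol f)$: a Chebyshev--Markov estimate shows the set $E_n=\{z\in\ol E_f:|p_n(z,\ol f)|>n\si_n(\ol f)\}$ has measure $O(n^{-2})$, so on its complement $|p_n|\le n\si_n(\ol f)$ pointwise, and a single sup-norm comparison lemma of Mazurkiewicz ($\max_{\G}|q_n|\le(1+\e)^n\max_F|q_n|$ when $\mu(\G\setminus F)$ is small) then yields $m_n(\ol E_f)\le(1+\e)^n n\,\si_n(\ol f)$ with no truncation of $f$ and no Nikolskii step. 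You instead truncate $f$ below at level $\e$, reduce to $\si_n(\1_{A_\e})$, and pass from sup norm to $L_2$ via a Nikolskii inequality plus an $L_2$ Remez inequality on a finite union of arcs. Your scheme is logically sound and the reductions (monotonicity, homogeneity, $m_n\ge\tau^n$ from submultiplicativity, the limits in $\e$ and $\de$) are all correct; the Nikolskii step also reduces harmlessly to a single arc. But the $L_2$ Remez inequality with constant $e^{C_2 n\sqrt\de}$ for an arbitrary measurable subset $A_\e$ of several arcs is precisely the load-bearing step, and it is not classical in the form you need: the naive decomposition $\|q_n\|_{L_2(\ol S_f)}^2=\|q_n\|_{L_2(A_\e)}^2+\|q_n\|_{L_2(B)}^2$ combined with Nikolskii only gives a factor $(1-\de C_1^2n^{2b})^{-1}$, which is useless for large $n$, so you genuinely need an Erd\'elyi-type $L_p$ Remez theorem (transplanted from $[-1,1]$ to an arc and summed over components). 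That inequality plays exactly the role of Mazurkiewicz's lemma in the paper, so you have not avoided the hard external input, only replaced it with a less elementary one; the paper's route buys self-containedness (only a quoted sup-norm lemma plus elementary measure theory), while yours buys a statement that slots into standard polynomial-inequality machinery. Your fallback via Stahl--Totik regularity (the Erd\H{o}s--Tur\'an criterion applies since $f>0$ a.e.\ on a finite union of arcs, and regularity is by definition $\kappa_n^{1/n}\to1/\tau(\ol E_f)$, i.e.\ \eqref{bb2}) is a valid one-line proof, but it invokes a theorem considerably deeper than the result itself. As written, then, the proposal is a correct alternative outline whose central inequality is asserted rather than proved.
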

\begin{rem}
{\rm
A version of Theorem \ref{BB1} was first proved in Babayan \cite{Bb-1}
(see also Babayan \cite{Bb-2}).
Here we will give a simplified proof of this result.}
\end{rem}
\begin{rem}
\label{rm3.4}
{\rm
It can be shown that under some natural additional conditions,
the asymptotic relation \eqref{bb2} remains valid in the case where the support
$S_f$ of the spectral density $f(\la)$ consists of a countable number of intervals
of the segment $[-\pi,\pi]$ (see  Babayan \cite{Bb-1}).}
\end{rem}
\begin{rem}
\label{rm3.5}
{\rm
In Theorem A we have
$$\ol E_f:=\{e^{i\la}: \la\in [\pi/2-\alpha, \pi/2+\alpha]\},$$
which represents a closed arc of length $2\al$, and, according to Proposition \ref{pp1}(e),
we have $\tau(\ol E_f)=\sin(2\alpha/4)=\sin(\alpha/2)$.
Thus, the asymptotic relation \eqref{nd2} is a special case of \eqref{bb2}.}
\end{rem}

In what follows, we will need the following definition, which characterizes
the rate of variation of a sequence compared with a geometric progression.
\begin{den}
\label{ekd1}
(a) A sequence $\{a_n, \, n \in\mathbb{N}\}$ of nonnegative numbers is
said to be exponentially neutral if
\beq
\label{en}
\lim_{n \rightarrow \infty} \sqrt[n]{a_n} =1.
\eeq
(b) A sequence $\{b_n, \, n \in\mathbb{N}\}$ of nonnegative numbers
is said to be exponentially decreasing if
\begin{equation}
\label{em2}
\limsup_{n \rightarrow \infty} \sqrt[n]{b_n} <1.
\end{equation}
\end{den}
For instance, the sequence $\{a_n=n^\al, \, \al\in\mathbb{R}, \,
n \in\mathbb{N}\}$ is exponentially neutral because
$\log \sqrt[n]{n^\alpha} = \frac{\alpha}{n} \log {n}\rightarrow 0$ as $n\to\f$.
The geometric progression $\{b_n=q^n, \, 0<q<1, \, n \in\mathbb{N}\}$
is exponentially decreasing because
$\sqrt[n]{b_n} = q^{n/n} = q<1$.
The sequence $\{b_n=n^\al q^n, \, \al\in\mathbb{R}, \, 0<q<1, \,
n \in\mathbb{N}\}$ is also exponentially decreasing because
$\sqrt[n]{b_n} = n^{\alpha/n}q \rightarrow q<1.$
In fact, it can easily be shown that a sequence $\{b_n, \, n \in\mathbb{N}\}$
is exponentially decreasing, that is, \eqref{em2} is satisfied
if and only if there exists a number $q$ ($0<q<1$) such that
\beq
\label{ek4}
b_n =O(q^{n})  \q {\rm as}\q n\to\f.
\eeq
\begin{rem}
{\rm It is easy to see that an exponentially neutral sequence $\{a_n, \, n \in\mathbb{N}\}$
that converges to zero, does so slower than any exponentially decreasing sequence
$\{b_n, \, n \in\mathbb{N}\}$.
In particular, if $\{b_n=q^n, \, 0<q<1, \, n \in\mathbb{N}\}$ is a geometric progression,
then $b_n=o(a_n)$ as $n\to \f$.}
\end{rem}
\begin{rem}
\label{RMT1}
{\rm
It follows from relation \eqref{bb2} that if  $\tau_f:=\tau(\ol E_f)=1$,
then the sequence $\{\sigma_n(f)\}$ is exponentially neutral, and if $\tau_f<1$,
then it is exponentially decreasing.
Thus, Theorem \ref{BB1} shows that the question of exponential decay of the prediction error $\sigma_n(f)$ as $n\to\f$ in fact does not depend on the
values of the spectral density $f(\la)$ on its support $S_f$, and is
determined solely by the value of the transfinite diameter of the closure
of the spectrum $\ol E_f$.
Denote $\g_n:=\si_n(f)/\tau_f^n $. Then
\begin{equation}
\label{sfg}
\si_n(f)=\tau_f^n\cd \g_n,
\end{equation}
and in view of \eqref{bb2} we have
\beq
\label{eng}
\lim_{n \rightarrow \infty} \sqrt[n]{\g_n} =1.
\eeq
Thus, in the case where $\tau_f<1$, the prediction error $\sigma_n(f)$
is decomposed into a product of two factors, one of which ($\tau_f^n$)
is a geometric progression, and the second ($\g_n$) is an exponentially
neutral sequence.
Also, if $g(\la)$ is a spectral density satisfying the conditions of
Theorem \ref{BB1}, then in view of \eqref{sfg}, we have
\begin{equation}
\nonumber
\frac{\si_n(g)}{\si_n(f)}=\left(\frac{\tau_g}{\tau_f}\right)^n\cd\g'_n,
\end{equation}
where $\g'_n$ is an exponentially neutral sequence.}
\end{rem}
The following result contains a sufficient condition for the exponential decay of
$\si_{n}(f)$ as $n\to\f$.
\begin{thm}
\label{cor3.1}
If the spectral density $f(\lambda)$ of the process $X(t)$ vanishes on an interval,
then the prediction error $\si_{n}(f)$ decreases to zero exponentially.
More precisely, if $f(\lambda)$ vanishes on an interval $I_\de\subset [-\pi,\pi]$
of length $2\de$ $(0<\de<\pi)$, then
\begin{equation}
\label{m2}
\limsup_{n \rightarrow \infty} \sqrt[n]{\sigma_n(f)} \leqslant \cos\frac{\de}{2}<1.
\end{equation}
\end{thm}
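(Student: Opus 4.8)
The plan is to deduce this statement directly from Theorem \ref{BB1} together with the monotonicity property of the prediction error (Proposition \ref{pp3}(b)) and the explicit value of the transfinite diameter of an arc (Proposition \ref{pp1}(e)). The essential idea is that vanishing on an interval is, from the point of view of the $n$-th root asymptotics, a worst case that we can bound above by comparing $f$ with a simpler density whose support is the complement of that interval. First I would observe that since $f(\la)$ vanishes on $I_\de$, the support $S_f$ is contained in the complement $[-\pi,\pi]\sm I_\de$, so that $\ol E_f\subset \ol{W([-\pi,\pi]\sm I_\de)}$, which is a closed arc of the unit circle $\mathbb{T}$ of central angle $2\pi-2\de$.

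The difficulty is that $f$ need not satisfy the hypotheses of Theorem \ref{BB1} — its support may not consist of finitely many intervals — so I cannot apply \eqref{bb2} to $f$ itself. To circumvent this, I would introduce a comparison density $g(\la)$ that dominates $f$ and does satisfy those hypotheses. Concretely, take $g(\la):=M\cd\1_{[-\pi,\pi]\sm I_\de}(\la)$, where $M$ is an upper bound for $f$ (or, if $f$ is merely integrable, one can instead take $g$ to be $f$ itself truncated and note that only the support matters for the root asymptotics). The support of $g$ is the single interval $[-\pi,\pi]\sm I_\de$, so Theorem \ref{BB1} applies to $g$ and gives
\begin{equation}
\nonumber
\lim_{n\to\f}\sqrt[n]{\si_n(g)}=\tau(\ol E_g),
\end{equation}
where $\ol E_g$ is the closed arc of central angle $\al=2\pi-2\de$. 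By Proposition \ref{pp1}(e) this transfinite diameter equals $\sin\frac\al4=\sin\frac{2\pi-2\de}4=\sin\left(\frac\pi2-\frac\de2\right)=\cos\frac\de2$, which is strictly less than $1$ since $0<\de<\pi$.

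Finally, since $0\le f(\la)\le g(\la)$ for almost all $\la$, the monotonicity in Proposition \ref{pp3}(b) yields $\si_n^2(f)\le\si_n^2(g)$, hence $\si_n(f)\le\si_n(g)$ for every $n$. Taking $n$-th roots and passing to the limit superior gives
\begin{equation}
\nonumber
\limsup_{n\to\f}\sqrt[n]{\si_n(f)}\le\lim_{n\to\f}\sqrt[n]{\si_n(g)}=\cos\frac\de2<1,
\end{equation}
which is exactly \eqref{m2}, and the exponential decay of $\si_n(f)$ follows from Definition \ref{ekd1}(b). I expect the main point requiring care to be the choice of the dominating density $g$: one must ensure it is a genuine (integrable, essentially bounded) spectral density whose support is a finite union of intervals so that Theorem \ref{BB1} genuinely applies, while still dominating $f$ almost everywhere so that the monotonicity comparison is valid. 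Everything else is a routine combination of the cited results and the elementary trigonometric identity $\sin\left(\frac\pi2-\frac\de2\right)=\cos\frac\de2$.
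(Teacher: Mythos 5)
Your strategy is exactly the paper's: dominate $f$ by a density supported on the complementary arc $\G_\al=\mathbb{T}\setminus W(I_\de)$ of central angle $2\al=2\pi-2\de$, apply Theorem \ref{BB1} and Proposition \ref{pp1}(e) to get $\tau(\G_\al)=\sin\frac{2\pi-2\de}{4}=\cos\frac\de2$, and transfer the bound to $f$ via the monotonicity in Proposition \ref{pp3}(b). The one step that does not go through as written is your construction of the majorant: the theorem does not assume $f$ is bounded, so the constant $M$ need not exist, and your fallback of ``taking $g$ to be $f$ itself truncated'' produces a \emph{minorant} of $f$, which reverses the inequality $\si_n^2(f)\le\si_n^2(g)$ you need. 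The paper sidesteps this by defining the majorant as
\begin{equation}
\nonumber
\widehat f(\lambda):=
\begin{cases}
f(\lambda) & \text{if } e^{i\lambda}\in E_f,\\
1 & \text{if } e^{i\lambda}\in\G_\al\setminus E_f,\\
0 & \text{if } e^{i\lambda}\in W(I_\de),
\end{cases}
\end{equation}
which is integrable whenever $f$ is, satisfies $f\le\widehat f$ pointwise without any boundedness hypothesis, and has spectrum exactly the closed arc $\G_\al$, so Theorem \ref{BB1} applies to it. With that substitution (or with $g=f+\1_{[-\pi,\pi]\setminus I_\de}$, which works equally well), your argument coincides with the paper's proof.
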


The next result gives a necessary condition for the exponential decay of $\si_{n}(f)$ as $n\to\f$.
\begin{thm}
\label{cor3.2}
A necessary condition for the prediction error $\si_{n}(f)$ to tend to zero exponentially
is that the spectral density $f(\la)$ should vanish on a set of positive Lebesgue measure, that is, $\mu(S_f^0)>0$, where $S_f^0$ is as in \eqref{S0}.
\end{thm}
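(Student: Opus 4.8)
My plan is to prove the contrapositive: assuming $\mu(S_f^0)=0$, i.e. $f>0$ almost everywhere, I will show that $\si_n(f)$ does \emph{not} decrease exponentially, by establishing $\lim_{n\to\f}\sqrt[n]{\si_n(f)}=1$. One half is free and holds for every spectral density: by Proposition \ref{pp3}(a) the sequence $\{\si_n^2(f)\}$ is non-increasing, hence bounded by $\si_1^2(f)$, so $\limsup_{n\to\f}\sqrt[n]{\si_n(f)}\le\lim_{n\to\f}\si_1(f)^{1/n}=1$. Everything therefore rests on the matching lower bound $\liminf_{n\to\f}\sqrt[n]{\si_n(f)}\ge1$. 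Note first that, since $\mu(S_f^0)=0$, the support $S_f$ has full measure $2\pi$ and is thus dense in $[-\pi,\pi]$; consequently its image is dense in the circle and $\ol E_f=\mathbb{T}$, whose transfinite diameter equals $\tau(\mathbb{T})=1$ by Proposition \ref{pp1}(d).

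For the lower bound I will invoke the regularity theory of orthogonal polynomials. Recall from \eqref{mm1} that $\si_n^2(f)=\ka_n^{-2}(f)$, so that $\sqrt[n]{\si_n(f)}=\ka_n(f)^{-1/n}$ and the claim is equivalent to $\ka_n(f)^{1/n}\to1$. The measure $d\nu=f(\la)\,d\la$ on $\mathbb{T}$ has support $\ol E_f=\mathbb{T}$ and density positive almost everywhere; by the Erd\H{o}s--Tur\'an criterion (in the general form of Stahl--Totik) such a measure is \emph{regular}, meaning precisely that $\lim_{n\to\f}\ka_n(f)^{1/n}=1/\tau(\mathrm{supp}\,\nu)=1/\tau(\mathbb{T})=1$. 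Hence $\sqrt[n]{\si_n(f)}\to1$, so $\limsup_{n\to\f}\sqrt[n]{\si_n(f)}=1$ is \emph{not} $<1$, and $\{\si_n(f)\}$ fails to be exponentially decreasing in the sense of Definition \ref{ekd1}(b). This is the contrapositive of the theorem.

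The main obstacle is exactly this lower bound $\liminf_{n\to\f}\sqrt[n]{\si_n(f)}\ge1$, and it is essential that it use the almost-everywhere positivity of $f$ and not merely $\ol E_f=\mathbb{T}$. The opposite inequality $\limsup_{n\to\f}\sqrt[n]{\si_n(f)}\le\tau(\ol E_f)$ is the elementary half: comparing the optimal polynomial in \eqref{mm1} with the Chebyshev polynomial of $\ol E_f$ gives $\si_n^2(f)\le m_n(\ol E_f)^2\,\|f\|_1$, and $m_n(\ol E_f)^{1/n}\to\tau(\ol E_f)$; this is the same mechanism that underlies Theorems \ref{BB1} and \ref{cor3.1}. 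One is tempted to derive the lower bound by an inner approximation: choose $\de>0$ with $\mu(\{f\ge\de\})>2\pi-\eta$, pick a finite union of intervals $V\st\{f\ge\de\}$, use $\de\mathbf{1}_V\le f$ together with Proposition \ref{pp3}(b) and Theorem \ref{BB1} to get $\liminf_{n\to\f}\sqrt[n]{\si_n(f)}\ge\tau(\ol{W(V)})$, and let $\mu(V)\to2\pi$. This works when $f$ is, say, continuous, but it breaks down in general: one can construct $f>0$ a.e. for which $\mathrm{ess\,inf}_I f=0$ on \emph{every} subinterval $I$, so that $\{f\ge\de\}$ has empty interior for each $\de>0$ and contains no interval at all. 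For such weights the approximation provides no nontrivial $V$, and the conclusion genuinely requires the a.e.-positivity regularity result, which prevents the extremal polynomials from concentrating on the thin low-values of $f$ and decaying faster than $\tau(\ol E_f)^n$.
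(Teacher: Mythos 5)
Your argument is correct, but it routes the essential step through machinery external to the paper, whereas the paper stays inside its own toolbox. The paper argues by contradiction: since $f>0$ a.e., Proposition \ref{pp3}(c) lets one redefine $f$ on the null set $S_f^0$ (say, set it equal to $1$ there) without changing any $\si_n(f)$; the modified density has support equal to all of $[-\pi,\pi]$, a single interval, so Theorem \ref{BB1} applies directly and gives $\lim_{n\to\f}\sqrt[n]{\si_n(f)}=\tau(\mathbb{T})=1$, contradicting exponential decay. You instead prove the (equivalent) contrapositive and obtain the crucial lower bound $\liminf_{n\to\f}\sqrt[n]{\si_n(f)}\ge 1$ by citing the Erd\H{o}s--Tur\'an/Stahl--Totik regularity criterion for measures on $\mathbb{T}$ with a.e.\ positive density. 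That citation is accurate and does yield the theorem, but it is essentially the same statement as Theorem \ref{BB1} specialized to full support, so you are importing from outside a result the paper has already established. Relatedly, your closing discussion of why an inner approximation by the sets $\{f\ge\de\}$ fails is a fair critique of that particular strategy, but the difficulty it identifies does not arise on the paper's route: the proof of Theorem \ref{BB1} nowhere requires $f$ to be bounded below on its support (its lower-bound half uses only that $\ol f>0$ on $\ol S_f$, which gives absolute continuity of Lebesgue measure with respect to $\mu_{\ol f}$), so no minorization by $\de\mathbf{1}_V$ is needed. What your approach buys is brevity and a useful pointer to the general regularity theory; what the paper's buys is self-containedness, at the cost of the small extra observation that modifying $f$ on a null set upgrades ``support of full measure'' to ``support equal to the whole circle.''
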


\begin{rem}
\label{RMT3}
{\rm
Theorem \ref{cor3.2} shows that if the spectral density $f(\lambda)$
is almost everywhere positive, that is, $\mu(S_f^0)=0$
(in particular, if $S_f^0$ consists of a finite or countable number of points),
then it is impossible to obtain exponential decay of the prediction error $\si_{n}(f)$,
no matter how high the order of the zero of $f(\lambda)$ at the points of $S_f^0$.}
\end{rem}

\sn{Proof of the results of Section \ref{ss3.2}}

We prove here Theorems \ref{BB1} - \ref{cor3.2}.
In Lemma \ref{maz} below, and in what follows, we will use the following
notions and definitions.
A {\it continuum} is defined to be a continuous rectifiable Jordan curve in the complex plane $\mathbb{C}$.
Any subset $E$ of a continuum is called a {\it linear set} in $\mathbb{C}$.
The {\it linear measure} $\mu(E)$ of a linear set $E$ is defined to be the Lebesgue
measure generated by the length of an arc of a continuum (see also Remark \ref{rW}).

The following lemma, which is an immediate consequence of a result by Mazurkievicz
\cite{Maz}, will be used in the proof of Theorem \ref{BB1}.
\begin{lem}
\label{maz}
Let $\G$ be a bounded closed set consisting of a finite number of continua.
Then for any $\epsilon>0$ there is a number $\delta=\delta(\epsilon,\G)>0$
such that for any closed subset $F\subset\G$ and an arbitrary
polynomial $q_n(z)$ of degree $n$ the following inequality holds:
\begin{equation}
\label{3.13}
M_n:=\max_{x \in \G}|q_n(z)|\leq(1+\vs)^n\max_{z\in F}|q_n(z)|,
\end{equation}
provided that $\mu(\G\setminus F)<\delta$.
\end{lem}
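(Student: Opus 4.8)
The inequality \eqref{3.13} is a \emph{Remez-type estimate}: it asserts that deleting from $\G$ a closed set of small linear measure can diminish the maximum modulus of a degree-$n$ polynomial by at most the geometric factor $(1+\vs)^n$. The plan is to reduce first to a single continuum, then to settle the model case of a circular arc by the classical Remez inequality, and finally to transfer the estimate to an arbitrary rectifiable continuum. For the reduction, write $\G=\G_1\cup\cdots\cup\G_m$. Since $\max_{z\in\G}|q_n(z)|=\max_{1\le i\le m}\max_{z\in\G_i}|q_n(z)|$ and $\max_{z\in F\cap\G_i}|q_n(z)|\le\max_{z\in F}|q_n(z)|$, it suffices to produce for each $i$ a threshold $\delta_i>0$ such that $\max_{\G_i}|q_n|\le(1+\vs)^n\max_{F\cap\G_i}|q_n|$ whenever $\mu(\G_i\setminus F)<\delta_i$; then $\delta:=\min_i\delta_i$ works, provided $\delta$ is also chosen smaller than $\min_i\mu(\G_i)$ so that $F\cap\G_i\neq\emptyset$ and the right-hand maximum is over a nonempty compact set.

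For the model case I would parametrize a single arc by arc length and invoke Remez. On an interval of length $\ell$, if $|p|\le1$ on a subset whose complement has measure $s$, then the maximum of $|p|$ over the whole interval is at most $T_n\!\big(\tfrac{\ell+s}{\ell-s}\big)$, where $T_n$ is the Chebyshev polynomial. Because $T_n(1+x)=\cosh\!\big(n\,\mathrm{arccosh}(1+x)\big)\le e^{n\sqrt{2x}}$ for $x\ge0$, this bound has exactly the shape $(1+\vs(s))^n$ with $\vs(s)\to0$ as $s\to0$. When $\G$ is the unit circle $\mathbb{T}$ (the case relevant for the application), this is cleanest: one applies the trigonometric Remez inequality to the nonnegative trigonometric polynomial $|q_n(e^{i\la})|^2$ of degree $n$, whose supremum over $\mathbb{T}$ is controlled by its supremum over a large subset with the same $(1+\vs)^n$-type factor.

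The remaining, and hardest, step is the transfer to a general rectifiable continuum. The obstacle is that the restriction of $q_n$ to $\G_i$ is \emph{not} a polynomial (nor a trigonometric polynomial) in the arc-length parameter, so the classical inequalities do not apply verbatim. To transfer them I would map the unbounded complementary domain of $\G_i$ conformally onto the exterior of the unit disk; by the F. and M. Riesz theorem the boundary correspondence is absolutely continuous precisely because $\G_i$ is rectifiable, while $\tfrac1n\log|q_n|$ is subharmonic with the controlled growth $g_{\G_i}(\cdot,\f)$ at infinity. The main difficulty lies exactly here: one must control the \emph{conformal distortion}, ruling out that a set of small linear measure on $\G_i$ corresponds under the map to a set of large harmonic measure on $\mathbb{T}$, and one must cope with the degeneracy of harmonic measure at the extremal boundary point (a naive two-constants argument evaluated at the maximizing point $z_0$ is vacuous, since the harmonic measure of $\G\setminus F$ tends to $1$ as $z\to z_0\in\G\setminus F$).

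This quantitative control on arbitrary rectifiable curves is precisely the content supplied by Mazurkiewicz's theorem, which is why \eqref{3.13} is presented as an immediate consequence of it rather than proved from scratch. I expect the distortion/harmonic-measure estimate to be the genuine crux: in the application of Lemma \ref{maz}, where $\G$ is the unit circle or a finite union of its arcs, no conformal distortion occurs and the trigonometric Remez inequality of the model case already yields the desired bound directly.
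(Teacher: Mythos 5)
The paper offers no proof of this lemma at all: it is stated as an immediate consequence of the cited theorem of Mazurkiewicz, whose content for a single rectifiable continuum is essentially the inequality \eqref{3.13} itself. Your proposal correctly converges on that same citation for the genuinely hard step (the transfer from the model case to an arbitrary rectifiable continuum), so in substance you and the paper rest on the same external result; there is no gap relative to what the paper itself supplies. What you add is correct and worthwhile. The reduction from a finite union of continua to a single one is sound, including the precaution $\delta<\min_i\mu(\G_i)$ so that each $F\cap\G_i$ is nonempty (note $\mu(\G_i\setminus F)\le\mu(\G\setminus F)<\delta$, so the componentwise hypothesis is inherited). The Remez--Chebyshev computation is also right: $T_n(1+x)=\cosh\bigl(n\,\mathrm{arccosh}(1+x)\bigr)\le e^{n\sqrt{2x}}$ since $\cosh\sqrt{2x}\ge 1+x$, and with $x=2s/(\ell-s)$ this gives a factor $(1+\vs(s))^n$ with $\vs(s)\to 0$ as $s\to 0$, \emph{uniformly in $n$ and in the polynomial} --- exactly the uniformity the lemma demands of $\delta(\epsilon,\G)$. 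Your observation that in the paper's actual application $\G$ is the unit circle or a finite union of its arcs, where the trigonometric Remez inequality applied to the degree-$n$ nonnegative trigonometric polynomial $|q_n(e^{i\la})|^2$ settles the matter with no conformal distortion, is accurate and makes your sketch a self-contained proof in the only generality that Theorem \ref{BB1} uses. Your diagnosis of where a from-scratch proof for general rectifiable continua becomes delicate (controlling the harmonic measure of the removed set under the exterior conformal map, and the degeneracy of a naive two-constants argument at the maximizing boundary point) is apt; that quantitative control is precisely what Mazurkiewicz's theorem provides, and deferring to it is what the paper does as well.
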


In our proof of Theorem \ref{BB1} given below, the set $\G$ will be
either the unit circle $\mathbb{T}$ or the union of a finite number of
closed arcs of $\mathbb{T}$.

\begin{proof}[Proof of Theorem \ref{BB1}]
Define the spectral density:
\beq
\label{tt22}
\ol f(\lambda):= \left \{
\begin{array}{ll}
f(\lambda) & \mbox {if \, $e^{i\lambda}\in E_f$}\\
1 & \mbox {if \, $e^{i\lambda} \in \ol E_f\setminus E_f$}\\
0 & \mbox {if \, $e^{i\lambda} \notin \ol E_f$},
\end{array}
\right.
\eeq
and observe that the spectrum $E_{\ol f}$ of the process with spectral density
$\ol f(\lambda)$ is the closure of the spectrum $E_{f}$ corresponding to $f(\lambda)$
and consists of a finite number of closed arcs:
\beq
\label{SpC}
E_{\ol f}:=\{e^{i\la}: \ol f(\la)> 0\}=\ol E_{f}.
\eeq
Consider the mapping $W$ given by the formula \eqref{W1}
and observe that the functions $\ol f(\lambda)$ and $f(\lambda)$ differ only
on the set $W^{-1}\left(\ol E_{f}\setminus E_{f}\right)=\ol S_{f}\setminus S_{f}$,
which either is empty or consists of a finite number of points.
Hence, in view of Proposition \ref{pp3}(c), we have
\beq
\label{SiC}
\si_n(f)=\si_n(\ol f).
\eeq

We first prove the inequality
\begin{equation}
\label{m3.15}
\limsup_{n \rightarrow \infty} \sqrt[n]{\sigma_n(f)} \leq {\tau}(\ol E_f).
\end{equation}
Denote by $T_n(z, \ol E_f)$ the Chebyshev polynomial of order $n$ of the set
$\ol E_f$, and define (see \eqref{td6})
\beq
\label{mn}
m_n:=m_n(\ol E_f)=\max_{z\in \ol E_f}|T_n(z,\ol E_f)|.
\eeq
Then we can write
\begin{eqnarray}
\nonumber
\sigma_n^2(f) &=&\sigma_n^2(\ol f) = \int_{-\pi}^{\pi} |p_n(e^{i\lambda}, \ol f)|^2
\ol f(\lambda)d\lambda\\
\label{mn3}
&\leq& \int_{-\pi}^{\pi}|T_n(e^{i\lambda}, \ol E_f)|^2 \ol f(\lambda) d\lambda
\leq {m}_n^2(\ol E_f) \cdot \int_{-\pi}^{\pi} \ol f(\lambda)d\lambda.
\end{eqnarray}
The first relation in \eqref{mn3} follows from \eqref{SiC}, the second from
\eqref{mm1}, the third from the definition of optimal polynomial $p_n(z,\ol f)$,
and the fourth from \eqref{mn}.
From \eqref{mn3} we get
\begin{equation}
\label{3.155}
\sigma_n^2(f)\leq c\,{m}_n^2(\ol E_f),
\end{equation}
where $c:=\int_{-\pi}^{\pi}\ol f(\lambda)d\lambda$ is a positive constant.
Taking the root of order $2n$ in \eqref{3.155}, then passing to the limit
as $n\to\f$, in view of \eqref{td7}, \eqref{td11}, Proposition \ref{pp1}(a) and (d),
and the elementary relation $\lim_{n \rightarrow \infty} \sqrt[n]{c} =1$,
we obtain
\begin{equation}
\label{3.15}
\nonumber
\limsup_{n \rightarrow \infty} \sqrt[n]{\sigma_n(f)} \leq {\tau}(\ol E_f)
\leq {\tau}(\mathbb{T})= 1.
\end{equation}

Now we proceed to prove the inequality:
\begin{equation}
\label{3.16}
\liminf_{n \rightarrow \infty} \sqrt[n]{\sigma_n(f)} \geqslant {\tau}(\ol E_f).
\end{equation}
To this end, we consider a sequence of subsets $\{E_n,\, n\in\mathbb{N}\}$ of
$\ol E_f$, defined by
\beq
\label{3.17}
E_n:=\{z \in \ol E_f: \,|p_n(z, \ol f)| > n\sigma_n(\ol f)\},
\eeq
where $\ol f$ is as in \eqref{tt22}, and a measure $\mu_{\ol f}$ on the unit circle
$\mathbb{T}$ generated by the spectral density $\ol f$ as follows:
$$
\mu_{\ol f}(E)=\int_{W^{-1}(E)}\ol f(\lambda)d\lambda, \q E\subset \mathbb{T},
$$
where $W^{-1}(E)$ is as in \eqref{W-1}.
Then, in view of \eqref{Wmm1} and \eqref{3.17} we can write
\begin{eqnarray}
\label{km7}
\nonumber
\sigma_n^2(\ol f) = \int_{\ol E_f} |p_n(z, \ol f)|^2 d\mu_{\ol f}
\geq \int_{E_n} |p_n(z, \ol f)|^2 d\mu_{\ol f} > n^2 \sigma_n^2(\ol f) \mu_{\ol f}(E_n),
\end{eqnarray}
implying that $\mu_{\ol f}(E_n)<n^{-2}$ and
\beq
\label{3.18}
\lim_{n\to\f}\mu_{\ol f}(E_n)=0.
\eeq
Next, since the spectral density $\ol f(\la)$ in \eqref{tt22} is strictly
positive on $W^{-1}(\ol E_f)=\ol S_f$, the Lebesgue measure $\mu_\mathbb{T}$,
defined in \eqref{W2}, is absolutely continuous with respect to the measure
$\mu_{\ol f}$. Hence, taking into account that the measure $\mu_\mathbb{T}$
is also finite ($\mu_\mathbb{T}(\mathbb{T})=2\pi$), by \eqref{3.18}, we have
\begin{equation}
\label{3.19}
\lim_{n \rightarrow \infty} \mu(E_n)  = 0.
\end{equation}

\noindent
Define the sets $F_n: = \ol E_f \backslash E_n$, and observe that $F_n$
are closed subsets of the spectrum $\ol E_f$, and we have (see \eqref{3.17})
\begin{equation}
\label{3.20}
|p_n(z, \ol f)| \leqslant n\sigma_n(\ol f), \q z \in F_n.
\end{equation}
Given an arbitrary $\varepsilon > 0$ we choose $\de:=\de(\ol E_f , \varepsilon)$
according to Lemma \ref{maz} with $\G=\ol E_f$ and $F=F_n$.
Then, in view of \eqref{3.19}, for large enough $n$, we have
$$
\mu(\ol E_f\backslash F_n) = \mu(E_n) < \delta.
$$
Therefore, we can write
\[
{m}_n(\ol E_f) =\max_{z \in \ol E_f} | T_n(z, \ol E_f)| \leqslant \max_{z \in \ol E_f} |p_n(z,
\ol f)| \leq (1 + \varepsilon)^n \max_{z \in F_n} |p_n(z, \ol f)| \leqslant
(1 + \varepsilon)^nn\sigma_n(\ol f),
\]
Here the first and the second relations follow from the definition of
Chebyshev polynomial (see \eqref{td6}), the third from
the relation \eqref{3.13} with $\G=\ol E_f$ and $F=F_n$
(see Lemma \ref{maz}) and the fourth from the inequality \eqref{3.20}.

The last relation implies that
$$ \sigma_n(\ol f)\geqslant \frac{m_n(\ol E_f)}{n(1 + \varepsilon)^{n}}.$$
Taking the root of order $n$, and letting $n$ tend to infinity, in view of the
relation $\lim_{n \rightarrow \infty} \sqrt[n]{n} =1$ and the elementary inequality $(1 + \varepsilon)^{-1}>1 - \varepsilon$, we obtain
\begin{equation}
\label{m15}
\nonumber
\liminf_{n \rightarrow \infty} \sqrt[n]{\sigma_n(\ol f)}
\geqslant {\tau}(\ol E_f) (1 - \varepsilon).
\end{equation}
From the last inequality, taking into account the arbitrariness of $\varepsilon$
and formula \eqref{SiC}, we obtain \eqref{3.16}.
A combination of \eqref{m3.15} and \eqref{3.16} implies \eqref{bb2},
and thus completes the proof of Theorem \ref{BB1}.
\end{proof}

\begin{proof}[Proof of Theorem \ref{cor3.1}]
Denote by $\G_\de:=W(I_\de)$ the arc of the unit circle $\mathbb{T}$ which
is the image of the interval $I_\de$ under the mapping $W$ (see \eqref{W1}),
and let $\G_\al:=\mathbb{T}\setminus \G_\de$ be the complementary of $\G_\de$.
Then $\G_\al$ is a closed arc of the unit circle $\mathbb{T}$, which contains the spectrum
$E_f$ ($E_f\subset\G_\al$) and is of length $2\al$, where $\al=\pi-\de$.
Therefore, by Proposition \ref{pp1}(e) of the transfinite diameter, we have
\beq
\label{t220}
\tau(\G_\al)=\sin\left(\frac{2(\pi-\de)}{4}\right)
=\sin\left(\frac{\pi}{2}-\frac{\de}{2}\right)=\cos\frac{\de}{2}.
\eeq
Next, define the function
\beq
\nonumber
\widehat f(\lambda):= \left \{
\begin{array}{ll}
f(\lambda) & \mbox {if \, $e^{i\lambda}\in E_f$}\\
1 & \mbox {if \, $e^{i\lambda} \in \G_\al\setminus E_f$}\\
0 & \mbox {if \, $e^{i\lambda} \in \G_\de$},
\end{array}
\right.
\eeq
and observe that $E_{\widehat f}=\G_\al$, $f(\lambda)\leq\widehat  f(\lambda)$,
and $\sigma_n(f)\leq \sigma_n(\widehat  f)$ by \eqref{OSm2K}.
Therefore, in view of \eqref{bb2} and \eqref{t220}, we can write
\beq
\label{Gde}
\nonumber
\limsup_{n\to\f}\sqrt[n]{\sigma_n(f)}  \leq \lim_{n\to\f}\sqrt[n]{\sigma_n(\widehat f)}
= \tau(E_{\widehat f})=\tau(\G_\al)=\cos\frac{\de}{2},
\eeq
and the relation \eqref{m2} follows, completing the proof of  Theorem \ref{cor3.1}.
\end{proof}

\begin{proof}[Proof of Theorem \ref{cor3.2}]
We argue by contradiction. Since by assumption the prediction error $\si_{n}(f)$
decreases to zero exponentially as $n\to\f$, according to Definition\ref{ekd1}(b),
we have
\beq
\label{Gde1}
\limsup_{n \rightarrow \infty} \sqrt[n]{\si_{n}(f)} <1.
\eeq
Assume that the spectral density $f(\la)$ is almost everywhere positive on
$\Lambda=[-\pi,\pi]$, that is, $\mu(S_f^0)=0$, where $S_f^0$ is as in \eqref{S0}. Then, in view of Proposition \ref{pp3}(c), without loss of generality,
we can assume that $S_f=\Lambda$, and hence $\ol E_f=E_f=\mathbb{T}$.
Then according to relation \eqref{bb2} and Proposition \ref{pp1}(d),
it follows that
\beq
\nonumber
\lim_{n\to\f}\sqrt[n]{\sigma_n(f)}  =\tau(\ol E_f)=\tau(\mathbb{T})=1,
\eeq
which contradicts the inequality \eqref{Gde1}, completing the proof
of Theorem \ref{cor3.2}.
\end{proof}

\sn{Some consequences of Theorem \ref{BB1}}
\label{EX}
Motivated by Theorems A and \ref{BB1} and Remark \ref{rm3.5}, the following question
arises naturally: calculate the transfinite diameter $\tau(\ol E_f)$ of closure
of the spectrum $\ol E_f$ consisting of a union of several closed arcs of the unit
circle $\mathbb{T}$, and thus, obtain an explicit
asymptotic relation for the prediction error $\si_n(f)$ similar to the Rosenblatt's
relation \eqref{nd2}.
As it was mentioned in Section \ref{DCC}, the calculation of the transfinite diameter
(and hence, the capacity and the Chebyshev constant) is a challenging problem, and in
only very few cases has the transfinite diameter been exactly calculated (see Proposition \ref{pp1}).
One such example provides Theorem A, in which case the transfinite diameter
of closure of the spectrum  $\ol E_f:=\{e^{i\la}: \la\in [\pi/2-\alpha, \pi/2+\alpha]\}$
is $\sin(\alpha/2)$.
Observe that in \cite{Ros}, M. Rosenblatt calculated the capacity of $\ol E_f$.
Below we give some other examples, where we can explicitly calculate the Chebyshev constant
(and hence the transfinite diameter and the capacity) by using some properties of the transfinite diameter, stated in Proposition \ref{pp1},
and a result due to Robinson \cite{Rob} concerning the relation between the
transfinite diameters of related sets.

In \cite{Rob}, R. Robinson, extending Fekete theorem (see Proposition \ref{pp1}(c)),
proved the following important result about the transfinite diameters of related sets.
\begin{pp}[Robinson \cite{Rob}]
\label{RT2}
Let $F$ be a bounded closed subset of the complex plane $\mathbb{C}$ lying
on the unit circle $\mathbb{T}$ and symmetric with respect to real axis,
and let $F^x$ be the projection of $F$ onto the real axis. Then
\beq
\label{F10}
\tau(F^x)=[2\tau(F)]^{1/2}.
\eeq
\end{pp}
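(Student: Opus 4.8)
The plan is to establish the identity in the form $\tau(F)^2 = 2\,\tau(F^x)$, i.e. $\tau(F)=[2\tau(F^x)]^{1/2}$, which is the orientation forced by the paper's own computations: for a symmetric arc of angular length $2\alpha$ Proposition \ref{pp1}(e) gives $\tau(F)=\sin(\alpha/2)$, while the projection is the segment $[\cos\alpha,1]$, for which Proposition \ref{pp1}(f) gives $\tau(F^x)=\tfrac14(1-\cos\alpha)=\tfrac12\sin^2(\alpha/2)$; and in the limit $F=\mathbb{T}$, $F^x=[-1,1]$ one has $\tau(\mathbb{T})=1=[2\cdot\tfrac12]^{1/2}$. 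The engine is the Joukowski correspondence $w=J(z):=\tfrac12(z+z^{-1})$, which sends $z=e^{i\theta}\in\mathbb{T}$ to $w=\cos\theta=\mathrm{Re}\,z$, the projection onto the real axis, and maps $F$ two-to-one onto $F^x$. Here the hypotheses $F\subset\mathbb{T}$ and the symmetry about the real axis are used: $z\in F\Rightarrow\bar z=1/z\in F$, so both preimages of each $w\in F^x$ lie in $F$.

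The easy inequality $\tau(F)^2\le 2\tau(F^x)$ I would get from the Chebyshev description \eqref{td7}. Let $t_n(w)$ be the monic Chebyshev polynomial of $F^x$, so $\max_{w\in F^x}|t_n(w)|=m_n(F^x)$ and $m_n(F^x)^{1/n}\to\tau(F^x)$. Put $P(z):=2^n z^n\,t_n\!\big(\tfrac12(z+z^{-1})\big)$. Multiplying by $z^n$ clears all negative powers, so $P$ is a genuine polynomial of degree $2n$ with leading coefficient $2^n\cdot 2^{-n}=1$, hence monic. For $z=e^{i\theta}\in F$ we have $|z^n|=1$ and $\tfrac12(z+z^{-1})=\cos\theta\in F^x$, so $|P(z)|=2^n|t_n(\cos\theta)|\le 2^n m_n(F^x)$. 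Thus $m_{2n}(F)\le\max_{z\in F}|P(z)|\le 2^n m_n(F^x)$; taking $2n$-th roots and letting $n\to\infty$ yields $\tau(F)\le\sqrt2\,\tau(F^x)^{1/2}$.

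The reverse inequality $\tau(F)^2\ge 2\tau(F^x)$ I would prove through the Fekete description \eqref{td1}--\eqref{td2}, by comparing Vandermonde products. The key pointwise identity, coming from $w_j-w_k=\tfrac12(a_j-a_k)\big(1-(a_ja_k)^{-1}\big)$ with $a_l=e^{i\theta_l}$, $b_l=e^{-i\theta_l}$ and $|a_l|=1$, is $|w_j-w_k|=\tfrac12\,|a_j-a_k|\,|a_j-b_k|$ for $w_l=\cos\theta_l$. Lifting $n$ points $w_1,\dots,w_n\in F^x$ to the $2n$ symmetric points $a_1,\dots,a_n,b_1,\dots,b_n\in F$ and sorting the pairwise distances into three types (both in $\{a_l\}$, both in $\{b_l\}$, and mixed), the conjugation symmetries $|b_j-b_k|=|a_j-a_k|$ and $|a_j-b_k|=|a_k-b_j|$ collapse the full product to
\[
\prod_{1\le p<q\le 2n}|\zeta_p-\zeta_q|=S_n\cdot 2^{\,n(n-1)}\Big(\prod_{1\le j<k\le n}|w_j-w_k|\Big)^{2},\qquad S_n:=\prod_{l=1}^{n}2\sin\theta_l .
\]
Choosing the $w_l$ to be (near-)Fekete points of $F^x$ makes the bracketed product equal to $d_n(F^x)^{n(n-1)/2}$, while the left-hand side is at most $d_{2n}(F)^{\,n(2n-1)}$ since it is one admissible configuration on $F$. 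Taking the $n(n-1)$-th root and letting $n\to\infty$, the factor $d_n(F^x)$ tends to $\tau(F^x)$ and $d_{2n}(F)^{(2n-1)/(n-1)}$ to $\tau(F)^2$ by \eqref{td2}, the explicit factor $2$ survives, and one obtains $2\tau(F^x)\le\tau(F)^2$ provided $S_n^{1/n^2}\to1$. Together with the first inequality this gives \eqref{F10}.

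The main obstacle is precisely this control of the boundary factor $S_n=\prod_l 2\sin\theta_l$ in the root limit. One side is free, since $S_n\le 2^n$ gives $S_n^{1/n^2}\le 2^{1/n}\to1$; the difficulty is the lower bound $S_n^{1/n^2}\to1$, equivalently $\tfrac1{n^2}\sum_l\log\sin\theta_l\to0$, which would fail if the Fekete points $w_l=\cos\theta_l$ accumulated too rapidly at the endpoints $\pm1$, where $\sin\theta_l\to0$. Two issues must be handled: (i) Fekete points of an interval actually sit \emph{at} its endpoints, so the naive lift produces coincident points at $\pm1$ and makes $S_n=0$; this is repaired by using configurations perturbed a distance $\varepsilon_n\downarrow0$ into the interior, which changes the $n$-th-root asymptotics only by $o(1)$; and (ii) one needs that no point is closer than a fixed inverse power of $n$ to $\pm1$, so that $\sum_l|\log\sin\theta_l|=O(n\log n)=o(n^2)$. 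The latter is the standard separation estimate for Fekete points of a finite union of intervals, equivalently the weak-$*$ convergence of the normalized Fekete counting measures to the equilibrium measure of $F^x$, whose density has only integrable inverse-square-root endpoint singularities. This potential-theoretic input is the one nontrivial ingredient, and it is exactly where I would invoke the classical machinery behind Robinson \cite{Rob}; everything else reduces to bookkeeping with Vandermonde products and $n$-th roots.
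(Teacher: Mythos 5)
The paper offers no proof of Proposition \ref{RT2}: it is quoted from Robinson \cite{Rob}, so your argument can only be measured against that citation. You are right to prove the identity in the orientation $\tau(F)=[2\tau(F^x)]^{1/2}$ rather than as displayed: as printed, \eqref{F10} fails already for $F=\mathbb{T}$, where $\tau(F)=1$ while $\tau(F^x)=\tau([-1,1])=1/2$ by \eqref{ts}, and it is the form $\tau(F)=[2\tau(F^x)]^{1/2}$ that the paper actually applies in Examples \ref{ex1} and \ref{ex3}; the display has $F$ and $F^x$ interchanged. Your inequality $\tau(F)^2\le 2\tau(F^x)$ is complete and correct: $P(z)=2^nz^nt_n\bigl(\tfrac12(z+z^{-1})\bigr)$ is indeed a monic polynomial of degree $2n$ with $|P|\le 2^n m_n(F^x)$ on $F$, and the conclusion follows from \eqref{td7} and the identification \eqref{td11}; this half uses neither the symmetry of $F$ nor any potential theory. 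The Vandermonde identity behind the converse is also verified correctly, including the factor $2^{n(n-1)}$ and the diagonal contribution $S_n=\prod_{l}2\sin\theta_l$.

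The genuine gap is the one you name yourself: the lower bound stands or falls with $S_n^{1/n^2}\to1$, i.e.\ with the assertion that Fekete points of $F^x$ do not accumulate at $\pm1$ faster than polynomially in $n$. Nothing in the paper's toolbox (Proposition \ref{pp1}) supplies this; it is a separation/equidistribution estimate equivalent in strength to the convergence of the Fekete counting measures to the equilibrium measure, so invoking it relocates the whole difficulty into exactly the classical potential theory that Robinson's proof rests on. The issue is not cosmetic: when $\pm1\in F^x$ the exact Fekete configuration lifts to a degenerate one with $S_n=0$, so some real argument is unavoidable. A patch closer to elementary facts is to run your computation on $F^x_\e:=F^x\cap\{|w|\le 1-\e\}$, where $S_n\ge(2\sqrt{\e}\,)^n$ trivially and hence $S_n^{1/n^2}\to1$, yielding $2\tau(F^x_\e)\le\tau(F)^2$ for every $\e>0$; but passing to $\e\downarrow0$ then needs the inner continuity of capacity (plus the fact that deleting the two points $\pm1$ does not change it), which is again external input. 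Robinson's own route is different in kind: he works with the capacity characterization \eqref{tdG10}--\eqref{tdG11}, transporting the Green function of the complement of $F$ through $w=\tfrac12(z+z^{-1})$ and comparing Robin constants, which delivers both inequalities at once and avoids Fekete points entirely. In sum: one direction of your argument is sound and self-contained, and you correctly repair the misprinted statement, but the converse inequality is not proved --- it is reduced to an unproved (if standard and true) equidistribution estimate.
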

\begin{rem}
\label{ExR}
{\rm The examples given below show that the formula \eqref{F10} gives a
simple way to calculate the transfinite diameters of some
subsets of the circle, based only on the formula \eqref{ts}
of the transfinite diameter of a line segment.}
\end{rem}

We now give examples of calculation of transfinite diameters of some
subsets of the unit circle, using formulas \eqref{ts} and \eqref{F10},
and some properties of the transfinite diameter listed in Proposition \ref{pp1}.

In the examples that follow we will use the following notation:
given $0<\be<2\pi$ and $z_0=e^{i\theta_0}$, $\theta_0\in(-\pi,\pi]$,
we denote by $\G_\be(\theta_0)$
an arc of the unit circle of length $\be$ which is symmetric
with respect to the point $z_0=e^{i\theta_0}$, that is,
\beq
\label{arc}
\G_\be(\theta_0):=\{e^{i\theta}: \, |\theta-\theta_0|\leq\be/2\}
=\{e^{i\theta}: \, \theta\in[\theta_0-\be/2, \theta_0+\be/2]\}.
\eeq
\begin{exa}
\label{ex1}
{\rm Let $\G_{2\al}:=\G_{2\al}(0)$. Then the projection $\G_{2\al}^x$
of $\G_{2\al}$ onto the real axis is the segment $[\cos\al,1]$ (see Figure 1a)),
and by \eqref{ts} for the transfinite diameter $\tau(\G_{2\al}^x)$ we have
$$\tau(\G_{2\al}^x) =\frac{1-\cos\al}4=\frac{\sin^2(\al/2)}2.$$
Hence, according to formula \eqref{F10}, we obtain
\beq
\label{F17}
\tau(\G_{2\al}) =[2\tau(\G_{2\al}^x)]^{1/2}=\left[2\frac{\sin^2(\al/2)}2\right]^{1/2} =\sin\frac\al2.
\eeq
Taking into account that the transfinite diameter is invariant with respect
to rotation (see Proposition \ref{pp1}(b)), from \eqref{F17} for any
$\theta_0\in(-\pi,\pi]$ we have
\beq
\label{F17a}
\tau(\G_{2\al}(\theta_0)) = \sin\frac\al2.
\eeq}
\end{exa}
\begin{center}
\includegraphics[height=55mm]{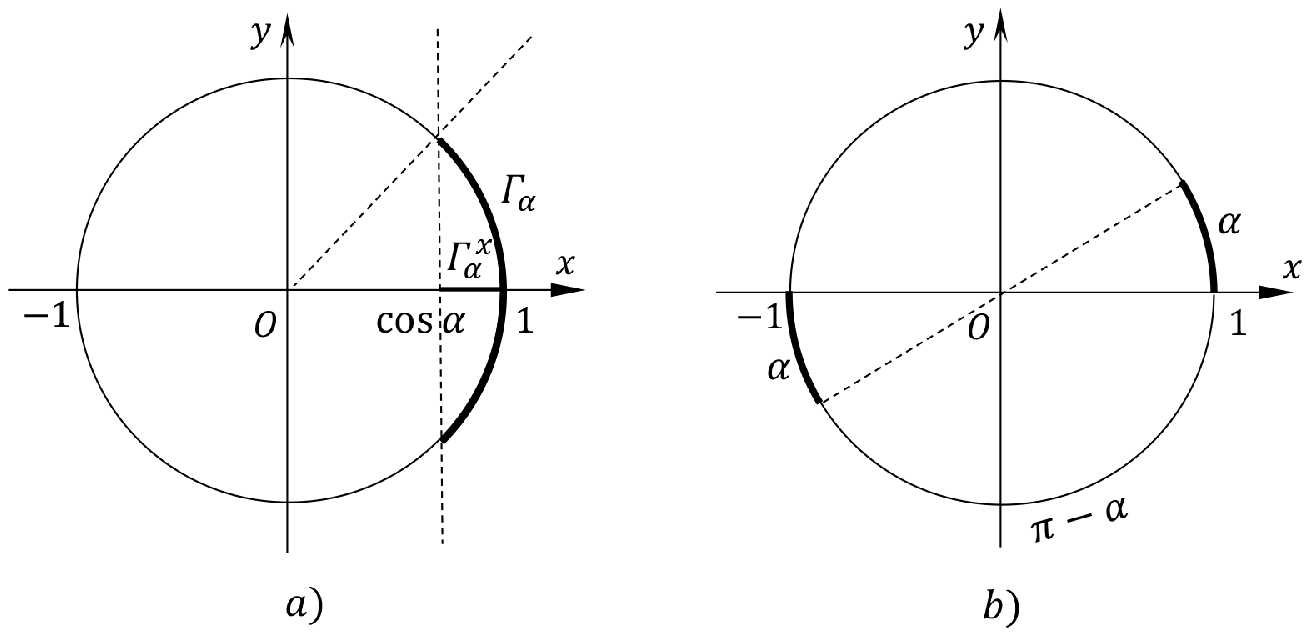}
\end{center}
\centerline{Figure 1. a) The set $\G_\al$. b) The set $\G(k)$ with $k=2$.}

\medskip
\noindent
Notice that formula in \eqref{F17} was first obtained by M. Rosenblatt
in \cite{Ros}, where he calculated the capacity of the arc $\G_{2\al}(\pi/2)$
by using the complex technique of conformal mappings and orthogonal
polynomials on the unit circle.
\begin{exa}
\label{ex2}
{\rm Let $\G_{2\al}(\al)$ be an arc of length $2\al$, defined by \eqref{arc}:
\beq
\label{FF1}
\nonumber
\G_{2\al}(\al)=\{e^{i\theta}: \, \theta\in[0,2\al]\},
\eeq
and let $\G(2)$ be the preimage of the arc $\G_{2\al}(\al)$ under the mapping $p(z)=z^2$. We show that the set $\G(2)$ is the union of two closed arcs of equal lengths $\al$, symmetrically located with respect to the center of the unit circle (see Figure 1b):
\beq
\label{FF2}
\G(2)=\{e^{i\om}: \, \om\in[-\pi,-\pi+\al]\cup [0,\al]\}.
\eeq
Indeed, the preimage $z=e^{i\om}$ of an arbitrary point $e^{i\theta}\in\G_{2\al}(\al)$, $\theta\in[0,2\al]$, under the mapping $p(z)=z^2$ satisfies the equality $z^2=e^{2i\om}=e^{i\theta}$.
This, in view of the $2\pi$-periodicity of $e^{i\theta}$ implies that
$2\om=\theta-2\pi k$, $k\in\mathbb{Z}$, and hence
\beq
\label{FF3}
\om=\om(k)=\frac\theta2-\pi k, \q k\in\mathbb{Z}.
\eeq
Again using the $2\pi$-periodicity of $e^{i\om}$, we conclude that
from the countable set of values of $\om(k)$ in \eqref{FF3} only two values
$\om(0)=\theta/2$ and $\om(1)=\theta/2-\pi$ correspond to distinct
preimages $z=e^{i\om}$ of the point $e^{i\theta}\in\G_{2\al}(\al)$, $\theta\in[0,2\al]$. Thus, each point $e^{i\theta}\in\G_{2\al}(\al)$
has two  distinct preimages $z_1=e^{i\om(0)}=e^{i\theta/2}$ and
$z_2=e^{i\om(1)}=e^{i(\theta/2-\pi)}$. Therefore, for the entire preimage
$\G(2)$ we have
\bea
\nonumber
\G(2)&=&\{e^{i\theta/2}: \, \theta\in[0,2\al]\}\cup
\{e^{i(\theta/2-\pi)}: \, \theta\in [0, 2\al]\}\\
\nonumber
&=&\{e^{i\psi}: \, \psi\in[0,\al]\} \cup \{e^{i\xi}: \, \xi\in [-\pi, -\pi+\al]\}\\
\nonumber
&=&\{e^{i\om}: \, \om\in[-\pi, -\pi+\al]\cup [0,\al]\},
\eea
and \eqref{FF2} follows.
Then, by Fekete theorem (see Proposition \ref{pp1}(c)) and formula \eqref{F17a},
for the transfinite diameter $\tau(\G(2))$ we have
\beq
\label{F188}
\nonumber
\tau(\G(2)) =[\tau(\G_{2\al}(\al))]^{1/2}=\left(\sin\frac{\al}2\right)^{1/2}.
\eeq
The above result can easily be extended to the case of $k$ ($k>2$) arcs.
Let $\G(k)$ be the union of $k$ ($k\in\mathbb{N},\, k\geq2$)
closed arcs of equal lengths $\al$, which are symmetrically located on the
unit circle (the arcs are assumed to be equidistant).
Arguments similar to those applied above can be used to show that the set $\G(k)$
is the preimage (to within rotation) under the mapping $p(z)=z^k$ of the arc $\G_{k\al}(k\al/2)$ of length $k\al$ defined by \eqref{arc}.
Therefore, by Fekete theorem (see Proposition \ref{pp1}(c)) and the invariance
property of the transfinite diameter with respect to rotation (see Proposition \ref{pp1}(b)),
for the transfinite diameter $\tau(\G(k))$, we have
\beq
\label{F18}
\tau(\G(k)) =\left(\sin\frac{k\al}4\right)^{1/k}.
\eeq}
\end{exa}
\begin{exa}
\label{ex3}
{\rm Let $\al>0,$ $\de\geq 0$ and $\al+\de\leq\pi$. Let $\G_{\al,\de}(\theta_0):=\G_{\al+\de}(\theta_0)\setminus\G_{\de}(\theta_0)$
be the union of two arcs of the unit circle of lengths $\al$, the distance of
which (over the circle) is equal to $2\de$. Define (see Figure 2a)):
\beq
\label{F18G}
\G_{\al,\de}:=\G_{\al,\de}(0)=\{e^{i\theta}: \, \theta\in
[-(\de+\al),-\de]\cup [\de,\de+\al]\}.
\eeq
Then the projection $\G_{\al,\de}^x$ of $\G_{\al,\de}$ onto the real axis
is the segment $\G_{\al,\de}^x=[\cos(\al+\de),\cos\de]$, and by \eqref{ts}
for the transfinite diameter $\tau(\G_{\al,\de}^x)$ we have
$$\tau(\G_{\al,\de}^x) =\frac{\cos\de-\cos(\al+\de)}4
=\frac{\sin(\al/2)\sin(\al/2+\de)}2.$$
Hence, according to formula \eqref{F10}, for the transfinite diameter $\tau(\G_{\al,\de})$,
we obtain
\beq
\label{F19}
\tau(\G_{\al,\de})= [2\tau(\G_{\al,\de}^x)]^{1/2} =\left(\sin(\al/2)\sin(\al/2+\de)\right)^{1/2}.
\eeq
In view of Proposition \ref{pp1}(b)), from \eqref{F19} for any
$\theta_0\in(-\pi,\pi]$ we have
\beq
\label{F19a}
\tau(\G_{\al,\de}(\theta_0))=\left(\sin(\al/2)\sin(\al/2+\de)\right)^{1/2}.
\eeq}
\end{exa}
Observe that for $\de=0$ we have $\G_{\al,\de}(\theta_0)=\G_{\al}(\theta_0)$,
and the formula \eqref{F19a} becomes \eqref{F17a}.
\begin{center}
\includegraphics[height=55mm]{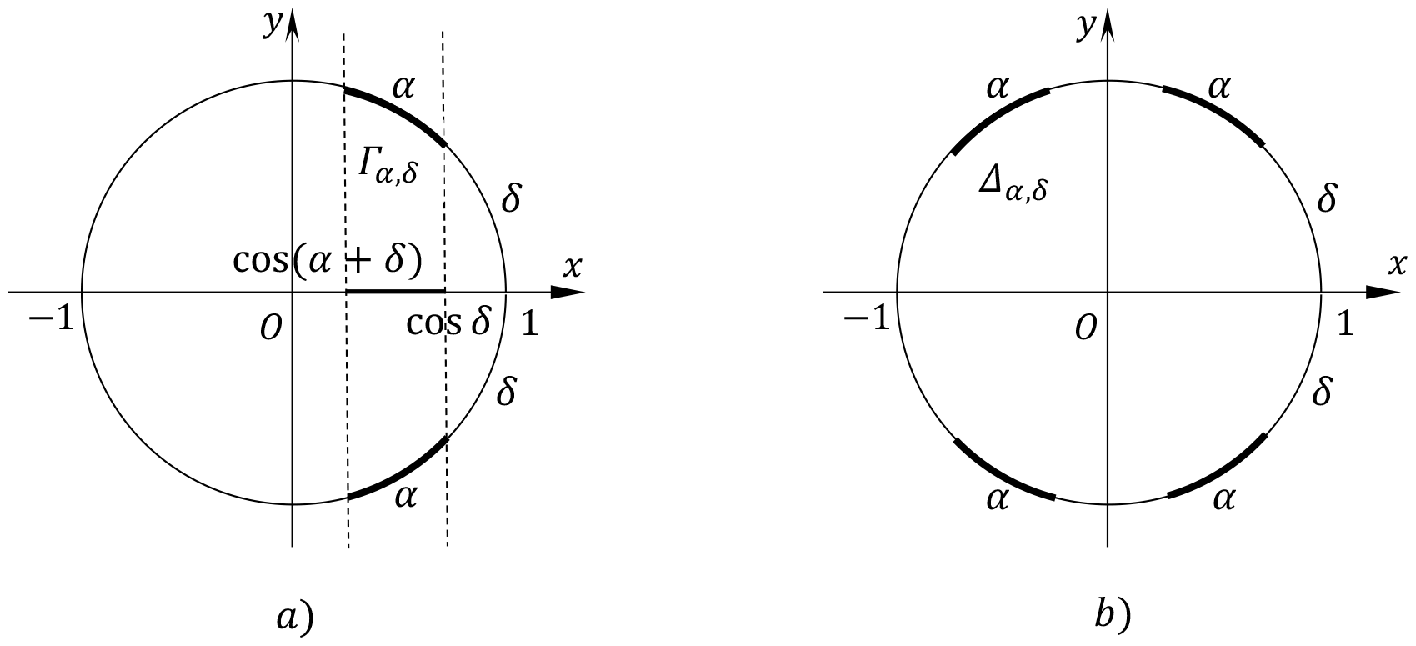}
\end{center}
\centerline{Figure 2. a) The set $\G_{\al,\de}$. b) The set $\De_{\al,\de}$.}
\medskip


\begin{exa}
\label{ex4}
{\rm Let the arc $\G_{\al,\de}$ be as in Example \ref{ex3} (see \eqref{F18G})
with $\al,\de$
satisfying $\al+\de\leq\pi/2$, that is, $\G_{\al,\de}$ is a subset of the
right semicircle $\mathbb{T}$. Denote by $\G'_{\al,\de}$ the symmetric to
$\G_{\al,\de}$ set with respect to $y$-axis, that is,
$$\G'_{\al,\de}:=\{e^{i\theta}: \, \theta\in
[-\pi+\de,-\pi+(\de+\al)] \cup [\pi-(\de+\al),\pi-\de]\}.$$
Define
$\Delta_{\al,\de}:=\G_{\al,\de}\cup\G'_{\al,\de}$,
and observe that the set $\Delta_{\al,\de}$ consists of four arcs of equal lengths $\al$,
which are symmetrically located with respect to both axes (see Figure 2b)).
Arguments similar to those applied in Example \ref{ex2} can be used to show that the set $\Delta_{\al,\de}$ is the preimage (to within rotation) of the set $\G_{2\al,2\de}$ under the mapping $p(z)=z^2$.
Hence, according to Fekete theorem (see Proposition \ref{pp1}(c)) and
formula \eqref{F19}, for the transfinite diameter
$\tau(\Delta_{\al,\de})$, we obtain
\beq
\label{F199}
\tau(\Delta_{\al,\de})= \left(\tau(\G_{2\al,2\de}\right)^{1/2}
=\left(\sin\al\sin(\al+2\de)\right)^{1/4}.
\eeq
Denote by $\Delta_{\al,\de}(\theta_0)$ the image of the set $\Delta_{\al,\de}$
under mapping $q(z)=e^{i\theta_0}z$, that is, under the rotation by the angle
$\theta_0$ around the origin. Then, in view of Proposition \ref{pp1}(b)),
from \eqref{F199} for any $\theta_0\in(-\pi,\pi]$ we have
\beq
\label{F199a}
\tau(\Delta_{\al,\de}(\theta_0))=\left(\sin\al\sin(\al+2\de)\right)^{1/4}.
\eeq
}
\end{exa}

Now we apply Theorem \ref{BB1} to obtain
the asymptotic behavior of the prediction error $\si_n(f)$ for some specific spectra.
Putting together Theorem \ref{BB1} and Examples \ref{ex1}-\ref{ex4},
we can state the following result.

\begin{thm}
Let $\ol E_f$ be the closure of the spectrum $E_f$ of a stationary process $X(t)$.
The following assertions hold.
\begin{itemize}
\item[{\rm(a)}] If $\ol E_f= \G_{2\al}(\theta_0)$, where $\G_{2\al}(\theta_0)$,
    $\theta_0\in (-\pi,\pi]$, is as in Example
\ref{ex1}, then
\beq
\label{F30}
\nonumber
\lim_{n\to\f}\sqrt[n]{\sigma_n(f)}  
=\sin\frac\al2.
\eeq
\item[{\rm(b)}] If $\ol E_f= \G(k)$, where $\G(k)$ is as in Example \ref{ex2}, then
\beq
\label{F31}
\nonumber
\lim_{n\to\f}\sqrt[n]{\sigma_n(f)}  
=\left(\sin\frac{k\al}4\right)^{1/k}.
\eeq
\item[{\rm(c)}] If $\ol E_f= \G_{\al,\de}(\theta_0)$, $\theta_0\in (-\pi,\pi]$, where $\G_{\al,\de}(\theta_0)$
is as in Example \ref{ex3}, then
\beq
\label{F32}
\nonumber
\lim_{n\to\f}\sqrt[n]{\sigma_n(f)} 
=\left(\sin(\al/2)\sin(\al/2+\de)\right)^{1/2}.
\eeq
\item[{\rm(d)}] If $\ol E_f= \Delta_{\al,\de}(\theta_0)$, where $\Delta_{\al,\de}(\theta_0)$, $\theta_0\in (-\pi,\pi]$,
is as in Example \ref{ex4}, then
\beq
\label{F33}
\nonumber
\lim_{n\to\f}\sqrt[n]{\sigma_n(f)}  
=\left(\sin\al\sin(\al+2\de)\right)^{1/4}.
\eeq
\end{itemize}
\end{thm}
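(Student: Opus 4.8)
The plan is to treat this theorem as a direct corollary of Theorem \ref{BB1}, combined with the explicit transfinite-diameter computations already carried out in Examples \ref{ex1}--\ref{ex4}. The only thing that genuinely needs checking is that the hypothesis of Theorem \ref{BB1} is met in each of the four cases; once that is confirmed, the desired limit is simply read off from the corresponding example.

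First I would verify that in each case the closure $\ol E_f$ of the spectrum is a union of finitely many closed arcs of the unit circle $\mathbb{T}$, so that the support $S_f$ consists of a finite number of intervals of $[-\pi,\pi]$---precisely the hypothesis of Theorem \ref{BB1}. This is immediate from the constructions in the examples: $\G_{2\al}(\theta_0)$ is a single arc, $\G(k)$ is a union of $k$ equidistant arcs, $\G_{\al,\de}(\theta_0)$ is a union of two arcs, and $\Delta_{\al,\de}(\theta_0)$ is a union of four arcs.

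Having checked the hypothesis, I would invoke Theorem \ref{BB1} to conclude that in every case the sequence $\{\sqrt[n]{\si_n(f)}\}$ converges and
\beq
\nonumber
\lim_{n\to\f}\sqrt[n]{\si_n(f)} = \tau(\ol E_f).
\eeq
It then remains only to substitute the value of $\tau(\ol E_f)$ computed in the relevant example. Formula \eqref{F17a} gives $\tau(\G_{2\al}(\theta_0))=\sin(\al/2)$ for part (a); formula \eqref{F18} gives $\tau(\G(k))=(\sin(k\al/4))^{1/k}$ for part (b); formula \eqref{F19a} gives $\tau(\G_{\al,\de}(\theta_0))=(\sin(\al/2)\sin(\al/2+\de))^{1/2}$ for part (c); and formula \eqref{F199a} gives $\tau(\Delta_{\al,\de}(\theta_0))=(\sin\al\sin(\al+2\de))^{1/4}$ for part (d). Each substitution yields exactly the stated asymptotic relation.

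Since the argument is purely a matter of assembly, there is no genuine obstacle left at this stage: all the substantive work has already been done, namely in proving Theorem \ref{BB1} and in establishing the four transfinite-diameter identities, which themselves rest on Robinson's projection formula \eqref{F10} (Proposition \ref{RT2}), Fekete's composition theorem (Proposition \ref{pp1}(c)), rotation invariance (Proposition \ref{pp1}(b)), and the segment formula \eqref{ts}. The only point requiring mild care is the bookkeeping---matching each part to the example that supplies its constant and confirming the admissibility ranges of $\al$ and $\de$ (for instance $\al+\de\le\pi$ in (c) and $\al+\de\le\pi/2$ in (d)), so that the sets are genuine unions of disjoint arcs and the cited formulas indeed apply.
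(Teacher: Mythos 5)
Your proposal is correct and matches the paper's own treatment exactly: the authors state this theorem as a direct consequence of Theorem \ref{BB1} together with the transfinite-diameter computations of Examples \ref{ex1}--\ref{ex4}, which is precisely your assembly argument. Your added care in checking that each $\ol E_f$ is a finite union of closed arcs (so the hypothesis of Theorem \ref{BB1} holds) is the only verification needed, and you handle it correctly.
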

\begin{rem}
{\rm The assertion (a) is a slight extension of the Rosenblatt's relation \eqref{nd2}.
The assertion (c) is an extension of assertion (a), which reduces to assertion (a) if $\de=0$.}
\end{rem}

\section{An extension of Rosenblatt's second theorem}
\label{d2}

\sn{Preliminaries}
\label{pre}
In this subsection, we analyze the asymptotic behavior of the prediction error
in the case where the spectral density $f(\la)$ of the model has a very high order
contact with zero at one or several points, so that the Szeg\"o condition \eqref{S} is violated.

Based on Rosenblatt's result for this case, namely Theorem B,
we can expect that for any deterministic process with spectral density possessing
a singularity of the type \eqref{nd5}, the rate of the prediction error $\si^2_n(f)$
should be the same as in \eqref{nd6}. However, the method applied in Rosenblatt \cite{Ros}
does not allow to prove this assertion. Here, using a different approach,
we extend Rosenblatt's second theorem to a broader class of spectral densities.

To state the corresponding results we need some definitions and preliminaries.
We first introduce the notion of weakly varying sequences and state some of
their properties.
\begin{den}
\label{kd1}
A sequence of non-zero numbers $\{a_n, \, n \in\mathbb{N}\}$ is said to be
weakly varying if
\beq
\label{k4}
\nonumber
\lim_{n\to\f} \frac{a_{n+1}}{a_{n}} =1 .
\eeq
\end{den}
For example, the sequence $\{n^{\al}, \, \, \al\in\mathbb{R}, \, n\in\mathbb{N}\}$
is weakly varying (for $\al<0$ it is weakly decreasing and for $\al>0$
it is weakly increasing), while the geometric progression $\{q^n, \, 0<q<1, \,
n \in\mathbb{N}\}$ is not weakly varying.

In the next proposition we list some simple properties of the weakly varying
sequences, which can easily be verified.
\begin{pp}
\label{p4.1}
The following assertions hold.
\begin{itemize}
\item[(a)]
If $\{a_n, \, n \in\mathbb{N}\}$ is a weakly varying sequence, then
for any $\nu\in \mathbb{N}$
\beq
\label{k41}
\lim_{n\to\f} \frac{a_{n+\nu}}{a_{n}} =1.
\eeq
\item[(b)]
If $\{a_n, \, n \in\mathbb{N}\}$ is a sequence such that $a_n\to a\neq0$
as $n\to\f$, then $\{a_n\}$ is a weakly varying sequence.
\item[(c)]
If $\{a_n, \, n \in\mathbb{N}\}$ and $\{b_n, \, n \in\mathbb{N}, \}$ are
weakly varying sequences, then $ca_n$ $(c\neq0),$
$a_n^\al \, (\al\in\mathbb{R})$, $a_nb_n$ and $a_n/b_n$ also are
weakly varying sequences.
\item[(d)]
If $\{a_n, \, n \in\mathbb{N}\}$ is a weakly varying sequence,
and $\{b_n, \, n \in\mathbb{N}\}$ is a sequence of non-zero numbers such that
\beq
\label{k5}
\lim_{n\to\infty}\frac{b_n}{a_n}=c\neq 0,
\eeq
then $\{b_n, \, n \in\mathbb{N}\}$ is also a weakly varying sequence.
\item[(e)]
If $\{a_n, \, n \in\mathbb{N}\}$ is a weakly varying sequence of
positive numbers, then it is exponentially neutral, that is,
it satisfies the relation \eqref{en} (see Definition \ref{ekd1}).
\end{itemize}
\end{pp}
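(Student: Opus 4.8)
The plan is to derive (a)--(d) directly from the elementary limit laws applied to the one-step ratio $a_{n+1}/a_n$, reserving a separate Cesàro-type argument for (e), which is the only assertion that does not reduce to a finite algebraic manipulation of ratios. Throughout, the non-vanishing hypotheses ($c\neq0$ in (c) and (d), and positivity in (c) for $a_n^{\al}$ with non-integer $\al$, as well as in (e)) are exactly what guarantee that every quotient or power written below is well defined, so the verifications stay routine.

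For (a) I would telescope: for fixed $\nu\in\mathbb{N}$ write
\beq
\frac{a_{n+\nu}}{a_n}=\prod_{j=0}^{\nu-1}\frac{a_{n+j+1}}{a_{n+j}}.
\eeq
Since $\{a_n\}$ is weakly varying, each of the $\nu$ factors tends to $1$ as $n\to\f$ (the index $n+j\to\f$ for every fixed $j$), and a product of finitely many sequences converging to $1$ converges to $1$; hence $a_{n+\nu}/a_n\to1$. Assertions (b)--(d) then follow by inspecting the ratio of consecutive terms of the new sequence and using that the limit of a product (resp. quotient, power) is the product (resp. quotient, power) of the limits: for (b), $a_{n+1}/a_n\to a/a=1$ because $a\neq0$; for (c), $\frac{c\,a_{n+1}}{c\,a_n}=\frac{a_{n+1}}{a_n}$, $\big(a_{n+1}/a_n\big)^{\al}\to1^{\al}=1$ by continuity of $x\mapsto x^{\al}$ at $x=1$, $\frac{a_{n+1}b_{n+1}}{a_nb_n}=\frac{a_{n+1}}{a_n}\,\frac{b_{n+1}}{b_n}\to1$, and similarly for $a_n/b_n$; and for (d), $\frac{b_{n+1}}{b_n}=\frac{b_{n+1}/a_{n+1}}{b_n/a_n}\cdot\frac{a_{n+1}}{a_n}\to\frac{c}{c}\cdot1=1$.

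The only step needing a genuinely different idea is (e), and I regard it as the main (indeed the sole) obstacle. Here $a_n>0$, so I would set $b_n:=\ln a_n$ and observe that weak variation gives $b_{n+1}-b_n=\ln(a_{n+1}/a_n)\to0$. It then suffices to show $b_n/n\to0$, since $\sqrt[n]{a_n}=\exp(b_n/n)$ and exponentiation is continuous. Writing $b_n=b_1+\sum_{k=1}^{n-1}c_k$ with $c_k:=b_{k+1}-b_k\to0$ gives $\frac{b_n}{n}=\frac{b_1}{n}+\frac1n\sum_{k=1}^{n-1}c_k$, and both terms tend to $0$ (the first trivially, the second because the Cesàro mean of a null sequence is null), so $b_n/n\to0$ and $\sqrt[n]{a_n}\to1$. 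Equivalently, one may simply invoke the standard theorem that $\lim_n a_{n+1}/a_n=L$ with $a_n>0$ implies $\lim_n\sqrt[n]{a_n}=L$, applied with $L=1$. Either route yields \eqref{en}, so $\{a_n\}$ is exponentially neutral in the sense of Definition \ref{ekd1}, completing the proof.
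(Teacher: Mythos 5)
Your proof is correct. Note that the paper itself supplies no argument for this proposition --- it merely asserts that the properties ``can easily be verified'' --- so there is no authorial proof to diverge from; your verification follows exactly the lines the authors evidently had in mind. Parts (a)--(d) are, as you say, immediate from the limit laws for products, quotients and continuous powers of the one-step ratio (and you are right to flag that positivity of $a_n$ is implicitly needed in (c) for $a_n^{\al}$ with non-integer $\al$). Part (e) is the only assertion requiring an actual idea, and your Ces\`aro argument on $b_n=\ln a_n$ (equivalently, the standard fact that $a_{n+1}/a_n\to L$ for positive $a_n$ forces $\sqrt[n]{a_n}\to L$) settles it cleanly; this is precisely the property the authors rely on later when they pass from weak variation of $\{\si_n(f)\}$ to exponential neutrality.
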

\begin{rem}
\label{km4.2}
{\rm
Proposition \ref{p4.1}(b) implies that if a convergent sequence
has a non-zero limit, then it is weakly varying.
However, a sequence that converges to zero need not necessarily
be weakly varying. For instance, the geometric progression
$\{q^n, \, 0<q<1, \, n \in\mathbb{N}\}$ tends to zero,
but it is not a weakly varying sequence.
Thus, the notion of weakly varying sequence is actual and
non-trivial only if the sequence converges to zero.}
\end{rem}
In the next definition we introduce certain classes of bounded functions.
\begin{den}
\label{kd2}
We define the class $B$ to be the set of all nonnegative, Riemann integrable on on $\Lambda=[-\pi,\pi]$ functions $h(\lambda)$. Also, we define the following subclasses:
\beq
\label{A}
B_+:= \{h \in B: \, h(\lambda)\geqslant m>0\},\q
B^-:= \{h \in B: \, h(\lambda)\leqslant M<\infty\}, \q B_+^-:=B_+\cap B^-.
\eeq
\end{den}

Recall that a trigonometric polynomial $t(\lambda)$ of degree $\nu$ is a
function of the form:
\beq
\label{tp1}
\nonumber
t(\lambda)= a_0+\sum_{k=1}^\nu(a_k\cos k\la + b_k\sin k\la)
= \sum_{k=-\nu}^\nu c_ke^{ik\la}, \q \la\in \mathbb{R},
\eeq
where $a_0, a_k, b_k \in \mathbb{R}$, $c_0=a_0$, $c_k=1/2(a_k-ib_k)$,
$c_{-k}=\ol c_k=1/2(a_k+ib_k)$, $k=1,2,\ldots, \nu$.

In the next proposition we list some properties of the geometric mean
of a function (see formula \eqref{a2}) and trigonometric polynomials.
\begin{pp}
\label{p4.2}
The following assertions hold.
\begin{itemize}
\item[(a)]
Let $c>0$, $\al\in\mathbb{R}$, $f(\la)\geq0$ and $g(\la)\geq0$. Then
\beq
\label{gt1}
G(c)=c, \q G(fg)=G(f)G(g), \q G(f^\al)=G^\al(f).
\eeq
\item[(b)]
{\rm (Fej\'er-Riesz theorem)}. Let $t(\lambda)$ be a nonnegative trigonometric polynomial of degree $\nu$. Then
there exists an algebraic polynomial  $s_{\nu}(z)$ $(z\in \mathbb{C})$
of same degree $\nu$, such that $s_{\nu}(z)\neq0$ for $|z|<1$, and
\beq
\label{ss2}
t(\lambda)=|s_{\nu}(e^{i\lambda})|^2.
\eeq
Under the additional condition $s_{\nu}(0)>0$ the polynomial $s_{\nu}(z)$
is determined uniquely.
\item[(c)]
Let $t(\lambda)$ and  $s_{\nu}(z)$ be as in Assertion (b). Then
\beq
\label{k9}
G(t)=|s_{\nu}(0)|^2>0,
\eeq
where $G(t)$ is the geometric mean of $t(\la)$.
\end{itemize}
\end{pp}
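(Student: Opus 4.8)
The plan is to treat the three assertions in turn, since they are of increasing depth. For assertion (a), everything follows directly from the defining formula \eqref{a2}. For a positive constant $c$ one has $\ln c\in L^1(\La)$ and $\frac1{2\pi}\inl\ln c\,d\la=\ln c$, whence $G(c)=c$. For the multiplicativity I would split into cases according to whether $\ln f$ and $\ln g$ belong to $L^1(\La)$: when both do, $\ln(fg)=\ln f+\ln g\in L^1(\La)$, and $G(fg)=G(f)G(g)$ is immediate from linearity of the integral together with $e^{a+b}=e^ae^b$; when at least one fails, the corresponding geometric mean is $0$ and so is $G(fg)$, so both sides vanish. The only point to check in the degenerate case is that $\ln(fg)\notin L^1$, which holds because the offending integral diverges to $-\f$. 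The identity $G(f^\al)=G^\al(f)$ follows the same way from $\ln(f^\al)=\al\ln f$. This step is routine.

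Assertion (b) is the Fej\'er--Riesz theorem, and here I would give the classical factorization argument. Writing $t(\la)=\sum_{k=-\nu}^\nu c_ke^{ik\la}$ with $c_{-k}=\ol c_k$ (because $t$ is real) and $c_\nu\neq0$, I would pass to the algebraic polynomial $P(z):=z^\nu t(\la)=\sum_{j=0}^{2\nu}c_{j-\nu}z^j$ of degree $2\nu$, evaluated at $z=e^{i\la}$. The symmetry $c_{-k}=\ol c_k$ translates into the functional relation $P(z)=z^{2\nu}\ol{P(1/\ol z)}$, which shows that the zeros of $P$ occur in pairs $\{z_j,\,1/\ol z_j\}$, while the nonnegativity of $t$ forces any zero on $\mathbb{T}$ to have even multiplicity. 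Grouping the $2\nu$ zeros into $\nu$ pairs and keeping from each pair the member with $|z_j|\ge1$ (splitting the boundary zeros evenly), I set $s_\nu(z):=A\prod_{j=1}^\nu(z-z_j)$. A direct computation using $\ol{s_\nu(e^{i\la})}=\ol A\prod_j(e^{-i\la}-\ol z_j)$ and the identity $(z-z_j)(1-\ol z_jz)=-\ol z_j(z-z_j)(z-1/\ol z_j)$ shows that $|s_\nu(e^{i\la})|^2=|A|^2\frac{\prod_j(-\ol z_j)}{c_\nu}\,t(\la)$, so choosing $|A|^2=c_\nu/\prod_j(-\ol z_j)$ yields $t(\la)=|s_\nu(e^{i\la})|^2$ as in \eqref{ss2}. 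By construction all zeros satisfy $|z_j|\ge1$, so $s_\nu(z)\neq0$ for $|z|<1$; normalizing by $s_\nu(0)>0$ removes the remaining unimodular ambiguity and gives uniqueness. This is the heart of the proposition, and the main obstacle is the bookkeeping: verifying that the pairing is consistent, that the even boundary multiplicities make the splitting possible, and that the constant $|A|^2$ is indeed a positive real.

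For assertion (c), I would combine (a) and (b). Since $s_\nu$ is analytic and zero-free in the open unit disk, $\ln|s_\nu(z)|$ is harmonic there, and the mean value property on circles of radius $r<1$, letting $r\to1$ (the only subtlety being the integrable logarithmic singularities coming from zeros of $s_\nu$ on $\mathbb{T}$, handled by monotone/dominated convergence since the negative part of $\ln|s_\nu|$ is integrable), gives $\frac1{2\pi}\inl\ln|s_\nu(e^{i\la})|\,d\la=\ln|s_\nu(0)|$. Applying the definition \eqref{a2} to $t=|s_\nu(e^{i\la})|^2$ then yields
\beq
\nonumber
G(t)=\exp\left\{\frac1{2\pi}\inl\ln|s_\nu(e^{i\la})|^2\,d\la\right\}
=\exp\{2\ln|s_\nu(0)|\}=|s_\nu(0)|^2,
\eeq
which is \eqref{k9}. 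Finally, $|s_\nu(0)|^2>0$ because $0$ lies in the open disk where $s_\nu$ has no zeros, so $s_\nu(0)\neq0$. Equivalently, one may bypass the harmonic-function argument and compute $G(t)$ directly from (a) and the Jensen-type evaluation $\frac1{2\pi}\inl\ln|e^{i\la}-z_0|\,d\la=\ln^+|z_0|$, which for each $z_j$ with $|z_j|\ge1$ contributes $\ln|z_j|$ and reproduces $|A|^2\prod_j|z_j|^2=|s_\nu(0)|^2$.
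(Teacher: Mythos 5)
Your proposal is correct, and for assertions (a) and (c) it follows essentially the same route as the paper: (a) is read off from the definition \eqref{a2} (you are slightly more careful, spelling out the degenerate case where one of $\ln f,\ln g\notin L^1$, which the paper dismisses as immediate), and (c) is the same harmonicity/mean-value argument applied to $\ln|s_\nu(z)|^2$. The genuine difference is in (b): the paper does not prove the Fej\'er--Riesz theorem at all, but cites Grenander and Szeg\"o, Section 1.12, whereas you reconstruct the classical factorization proof — passing to $P(z)=z^\nu t(\la)$ of degree $2\nu$, using the reflection symmetry $P(z)=z^{2\nu}\ol{P(1/\ol z)}$ to pair the zeros as $\{z_j,1/\ol z_j\}$, invoking nonnegativity to get even multiplicity on $\mathbb{T}$, and assembling $s_\nu$ from the zeros with $|z_j|\ge 1$. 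Your sketch of this is sound (including the verification that the normalizing constant $|A|^2$ is a positive real), at the cost of the bookkeeping you acknowledge. A second small improvement on your side: in (c) the paper's one-line appeal to the mean-value property silently assumes $\ln|s_\nu|^2$ is continuous up to the boundary, which fails when $t$ (hence $s_\nu$) has zeros on $\mathbb{T}$; you explicitly patch this via a limit $r\to1$ with the integrable logarithmic singularities, or alternatively via the Jensen evaluation $\frac1{2\pi}\inl\ln|e^{i\la}-z_0|\,d\la=\max\{\ln|z_0|,0\}$, which recovers $|s_\nu(0)|^2=|A|^2\prod_j|z_j|^2$ directly. So the proposal buys self-containedness and rigor at the boundary, while the paper's version buys brevity by outsourcing (b) to the literature.
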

\begin{proof}
Assertion (a) immediately follows from the definition
of the geometric mean (see formula \eqref{a2}) and the properties of
exponent and logarithm. The proof of Assertion (b) (Fej\'er-Riesz theorem) can be found, for example, in Grenander and Szeg\"o \cite{GS}, Section 1.12, p. 20-22.
Assertion (c) follows from Assertion (b). Indeed,  observing that
$\ln |s_\nu(z)|^2$ is a harmonic function, by the well-known mean-value
theorem for harmonic functions (see, e.g., Ahlfors \cite{Ah}, p.165), we have
$$\ln |s_\nu(0)|^2=\frac1{2\pi}\int_{-\pi}^{\pi}\ln |s_\nu(e^{i\lambda})|^2d\lambda=\frac1{2\pi}\int_{-\pi}^{\pi}\ln t(\lambda)d\lambda=\ln G(t),$$
and \eqref{k9} follows.
\end{proof}

In what follows we consider the class of {\it deterministic} processes for
which the sequence of
prediction errors $\{\si_n(f)\}$ is weakly varying, that is,
\beq
\label{k04}
\lim_{n\to\f} \frac{\sigma_{n+1}(f)}{\sigma_{n}(f)} =1.
\eeq
\begin{rem}
\label{km4.25}
{\rm In view of relation \eqref{k04} and Theorems \ref{BB1}-\ref{cor3.2},
for the processes from the class described above the spectral density $f(\la)$ can vanish only on a "rare" set of points, and the Szeg\"o condition \eqref{S} is now violated
due to a very high order contact with zero for at least one point
of the set $S_f^0$, where $S_f^0$ is as in \eqref{S0}.}
\end{rem}

\sn{An extension of Rosenblatt's second theorem}
We first examine the asymptotic behavior as $n\to\f$ of the ratio:
\beq
\label{k1}
\frac{\sigma_n^2(fg)}{\sigma_n^2(f)},
\eeq
where $g(\lambda)$ is some nonnegative function, such that  $fg\in L^1(\Lambda)$.

To clarify the approach, we first assume that $f(\lambda)$ is a spectral
density of a nondeterministic process, in which case the geometric mean $G(f)$
is positive (see \eqref{c013} and \eqref{a2}).
We can then write
\beq
\label{k3}
\lim_{n\to\infty}\frac{\sigma_n^2(fg)}{\sigma_n^2(f)}
=\frac{\sigma_\infty^2(fg)}{\sigma_\infty^2(f)}=\frac{2\pi G(fg)}{2\pi G(f)}
=\frac{G(f)G(g)}{G(f)}=G(g).
\eeq
It turns out that under some additional assumptions imposed on functions
$f$ and $g$, the asymptotic relation (\ref{k3}) remains also valid in the
case of deterministic processes, that is, when $\si^2_\f(f)=0$, or equivalently, $G(f)=0$.

We are now in position to state the main results of this section.\\
The following theorem describes the asymptotic behavior of the ratio \eqref{k1}
as $n\to\f$ for the class of processes described above, and essentially states
that if the spectral density $f(\la)$ is such that the sequence $\{\si_n(f)\}$
is weakly varying, and $g(\la)$ is the spectral density of a nondeterministic
process satisfying some conditions, then the sequences $\{\si_n(fg)\}$ and
$\{\si_n(f)\}$  have the same asymptotic behavior as $n\to\f$.
\begin{thm}
\label{sT1}
Suppose that $f(\la)$ is the spectral density of a deterministic
process such that the sequence $\{\si_n(f)\}$ is weakly varying, that is, the condition \eqref{k04} is satisfied. Let $g(\la)$ be a function of the form:
\beq
\label{g}
g(\la)=h(\la)\cd\frac{t_1(\la)}{t_2(\la)},
\eeq
where  $h(\la)\in B_+^-$ and $t_1(\la)$,
$t_2(\la)$ are nonnegative trigonometric polynomials, such that $f(\la)g(\la)\in A$. Then $g(\la)$ is the spectral density of a
nondeterministic process and the following relation holds:
\beq
\label{k7}
\lim_{n\to\f}\frac{\si^2_{n}(fg)}{\si^2_{n}(f)} =G(g)>0,
\eeq
where $G(g)$ is the geometric mean of $g(\la)$.
\end{thm}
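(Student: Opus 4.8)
The plan is to reduce \eqref{k7} to two elementary operations on spectral densities — multiplication by a nonnegative trigonometric polynomial and multiplication by a function of class $B_+^-$ — and to control the effect of each operation on the prediction error by a single degree-raising inequality, combined with the weakly varying hypothesis \eqref{k04}. That $g$ is nondeterministic is immediate: by Proposition \ref{p4.2}(a),(c) we have $G(g)=G(h)\,G(t_1)/G(t_2)$, where $G(h)\ge m>0$ since $h\in B_+^-$ and $G(t_1),G(t_2)>0$ since the Fej\'er--Riesz factors of $t_1,t_2$ have nonzero constant term; hence $G(g)>0$. The heart of the matter is the following degree-raising lemma: for any nonnegative trigonometric polynomial $t$ of degree $d$ (with $t\psi\in L^1$) and any density $\psi$,
\[
\si_n^2(\psi t)\ \ge\ G(t)\,\si_{n+d}^2(\psi).
\]
I would prove this from \eqref{mmm1} by writing $t=|s_d|^2$ (Proposition \ref{p4.2}(b)): for $q\in\mathcal{Q}_n^*$ one has $\int|q|^2 t\,\psi=\int|q s_d|^2\psi$, and $q s_d=s_d(0)\hat r$ with $\hat r\in\mathcal{Q}_{n+d}^*$ (constant term $1$, as $s_d(0)\neq0$); minimizing and discarding the divisibility constraint gives the bound with $G(t)=|s_d(0)|^2$ by Proposition \ref{p4.2}(c). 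Since $\psi$ is weakly varying, Proposition \ref{p4.1}(a) gives $\si_{n+d}^2(\psi)/\si_n^2(\psi)\to1$, hence $\liminf_n \si_n^2(\psi t)/\si_n^2(\psi)\ge G(t)$.

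For the matching upper bound I would first treat strictly positive $t$, so that $1/t$ is continuous and bounded: choosing trigonometric polynomials $\tau$ with $0<\tau\le 1/t$ and $G(\tau)\uparrow 1/G(t)$, the lemma applied to $\psi t$ and $\tau$ gives $\si_n^2(\psi t\tau)\ge G(\tau)\si_{n+\deg\tau}^2(\psi t)$, while $t\tau\le1$ and monotonicity (Proposition \ref{pp3}(b)) give $\si_n^2(\psi t\tau)\le\si_n^2(\psi)$. Dividing and using weak variation yields $\limsup_n \si_n^2(\psi t)/\si_n^2(\psi)\le 1/G(\tau)\to G(t)$, so the ratio converges to $G(t)$ for positive $t$. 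A general nonnegative $t$ — possibly vanishing on $\mathbb{T}$, as the $t_i$ may — is then handled through $t\le t+\e$: the positive case gives $\limsup\le G(t+\e)$, and $G(t+\e)\downarrow G(t)$ by monotone convergence (the integrands $\ln(t+\e)$ decrease to the integrable $\ln t$). Hence $\lim_n \si_n^2(\psi t)/\si_n^2(\psi)=G(t)$ for every nonnegative trigonometric polynomial $t$ and every weakly varying $\psi$.

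Next I would pass to bounded factors and to division. For $v\in B_+^-$, Riemann integrability lets me sandwich $v$ between positive trigonometric polynomials $\tau_1\le v\le \tau_2$ with $G(\tau_i)\to G(v)$, whence monotonicity and the polynomial case give $\lim_n \si_n^2(\psi v)/\si_n^2(\psi)=G(v)$. Each such operation preserves weak variation: if $\si_n^2(\psi')/\si_n^2(\psi)\to L\in(0,\f)$ and $\psi$ is weakly varying, then $\si_{n+1}^2(\psi')/\si_n^2(\psi')\to1$, so $\psi'$ is weakly varying (Proposition \ref{p4.1}). Division by $t_2$ follows from the same regularization applied to $P:=fht_1$: the degree-raising lemma applied to $fg$ and $t_2$ (note $(fg)t_2=P$) gives $\limsup_n \si_n^2(fg)/\si_n^2(P)\le 1/G(t_2)$, while $1/t_2\ge 1/(t_2+\e)$ with $1/(t_2+\e)\in B_+^-$ and $G(t_2+\e)\downarrow G(t_2)$ gives the matching $\liminf\ge 1/G(t_2)$, so $\si_n^2(fg)/\si_n^2(P)\to 1/G(t_2)$.

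It remains to assemble the chain. Starting from $f$, weakly varying by hypothesis, the maps $f\mapsto ft_1\mapsto fht_1=P$ produce weakly varying densities with $\si_n^2(P)/\si_n^2(f)\to G(t_1)G(h)$, and then $\si_n^2(fg)/\si_n^2(P)\to 1/G(t_2)$; multiplying the two ratios gives $\si_n^2(fg)/\si_n^2(f)\to G(h)G(t_1)/G(t_2)=G(g)$, which is \eqref{k7}. I expect the main obstacle to be securing the sharp constant $G(t)$ in the $\limsup$ direction: the degree-raising lemma yields only one inequality directly, and pinning the reverse side requires the $t+\e$ regularization with the monotone convergence $G(t+\e)\to G(t)$ (to absorb the circle zeros of $t_1,t_2$) together with the weakly varying hypothesis to neutralize the $\pm d$ shift in the degree.
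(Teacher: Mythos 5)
Your proposal is correct and follows essentially the same route as the paper's proof: the same chain decomposition of $\sigma_n^2(fg)/\sigma_n^2(f)$ into factors corresponding to multiplication by $t_1$, by $h$, and division by $t_2$, each controlled by the Fej\'er--Riesz factorization with a degree shift absorbed by the weakly varying hypothesis, Darboux/Weierstrass approximation of the $B_+^-$ factor by trigonometric polynomials, and $\varepsilon$-regularization to handle the zeros of $t_1$ and $t_2$. The only real difference is organizational (and rather tidy): you fold the paper's Lemmas \ref{kl2} and \ref{kl3} into the single degree-raising inequality $\sigma_n^2(\psi t)\ge G(t)\,\sigma_{n+d}^2(\psi)$, applied once with $\psi=f$ and once with $\psi=fg$, and you replace the truncation argument of Lemma \ref{kl6} by the equivalent $1/(t_2+\varepsilon)$ regularization.
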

As an immediate consequence of Theorem \ref{sT1} and Proposition \ref{p4.1}(d),
we have the following result.
\begin{cor}
\label{c4.1}
Let the spectral densities $f(\la)$ and $g(\lambda)$ be as in Theorem \ref{sT1}.
Then the sequence $\sigma_n(fg)$ is also weakly varying.
\end{cor}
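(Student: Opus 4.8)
The plan is to deduce this directly from Theorem~\ref{sT1} together with the algebraic stability of weakly varying sequences recorded in Proposition~\ref{p4.1}(d). The key observation is that the hypotheses of Theorem~\ref{sT1} already supply everything that is needed: the sequence $\{\si_n(f)\}$ is weakly varying by assumption, while the theorem itself furnishes the precise asymptotic comparison between $\{\si_n(fg)\}$ and $\{\si_n(f)\}$. So the corollary should follow with essentially no extra analytic work.

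First I would invoke Theorem~\ref{sT1} to record that
\beq
\nonumber
\lim_{n\to\f}\frac{\si^2_n(fg)}{\si^2_n(f)} = G(g) > 0.
\eeq
Since $f$ is a genuine spectral density, hence positive on a set of positive Lebesgue measure, and $g>0$ almost everywhere (being the spectral density of a nondeterministic process, by the conclusion of Theorem~\ref{sT1}), both $\si_n(f)$ and $\si_n(fg)$ are strictly positive for every $n$. Therefore taking positive square roots is legitimate and yields
\beq
\nonumber
\lim_{n\to\f}\frac{\si_n(fg)}{\si_n(f)} = \sqrt{G(g)} =: c > 0.
\eeq

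Next I would apply Proposition~\ref{p4.1}(d) with $a_n := \si_n(f)$ and $b_n := \si_n(fg)$. The sequence $\{a_n\}$ is weakly varying by the hypothesis of Theorem~\ref{sT1}, the numbers $b_n$ are non-zero by the positivity noted above, and $\lim_{n\to\f} b_n/a_n = c \neq 0$ by the displayed relation. Hence Proposition~\ref{p4.1}(d) gives at once that $\{b_n\} = \{\si_n(fg)\}$ is weakly varying, which is exactly the assertion.

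I do not expect any genuine obstacle here: the statement is an immediate corollary, and all the real work is concentrated in Theorem~\ref{sT1}. The only point that merits a word of care is the passage from the ratio of the squares to the ratio of the quantities themselves, which is harmless once one has checked that the $\si_n(fg)$ are non-zero so that the quotient sequence is well defined and the square root of a positive limit may be taken.
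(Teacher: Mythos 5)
Your proposal is correct and follows exactly the route the paper intends: the paper derives Corollary \ref{c4.1} precisely as an immediate consequence of the limit relation \eqref{k7} from Theorem \ref{sT1} combined with Proposition \ref{p4.1}(d). Your extra care about positivity of $\si_n(fg)$ and the passage from squared ratios to ratios is harmless and does not change the argument.
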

Taking into account that the sequence $\{n^{-\al}, \, \, n\in\mathbb{N}, \, \al>0\}$
is weakly varying, as an immediate consequence of Theorem \ref{sT1}, we obtain the following result.
\begin{cor}
\label{c4.2}
Let the spectral densities $f(\la)$ and $g(\lambda)$ be as in Theorem \ref{sT1},
and let $\sigma_n(f)\sim cn^{-\al}$ ($c>0, \al>0$) as $n\to\f$. Then
\beq
\label{s88}
\nonumber
\sigma_n(fg)\sim c G(g)n^{-\al} \q {\rm as} \q n\to\f,
\eeq
where $G(g)$ is the geometric mean of $g(\la)$.
\end{cor}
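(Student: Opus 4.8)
The plan is to read the statement off directly from Theorem \ref{sT1}, which already contains all of the analytic work; the corollary itself is then pure bookkeeping resting on the multiplicativity of the relation $\sim$. First I would note that the standing hypotheses on $f$ and $g$ in Corollary \ref{c4.2} are exactly those of Theorem \ref{sT1}, so that its conclusion \eqref{k7} is at our disposal, namely $\si^2_n(fg)/\si^2_n(f)\to G(g)$ with $G(g)>0$. The strict positivity of $G(g)$ matters here: it is what guarantees that the limiting constant $c\,G(g)$ is nonzero, so that the asymptotic equivalence $\sim$ in the conclusion is meaningful.

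Next I would combine this ratio limit with the assumed power-law decay $\si^2_n(f)\sim c\,n^{-\al}$ by factoring the relevant quotient into two convergent pieces:
\beq
\nonumber
\frac{\si^2_{n}(fg)}{c\,G(g)\,n^{-\al}}
=\frac{1}{G(g)}\cd\frac{\si^2_{n}(fg)}{\si^2_{n}(f)}\cd\frac{\si^2_{n}(f)}{c\,n^{-\al}}.
\eeq
As $n\to\f$ the first factor is constant, the second tends to $G(g)$ by \eqref{k7}, and the third tends to $1$ by hypothesis; hence the product tends to $G(g)^{-1}\cd G(g)\cd 1=1$. This is exactly the claim $\si^2_n(fg)\sim c\,G(g)\,n^{-\al}$.

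The only point that calls for a word of care---and it is not genuinely an obstacle---is the compatibility of the power-law hypothesis with the weak-variation requirement \eqref{k04} built into Theorem \ref{sT1}. This is automatic: since $\{n^{-\al}\}$ is weakly varying and $\si^2_n(f)/(c\,n^{-\al})\to1$, Proposition \ref{p4.1}(d) shows that $\{\si^2_n(f)\}$ is weakly varying, and then Proposition \ref{p4.1}(c) gives the same for $\{\si_n(f)\}$. Thus no separate verification of \eqref{k04} is needed, and the assertion follows at once.
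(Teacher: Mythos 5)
Your proof is correct and coincides with the paper's, which disposes of the corollary in one line by combining Theorem \ref{sT1} with the observation that $\{n^{-\al}\}$ is weakly varying; your factorization of the quotient and your appeal to Proposition \ref{p4.1}(d) simply spell this out. One caveat: you quietly restated the hypothesis as $\si^2_n(f)\sim c\,n^{-\al}$, while the corollary is phrased for $\si_n(f)$; read literally, Theorem \ref{sT1} controls the ratio of the \emph{squared} errors and would yield $\si_n(fg)\sim c\,\sqrt{G(g)}\,n^{-\al}$ rather than $c\,G(g)\,n^{-\al}$. Since the version with $\si^2_n$ throughout is the one actually needed to derive Theorem \ref{sT2} from Theorem B, this is best viewed as a notational slip in the corollary's statement rather than a gap in your argument, but the power of $G(g)$ in the conclusion does depend on which reading one takes.
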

\label{c4.3}
The next result, which immediately follows from Theorem B and Corollary \ref{c4.2},
extends Rosenblatt's Theorem B.
\begin{thm}
\label{sT2}
Let $f(\la)=f_a(\la)g(\la)$, where $f_a(\la)$ is defined by (\ref{nd4})
and $g(\la)$ satisfies the assumptions of Theorem \ref{sT1}. Then
\beq
\label{s8}
\nonumber
\de_n(f) = \si^2_n(f)\sim\frac{\Gamma^2\left(\frac{a+1}2\right)G(g)}
{\pi 2^{2-a}} \ n^{-a} 
\q {\rm as} \q n\to\f,
\eeq
where $G(g)$ is the geometric mean of $g(\la)$.
\end{thm}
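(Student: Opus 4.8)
The plan is to obtain Theorem \ref{sT2} as a direct application of Theorem \ref{sT1} to the specific density $f_a$, with Rosenblatt's Theorem B supplying the asymptotic scale. The given decomposition $f=f_a g$ already has exactly the structure required by Theorem \ref{sT1}: $f_a$ is the deterministic factor and $g=h\,t_1/t_2$ the nondeterministic multiplier. Hence the work reduces to checking that $f_a$ satisfies the hypotheses imposed on the deterministic factor, and then chaining the two asymptotic statements.

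First I would verify that $\{\si_n(f_a)\}$ is weakly varying in the sense of Definition \ref{kd1}. This is where Theorem B enters: it gives $\si^2_n(f_a)\sim C\,n^{-a}$ with $C=\Gamma^2((a+1)/2)/(\pi 2^{2-a})$, so that $\si_n(f_a)\sim\sqrt{C}\,n^{-a/2}$. Since $\{n^{-a/2}\}$ is weakly varying and $\si_n(f_a)/n^{-a/2}\to\sqrt{C}\neq0$, Proposition \ref{p4.1}(d) shows $\{\si_n(f_a)\}$ is weakly varying, i.e.\ condition \eqref{k04} holds with $f=f_a$. Because $f_a$ violates the Szeg\"o condition (see \eqref{nd5}) it is deterministic, so the deterministic-factor hypotheses of Theorem \ref{sT1} are all in force; the remaining hypotheses on $g$ (its form $h\,t_1/t_2$ and the membership $f_a g\in B$) are precisely the assumptions placed on $g$ in the statement.

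With the hypotheses checked, I would invoke Theorem \ref{sT1} with this $f_a$ and $g$ to get
\beq
\nonumber
\lim_{n\to\f}\frac{\si^2_n(f_a g)}{\si^2_n(f_a)}=G(g)>0 .
\eeq
Multiplying by the Theorem B asymptotic $\si^2_n(f_a)\sim C\,n^{-a}$ then yields
\beq
\nonumber
\si^2_n(f)=\si^2_n(f_a g)\sim G(g)\,C\,n^{-a}
=\frac{\Gamma^2\!\left(\tfrac{a+1}2\right)G(g)}{\pi 2^{2-a}}\,n^{-a},
\eeq
which is the claimed relation; equivalently one may quote Corollary \ref{c4.2} with $c=C$ and $\al=a$.

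The proof is genuinely short, since all the analytic effort lies inside Theorem \ref{sT1}, so there is no single hard step. If anything, the one point requiring care is the logical ordering: the weakly-varying hypothesis of Theorem \ref{sT1} is not assumed outright for $f_a$ but must first be extracted from Theorem B, and only then can Theorem \ref{sT1} be applied. One should also confirm that $f_a$ really falls under the deterministic case of Theorem \ref{sT1} (it does, via the singularity \eqref{nd5} and the resulting failure of \eqref{S}), so that $G(f_a)=0$ and the multiplicative splitting $\si^2_n(f_a g)\sim G(g)\,\si^2_n(f_a)$ remains legitimate even though $\si^2_\f(f_a)=0$.
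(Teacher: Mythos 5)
Your proof is correct and follows essentially the same route as the paper, which derives Theorem \ref{sT2} immediately from Theorem B together with Corollary \ref{c4.2} (itself just Theorem \ref{sT1} specialized to the weakly varying sequence $n^{-\al}$); your explicit check that Theorem B forces $\{\si_n(f_a)\}$ to be weakly varying via Proposition \ref{p4.1}(d) is exactly the step the paper leaves implicit. The only nitpick is your final aside: Corollary \ref{c4.2} as stated is phrased for $\si_n$, not $\si_n^2$, so it would be invoked with $c=\sqrt{C}$ and $\al=a/2$ rather than $c=C$ and $\al=a$ — but your direct chaining of the two asymptotics for $\si_n^2$ is already a complete argument.
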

We thus obtain the same limiting behavior for $\si^2_n(f)$ as in
the Rosenblatt's relation \eqref{nd6} up to an additional
positive factor $G(g)$.

\sn{Auxiliary lemmas}

To prove Theorem \ref{sT1}, we first establish a number of lemmas.

\begin{lem}
\label{kl2}
Assume that the sequence $\sigma_n(f)$ is weakly varying, that is,
it satisfies the condition \eqref{k04}.
Then for any nonnegative trigonometric polynomial $t(\lambda)$ we have
\beq
\label{k8}
\liminf_{n\to\infty}\frac{\sigma_n^2(ft)}{\sigma_n^2(f)}\geqslant G(t)>0,
\eeq
where $G(t)$ is the geometric mean of $t(\la)$.
\end{lem}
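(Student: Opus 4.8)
The plan is to reduce the claim to the minimization characterization of the prediction error combined with the Fej\'er--Riesz factorization of $t(\la)$. First I would invoke Proposition~\ref{p4.2}(b) to write $t(\la)=|s_\nu(e^{i\la})|^2$, where $s_\nu(z)$ is an algebraic polynomial of degree $\nu=\deg t$, nonvanishing for $|z|<1$ and normalized by $s_\nu(0)>0$; Proposition~\ref{p4.2}(c) then identifies $|s_\nu(0)|^2=G(t)>0$, which simultaneously supplies the positivity asserted in \eqref{k8}. Note also that $ft\in L^1(\Lambda)$, since $t$ is bounded and $f\in L^1(\Lambda)$, so $\sigma_n^2(ft)$ is well defined.

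The heart of the matter is a single comparison inequality obtained by absorbing $t$ into the optimal polynomial. Working with the $\mathcal{Q}_n^*$-normalization of Remark~\ref{Rem22}, I would start from $\sigma_n^2(ft)=\inl|p_n^*(e^{i\la},ft)|^2 f(\la)t(\la)\,d\la$ and use the factorization to write $|p_n^*(e^{i\la},ft)|^2\,t(\la)=|p_n^*(e^{i\la},ft)\,s_\nu(e^{i\la})|^2$. The product $R(z):=p_n^*(z,ft)\,s_\nu(z)$ is a polynomial of degree $n+\nu$ whose constant term equals $s_\nu(0)$, because $p_n^*(\cd,ft)\in\mathcal{Q}_n^*$ has constant term $1$. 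Hence $R(z)/s_\nu(0)\in\mathcal{Q}_{n+\nu}^*$, and the minimality property \eqref{mmm1} applied at level $n+\nu$, together with $|s_\nu(0)|^2=G(t)$, gives
\begin{align*}
\sigma_n^2(ft)
&=\inl\left|R(e^{i\la})\right|^2 f(\la)\,d\la\\
&=|s_\nu(0)|^2\inl\left|\frac{R(e^{i\la})}{s_\nu(0)}\right|^2 f(\la)\,d\la
\;\geq\; G(t)\,\sigma_{n+\nu}^2(f).
\end{align*}
Dividing by $\sigma_n^2(f)$ yields $\sigma_n^2(ft)/\sigma_n^2(f)\geq G(t)\,\sigma_{n+\nu}^2(f)/\sigma_n^2(f)$. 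I would stress that the constant-term normalization $\mathcal{Q}_n^*$, rather than the monic normalization $\mathcal{Q}_n$, is exactly what makes the multiplicative constant come out as $G(t)=|s_\nu(0)|^2$: the monic version would instead produce the leading-coefficient of $s_\nu$, which by the Fej\'er--Riesz factorization is in general strictly smaller in modulus than $|s_\nu(0)|$.

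The final step is the passage to the limit, and this is where the weak-variation hypothesis \eqref{k04} is decisive. Since $\{\sigma_n(f)\}$ is weakly varying, Proposition~\ref{p4.1}(a) gives $\sigma_{n+\nu}(f)/\sigma_n(f)\to1$ for the fixed shift $\nu$, hence $\sigma_{n+\nu}^2(f)/\sigma_n^2(f)\to1$, and taking $\liminf$ in the previous inequality produces \eqref{k8}. The main obstacle is precisely this degree shift: multiplying by $t$ raises the polynomial degree by $\nu$, so the comparison is inevitably between $\sigma_{n+\nu}^2(f)$ and $\sigma_n^2(f)$; for a general deterministic $f$ (where $\sigma_n(f)\to0$) this ratio need not tend to $1$, so without \eqref{k04} the lemma would fail. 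Everything else amounts to the routine bookkeeping of the factorization and the degree count.
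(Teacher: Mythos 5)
Your proposal is correct and follows essentially the same route as the paper's proof: Fej\'er--Riesz factorization, absorbing $s_\nu$ into the optimal polynomial $p_n^*(\cdot,ft)$ to produce the competitor $R(z)/s_\nu(0)\in\mathcal{Q}^*_{n+\nu}$ (the paper's $r_{n+\nu}$), the minimality inequality at level $n+\nu$, and the weak-variation hypothesis via Proposition~\ref{p4.1}(a) to remove the degree shift. No gaps.
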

\begin{proof}
Let the polynomial $t(\lambda)$ be of degree $\nu$, and let $s_\nu(z)$
be the algebraic polynomial of degree $\nu$ from the Fej\'er-Riesz
representation \eqref{ss2}.

Let $p^*_n(z,ft)$ be the optimal polynomial of degree $n$ for $f(\la)t(\la)$
in the class $\mathcal{Q}^*_n$ (see formula \eqref{mmm1}).
We now introduce a new polynomial:
\beq
\label{k98}
r_{n+\nu}(z):=p^*_n(z,ft)\frac{s_\nu(z)}{s_\nu(0)},
\eeq
and observe that $r_{n+\nu}(z)\in\mathcal{Q}^*_{n+\nu}$. Therefore
\beq
\label{k99}
\int_{-\pi}^{\pi}|r_{n+\nu}(e^{i\lambda})|^2f(\lambda)d\lambda
\geqslant \int_{-\pi}^{\pi}|p^*_{n+\nu}(e^{i\lambda},f)|^2f(\lambda)d\lambda,
\eeq
where $p^*_{n+\nu}(z,f)$ is the optimal polynomial of degree $n+\nu$
for $f(\la)$ in the class $\mathcal{Q}^*_n$.

Next, we can write
$$\sigma_n^2(ft)=\int_{-\pi}^{\pi}|p^*_n(e^{i\lambda},ft)|^2f(\lambda)t(\lambda)d\lambda
=\int_{-\pi}^{\pi}|p^*_n(e^{i\lambda},ft)s_\nu(e^{i\lambda})|^2f(\lambda)d\lambda$$
$$=|s_\nu(0)|^2\int_{-\pi}^{\pi}|r_{n+\nu}(e^{i\lambda})|^2f(\lambda)d\lambda
\geqslant |s_\nu(0)|^2\int_{-\pi}^{\pi}|p^*_{n+\nu}(e^{i\lambda},f)|^2f(\lambda)d\lambda
=|s_\nu(0)|^2\si^2_{n+\nu}(f).$$
Here the first relation follows from formula \eqref{mmm1}, the second from
Fej\'er-Riesz representation \eqref{ss2}, the third from  \eqref{k98},
the fourth from  \eqref{k99}, and the fifth from \eqref{mmm1}.
Therefore, in view of \eqref{k9}, we obtain
\beq
\label{k10}
\liminf_{n\to\infty}\frac{\sigma_n^2(ft)}{\sigma_{n+v}^2(f)}\geqslant|s_\nu(0)|^2=G(t).
\eeq
Now, taking into account (\ref{k04}) and Proposition \ref{p4.1}(a),
from (\ref{k10}) we obtain (\ref{k8}).
\end{proof}

\begin{lem}
\label{kl3}
Let the sequence $\sigma_n(f)$ satisfy (\ref{k04}), and let $t(\lambda)$ be a
nonnegative trigonometric polynomial such that the function $f(\lambda)/t(\lambda)\in B$.
Then the following inequality holds:
\beq
\label{k11}
\limsup_{n\to\infty}\frac{\sigma_n^2(f/t)}{\sigma_n^2(f)}\leqslant G(1/t),
\eeq
where $G(1/t)$ is the geometric mean of $1/t(\la)$ and $G(1/t)>0$.
\end{lem}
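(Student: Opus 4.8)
The plan is to mirror the proof of Lemma \ref{kl2}, but with the roles of $f$ and $f/t$ interchanged, by constructing an explicit competitor polynomial for $f/t$ whose cost is controlled by $\si_{n-\nu}^2(f)$. One should first note that it is \emph{not} enough to apply Lemma \ref{kl2} after the formal substitution $f\mapsto f/t$ (which would give $(f/t)\cdot t=f$): that lemma requires the base sequence to be weakly varying, whereas here the hypothesis \eqref{k04} is imposed on $\{\si_n(f)\}$ and not on $\{\si_n(f/t)\}$. Hence I would redo the construction by hand.

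First I would write $t(\la)$, of degree $\nu$, in its Fej\'er--Riesz form $t(\la)=|s_\nu(e^{i\la})|^2$ with $s_\nu(0)>0$ (Proposition \ref{p4.2}(b)). For $n>\nu$, let $p^*_{n-\nu}(z,f)$ be the optimal polynomial of degree $n-\nu$ for $f$ in the class $\mathcal{Q}^*_{n-\nu}$, and set
\[
q_n(z):=\frac{p^*_{n-\nu}(z,f)\,s_\nu(z)}{s_\nu(0)}.
\]
Since $p^*_{n-\nu}(z,f)$ has constant term $1$ and $s_\nu(0)>0$, the polynomial $q_n$ has constant term $1$ and degree at most $n$, so $q_n\in\mathcal{Q}^*_n$ (see \eqref{Q_n*}). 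Using $q_n$ as a test polynomial in the minimization \eqref{mmm1} for $f/t$ and then cancelling $|s_\nu(e^{i\la})|^2/t(\la)=1$ (valid off the finite, hence $\mu$-null, zero set of $t$), I would obtain
\[
\si_n^2(f/t)\le\int_{-\pi}^{\pi}|q_n(e^{i\la})|^2\frac{f(\la)}{t(\la)}\,d\la
=\frac{1}{|s_\nu(0)|^2}\int_{-\pi}^{\pi}|p^*_{n-\nu}(e^{i\la},f)|^2 f(\la)\,d\la
=\frac{\si_{n-\nu}^2(f)}{|s_\nu(0)|^2}.
\]

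Dividing by $\si_n^2(f)$ gives $\si_n^2(f/t)/\si_n^2(f)\le |s_\nu(0)|^{-2}\,\si_{n-\nu}^2(f)/\si_n^2(f)$. The weak-variation hypothesis \eqref{k04}, together with Proposition \ref{p4.1}(a) (relation \eqref{k41} applied with the shift $n\mapsto n-\nu$), yields $\si_{n-\nu}^2(f)/\si_n^2(f)\to 1$, so that $\limsup_{n\to\f}\si_n^2(f/t)/\si_n^2(f)\le |s_\nu(0)|^{-2}$. Finally, by \eqref{k9} we have $G(t)=|s_\nu(0)|^2$, and by the multiplicativity of the geometric mean (Proposition \ref{p4.2}(a), relation \eqref{gt1}) $G(1/t)=1/G(t)=|s_\nu(0)|^{-2}>0$; this identifies the bound as $G(1/t)$ and gives \eqref{k11}.

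The construction is routine once the correct degree is chosen, so the only real obstacle is bookkeeping: the competitor must be built from $p^*_{n-\nu}$ (not $p^*_n$) precisely so that multiplication by $s_\nu$ keeps the degree at $n$ and lands it in $\mathcal{Q}^*_n$, and the resulting index shift $n\mapsto n-\nu$ must then be absorbed by \eqref{k04}. I would also take care to record that the pointwise cancellation $|s_\nu|^2/t\equiv1$ fails only on the finite zero set of $t$, which is Lebesgue-null and hence irrelevant to the integral, and to justify $G(1/t)=1/G(t)>0$ even when $t$ vanishes on $\mathbb{T}$ (its boundary zeros produce only integrable logarithmic singularities of $\ln t$).
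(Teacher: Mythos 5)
Your proposal is correct and follows essentially the same route as the paper: the authors also form the competitor $r_n(z)=p^*_{n-\nu}(z,f)\,s_\nu(z)/s_\nu(0)\in\mathcal{Q}^*_n$, bound $\si_n^2(f/t)\le \si_{n-\nu}^2(f)/|s_\nu(0)|^2$, and absorb the index shift via \eqref{k04} and Proposition \ref{p4.1}(a), identifying the constant as $G(1/t)$ through \eqref{k9} and \eqref{gt1}. Your added remarks (why Lemma \ref{kl2} cannot simply be applied to $f/t$, and the harmlessness of the zero set of $t$) are sound clarifications but do not change the argument.
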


\begin{proof}
Let $s_\nu(z)$ be the algebraic polynomial of degree $\nu$ from the Fej\'er-Riesz
representation \eqref{ss2} for polynomial $t(\la)$, and
let $p^*_n(z,f/t)$ be the optimal polynomial of degree $n$ for $f(\la)/t(\la)$
in the class $\mathcal{Q}^*_n$ (see formula \eqref{mmm1}). For $n>\nu$ we set
$$
r_n(z):=p^*_{n-\nu}(z,f)\frac{s_\nu(z)}{s_\nu(0)},$$
and observe that $r_{n}(z)\in\mathcal{Q}^*_{n}$. Therefore, we have
\beaa
\sigma_n^2(f/t)&&=\int_{-\pi}^{\pi}|p^*_n(e^{i\lambda},f/t)|^2f(\lambda)/t(\lambda)d\lambda
\leq\int_{-\pi}^{\pi}|r_n(e^{i\lambda})|^2f(\lambda)/t(\lambda)d\lambda\\
&&=\frac1{|s_\nu(0)|^2}\int_{-\pi}^{\pi}|p^*_{n-\nu}(e^{i\lambda},f)|^2f(\lambda)d\lambda
=\frac1{|s_\nu(0)|^2}\si^2_{n-\nu}(f),
\eeaa
which, in view of  \eqref{k9} and \eqref{gt1}, implies that
\beq
\label{k121}
\limsup_{n\to\infty}\frac{\sigma_n^2(f/t)}{\sigma_{n-\nu}^2(f)}
\leqslant\frac1{|s_\nu(0)|^2}= G(1/t).
\eeq
Finally, taking into account (\ref{k04}) and Proposition \ref{p4.1}(a),
from (\ref{k121})
we obtain (\ref{k11}).
\end{proof}

In the next lemma we approximate in the space $L_1$ a function
from the class $B_+^-$ by a trigonometric polynomial with special features.
\begin{lem}
\label{kl4a}
Let $h(\lambda)$ be a function from the class $B_+^-$.
Then for any $\vs>0$ a trigonometric polynomial $t(\lambda)$
can be found to satisfy the following condition:
\bea
\label{k18a}
\|h-t\|_1:=\int_{-\pi}^{\pi}|h(\lambda)-t(\lambda)|d\lambda\leqslant\epsilon.
\eea
Moreover, if $m$ and $M$ are the constants from the Definition \ref{kd2}
(see \eqref{A}),
then the polynomial $t(\lambda)$ can be chosen so that for all $\la\in[-\pi,\pi]$
one of the following relations is satisfied:
\bea
\label{k18b}
m-\vs<t(\lambda)< h(\lambda),
\eea
\bea
\label{k18cd}
h(\lambda)<t(\lambda)<M+\vs.
\eea
\end{lem}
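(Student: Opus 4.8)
The plan is to construct, for any given $\vs>0$, a trigonometric polynomial $t(\la)$ that satisfies both the $L_1$ bound \eqref{k18a} and the one-sided bound \eqref{k18b}; this already proves the lemma, and the polynomial satisfying \eqref{k18cd} is produced by a symmetric argument. I would therefore aim at a trigonometric polynomial lying \emph{strictly} below $h$ but above $m-\vs$ and $\vs$-close to $h$ in $L_1$. The route is: approximate $h$ in $L_1$ from below first by a step function, then by a \emph{continuous} minorant, then invoke uniform trigonometric approximation, and finally restore the strict one-sided inequality by subtracting a small constant.

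First I would use the Riemann integrability of $h$. Given $\vs>0$, choose a partition of $[-\pi,\pi]$ fine enough that the lower Darboux step function $s(\la):=\inf\{h(\mu):\mu \text{ in the subinterval containing }\la\}$ has $\int_{-\pi}^{\pi}(h-s)\,d\la$ as small as desired; by construction $m\le s\le h$. Next I would round $s$ into a continuous function $\phi$: on a short interval to one side of each jump of $s$ I interpolate linearly between the two neighbouring values, arranging the ramp to lie on the side of the jump where $s$ takes the larger value, so that $\phi\le s\le h$ is preserved while $\phi$ stays between the neighbouring values and hence inside $[m,M]$. Since only finitely many short intervals are modified and each modification is bounded by $M-m$, the quantity $\|s-\phi\|_1$ is as small as we wish. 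Finally, to be able to apply uniform trigonometric approximation I would make $\phi$ periodic by resetting $\phi\equiv m$ on two tiny intervals adjacent to $\pm\pi$; since $h\ge m$, this keeps $m\le\phi\le h$ and costs an arbitrarily small amount in $L_1$. This yields a continuous, $2\pi$-periodic minorant $\phi$ with $m\le\phi\le h$ and $\|h-\phi\|_1\le\vs/2$.

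The continuous periodic function $\phi$ can now be approximated uniformly by trigonometric polynomials: by the Weierstrass theorem on uniform trigonometric approximation there is a polynomial $q(\la)$ with $\|\phi-q\|_\infty<\eta$, where $\eta>0$ is fixed below. Since $q$ need not lie below $h$, I would set $t(\la):=q(\la)-2\eta$. Then $t\le\phi-\eta<\phi\le h$, giving the strict inequality $t(\la)<h(\la)$; moreover $t\ge\phi-3\eta\ge m-3\eta$, so choosing $\eta<\vs/3$ yields $t(\la)>m-\vs$; and $\|h-t\|_1\le\|h-\phi\|_1+\|\phi-t\|_1\le \vs/2+6\pi\eta$, which is $\le\vs$ once $\eta$ is small enough. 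This establishes \eqref{k18a} together with \eqref{k18b}.

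The main obstacle is exactly this final step: maintaining the three requirements simultaneously — the \emph{strict} one-sided bound $t<h$, the lower bound $t>m-\vs$, and the $L_1$-closeness. Uniform approximation alone destroys one-sidedness, and although the Fej\'er means of $h$ preserve the bounds $m\le\cdot\le M$ and converge in $L_1$, they fail to stay on one side of $h$ at its discontinuities; the controlled downward shift by $2\eta$ is what resolves the tension. The statement \eqref{k18cd} then follows by the mirror argument: replace $s$ by the upper Darboux step function (a majorant with $h\le s\le M$), round it into a continuous periodic majorant $\psi$ with $h\le\psi\le M$ (resetting $\psi\equiv M$ near $\pm\pi$), approximate $\psi$ uniformly by a trigonometric polynomial $q$, and set $t:=q+2\eta$ to obtain $h<t<M+\vs$ with $\|h-t\|_1\le\vs$.
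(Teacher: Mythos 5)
Your proposal is correct and follows essentially the same route as the paper: a lower (resp.\ upper) Darboux step function, a continuous piecewise-linear minorant (resp.\ majorant) obtained by ramping across the jumps on the side of the larger (resp.\ smaller) value, Weierstrass uniform trigonometric approximation of the resulting continuous periodic function, and a small constant downward (resp.\ upward) shift to restore the strict one-sided inequality while keeping the $L_1$ and pointwise bounds. The only cosmetic difference is how periodicity at $\pm\pi$ is arranged (the paper builds it into the endpoint values of the step function, you reset to $m$ near the endpoints --- which of course must be done by a continuous ramp rather than literally setting $\phi\equiv m$ there).
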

\begin{proof}
We first prove the combination of inequalities \eqref{k18a} and \eqref{k18b}.

Let $\{\la_i\}$ ($-\pi=\la_0<\la_1<\cdots <\la_k=\pi$) be a partition
of the segment $[-\pi,\pi]$, and let
$s$ be the Darboux lower sum corresponding to this partition:
$$
s=\sum_{i=1}^km_i\Delta \la_i, \,\, m_i=\inf_{\la\in\Delta_i}h(\la), \,\,
\Delta_i =[\la_{i-1},\la_{i}], \,\, \Delta \la_i =\la_{i}-\la_{i-1}, \,\,
i=1,\ldots, k.
$$
On the segment $[-\pi,\pi]$ we define a step-function $\I_k(\la)$
corresponding to given partition as follows (see Figure 3):
\beq
\label{aa2}
\nonumber
\I_k(\la): = \left \{
\begin{array}{lll}
m_i, & \mbox{if \, $\la\in (\la_{i-1},\la_{i}), \,\, i=1,\ldots, k-1,$}\\
\min\{m_{i}, m_{i+1}\}, & \mbox {if $\la=\la_i,$}\\
\min\{m_{1}, m_{k}\}, & \mbox {if $\la=\la_0$ or $\la=\la_k$.}
           \end{array}
           \right.
\eeq
\begin{center}
\includegraphics[height=65mm]{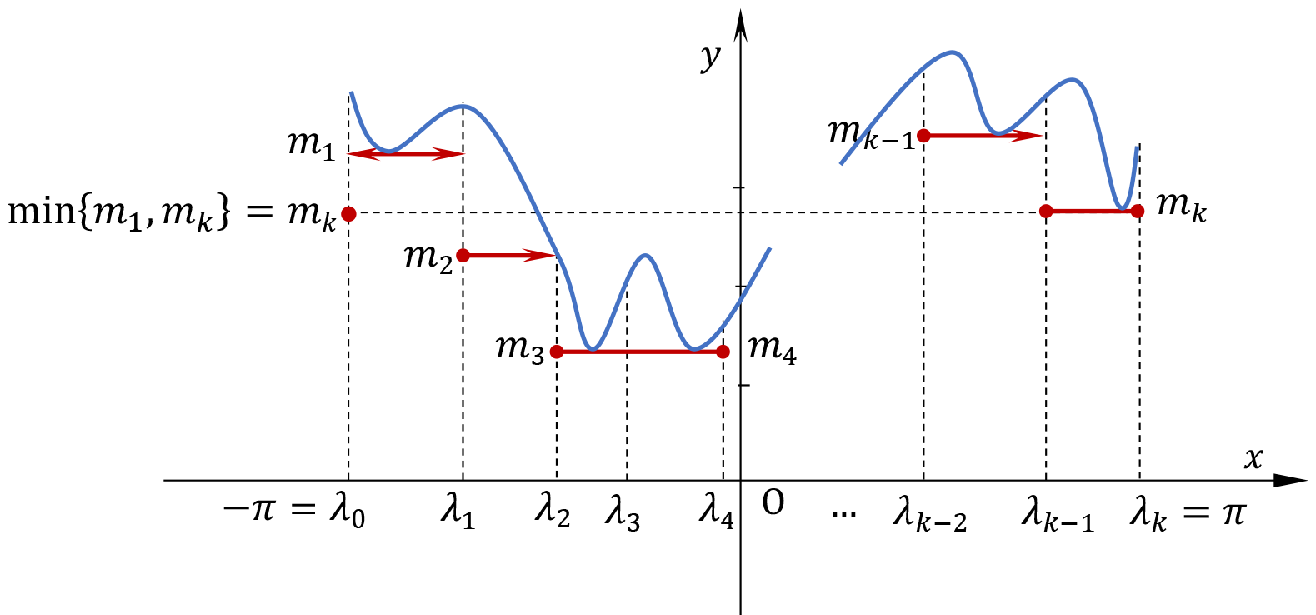}
\end{center}
\centerline{Figure 3. {\color{blue}$-\hspace{-2mm}-\hspace{-2mm}-\hspace{-2mm}-$}
graph of the function $h(\la)$; {\color{red}$\bullet\hspace{-2mm}\longrightarrow$}
graph of the function $\I_k(\la)$.}

\medskip

Observe that if $m_1=m_k$, then the steps of the function $\I_k(\la)$ are intervals,
and the number of steps is equal to $k$. In the case where
$m_1\neq m_k$, one more step (the fist or the last) is added, which
consists of one point (on Figure 3, this is the first step, consisting
of the point with coordinates $(\la_0, m_k)$).

It is clear that the function $\I_k(\la)$ satisfies the following conditions:
\bea
\label{k18c}
\I_k(\la)\leq h(\lambda), \, \, \lambda\in [-\pi,\pi] \q {\rm and} \q
\int_{-\pi}^\pi\I_k(\la)d\lambda=s.
\eea
Since the function $h(\la)$ is integrable, for an arbitrary given $\epsilon>0$
a partition of the segment $[-\pi,\pi]$ can be found so that the corresponding
Darboux lower sum satisfies the condition:
\bea
\label{k18d}
\int_{-\pi}^\pi h(\la)d\lambda-s= \int_{-\pi}^\pi[h(\la)-\I_k(\la)]d\lambda
=\|h-\I_k\|_1< \frac{\epsilon}3.
\eea

Now using the function $\I_k(\la)$ we construct a new  function that is
continuous  on $[-\pi,\pi]$.
To this end, we connect all the adjacent steps of the graph of $\I_k(\la)$
by slanting line segments as follows:
for each partition point $\la_i, i=1,\ldots, k-1$, at which the
function $\I_k(\la)$ is discontinuous, the endpoint of the lower step
of the graph with abscissa $\la_i$ we connect by a line segment with some
interior point of the adjacent upper step, with the abscissa $\la_i^*$
satisfying the condition (see Figure 4):
\bea
\label{k18m}
|\la_i-\la_i^*|<\vs/(3kM).
\eea
Then, we remove the part of the upper step lying under the constructed slanting segment.
The obtained polygonal line is a graph of some continuous piecewise linear function,
which we denote by $h_k(\la)$. According to the construction and \eqref{k18c},
this function satisfies the condition:
\bea
\label{k18e}
h_k(\la)\leq\I_k(\la)\leq h(\lambda)\leq M, \q \lambda\in [-\pi,\pi].
\eea

\begin{center}
\includegraphics[height=65mm]{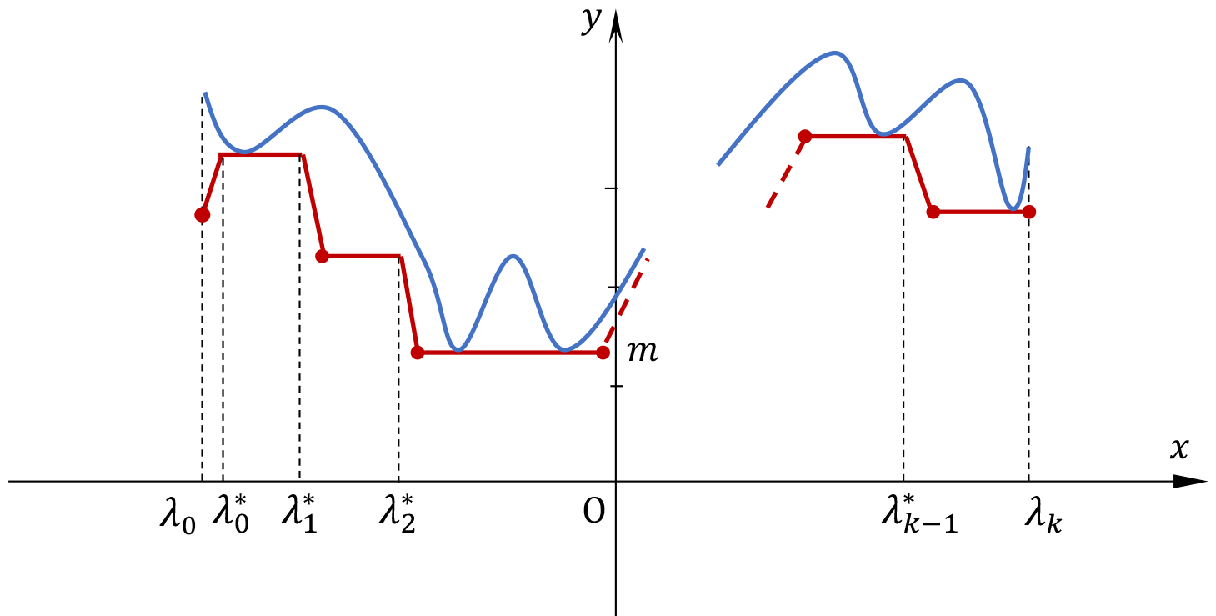}
\end{center}
\centerline{Figure 4.
{\color{blue}$-\hspace{-2mm}-\hspace{-2mm}-\hspace{-2mm}-$} graph of the function $h(\la)$; {\color{red}$\bullet\hspace{-2mm}-\hspace{-2mm}-\hspace{-2mm}-\hspace{-5mm}-\bullet$}
graph of the function $h_k(\la)$.}

\medskip

Taking into account that the functions $h_k(\la)$ and $\I_k(\la)$
coincide outside the segments $[\la_i, \la_i^*]$ (or $[\la_i^*, \la_i]$),
in view of \eqref{k18m} and \eqref{k18e}, we can write
\bea
\label{k18f}
\|\I_k-h_k\|_1= \int_{-\pi}^\pi[\I_k(\la)-h_k(\la)]d\lambda
=\sum_{i=1}^{k-1}\left|\int_{\la_i}^{\la_i^*}[\I_k(\la)-h_k(\la)]d\lambda\right|
< \frac{\epsilon}3.
\eea
Notice that the function $h_k(\la)$ is continuous on the segment $[-\pi,\pi]$
and satisfies the condition $h_k(-\pi) =h_k(\pi)$. Hence, according to Weierstrass
theorem (see, e.g., Grenander and Szeg\"o \cite{GS}, Section 1.9, p. 15),
a trigonometric polynomial $\widetilde t(\lambda)$
can be found so that uniformly for all $\lambda\in [-\pi,\pi]$,
\bea
\label{k18g}
-\frac{\epsilon}{12\pi}<h_k(\la)-\widetilde t(\la)< \frac{\epsilon}{12\pi}.
\eea
Setting $t(\la):= \widetilde t(\la)- \frac{\epsilon}{12\pi}$, from \eqref{k18g} we get
\bea
\label{k18mg}
0<h_k(\la)- t(\la)< \frac{\epsilon}{6\pi}.
\eea
Therefore
\bea
\label{k18i}
\|h_k-t\|_1= \int_{-\pi}^\pi[h_k(\la)-t(\la)]d\lambda < \frac{\epsilon}3.
\eea
Combining the inequalities \eqref{k18d}, \eqref{k18f} and \eqref{k18i},
we obtain
$$
\|h-t\|_1\leq \|h-\I_k\|_1+\|\I_k-h_k\|_1+\|h_k-t\|_1\leq  \epsilon,
$$
and the inequality \eqref{k18a} follows.

Now we proceed to prove the inequality \eqref{k18b}.
Observe first that the second inequality in \eqref{k18b} follows
from the first inequality in  \eqref{k18mg} and \eqref{k18e}.
To prove the first inequality in \eqref{k18b}, observe that by the construction of
the function $h_k(\la)$, we have
\bea
\label{k18x}
h_k(\la)\geq\min\{m_1,\ldots,m_k\}\geq m.
\eea
Next, in view of the second inequality in \eqref{k18mg}, we get
\bea
\label{k19x}
t(\la)\geq h_k(\la)- \frac{\epsilon}{6\pi}>h_k(\la)- \epsilon.
\eea
Combining \eqref{k18x} and \eqref{k19x}, we obtain the first inequality in \eqref{k18b}.

The combination of inequalities \eqref{k18a} and \eqref{k18cd} can be
proved similarly with the following changes:
instead of Darboux lower sum the upper sum should be used,
in the definition of function $\I_k(\la)$ instead of minima should be
taken maxima,
and in the construction of function $h_k(\la)$, the endpoint of the
upper step of the function $\I_k(\la)$ should be connected with an
interior point of the adjacent lower step.
\end{proof}

\begin{lem}
\label{kl4}
Let $h(\lambda)\in B_+^-$ and let the sequence $\sigma_n(f)$ satisfy (\ref{k04}).
Then the following asymptotic relation holds:
\beq
\label{k12}
\lim_{n\to\infty}\frac{\sigma_n^2(fh)}{\sigma_n^2(f)}=G(h)>0.
\eeq
\end{lem}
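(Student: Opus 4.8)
The plan is to prove \eqref{k12} by squeezing $\si_n^2(fh)$ between the prediction errors of two trigonometric-polynomial comparisons of $h$ supplied by Lemma \ref{kl4a}, and then to invoke the one-sided Lemmas \ref{kl2} and \ref{kl3}, whose estimates happen to point in exactly the two directions needed. The geometric mean $G$ will be continuous along these approximations, so both the $\liminf$ and the $\limsup$ will collapse to the common value $G(h)$.

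First, for the lower bound I would apply Lemma \ref{kl4a} to $h$ itself in the form \eqref{k18b}: given $\vs\in(0,m)$ it produces a trigonometric polynomial $t$ with $m-\vs<t(\la)<h(\la)$ and $\|h-t\|_1\le\vs$. Since $0<t\le h$, the monotonicity of the prediction error in the spectral density (Proposition \ref{pp3}(b), i.e.\ \eqref{OSm2K}) gives $\si_n^2(ft)\le\si_n^2(fh)$, and Lemma \ref{kl2} applied to the nonnegative polynomial $t$ yields $\liminf_{n\to\f}\si_n^2(fh)/\si_n^2(f)\ge\liminf_{n\to\f}\si_n^2(ft)/\si_n^2(f)\ge G(t)$.

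The key idea for the upper bound is to approximate not $h$ but its reciprocal $1/h$ from below, so that the \emph{division} Lemma \ref{kl3} can be brought to bear. Since $h\in B_+^-$ with $0<m\le h\le M$, the function $1/h$ again lies in $B_+^-$ (it is Riemann integrable and satisfies $1/M\le 1/h\le 1/m$), so Lemma \ref{kl4a} in the form \eqref{k18b} supplies a trigonometric polynomial $t$ with $1/M-\vs<t<1/h$ and $\|1/h-t\|_1\le\vs$. Then $h<1/t$, whence $fh\le f/t$ and, by \eqref{OSm2K} again, $\si_n^2(fh)\le\si_n^2(f/t)$. As $t$ is bounded below by a positive constant we have $f/t\in B$, so Lemma \ref{kl3} applies and gives $\limsup_{n\to\f}\si_n^2(fh)/\si_n^2(f)\le\limsup_{n\to\f}\si_n^2(f/t)/\si_n^2(f)\le G(1/t)$.

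Finally I would let $\vs\to0$ in both estimates and use the continuity of $G$ under $L^1$ approximation with uniform positive bounds: for functions confined to some interval $[c,C]$ with $c>0$, the Lipschitz bound $|\ln x-\ln y|\le c^{-1}|x-y|$ gives $|\ln G(u)-\ln G(v)|\le(2\pi c)^{-1}\|u-v\|_1$, so $G(t)\to G(h)$ in the lower estimate, while $1/t\to h$ forces $G(1/t)\to G(h)$ in the upper estimate (equivalently, $G(1/t)=1/G(t)$ by \eqref{gt1}). Combining the two one-sided bounds yields $G(h)\le\liminf\le\limsup\le G(h)$, hence the limit exists and equals $G(h)$, which is positive because $h\ge m>0$ gives $G(h)\ge m>0$. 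The only genuinely delicate point is orienting the approximation correctly so that the $\liminf$-lemma \eqref{k8} and the $\limsup$-lemma \eqref{k11} face the right way; this is what forces approximating $1/h$ (rather than $h$) for the upper bound. The remaining ingredients — monotonicity \eqref{OSm2K} and the $L^1$-continuity of $G$ — are routine.
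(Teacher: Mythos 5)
Your proposal is correct and follows essentially the same route as the paper: approximate $h$ from below by a trigonometric polynomial for the $\liminf$ via Lemma \ref{kl2}, approximate $1/h$ from below for the $\limsup$ via Lemma \ref{kl3}, and then let the two geometric means collapse to $G(h)$ (the paper bounds $\ln\bigl(G(1/t_2)/G(t_1)\bigr)$ via $\ln x\le x-1$ rather than your Lipschitz estimate for $\ln$, but this is the same continuity argument). The only detail worth adding is that $\vs$ must also be taken smaller than $1/M$ so that the polynomial approximating $1/h$ stays bounded away from zero, which is exactly the normalization the paper builds into \eqref{k14}.
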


\begin{proof}
Observe first that together with $h(\la)$ the function $1/h(\la)$ also
belongs to the class $B_+^-$:
\beq
\label{k12a}
m\leq h(\la)\leq M \q {\rm and }\q 1/M\leq 1/h(\la)\leq 1/m.
\eeq
By Lemma \ref{kl4a}, for a given small enough $\epsilon>0$,
we can find two trigonometric polynomials
$t_1(\lambda)$ and $t_2(\lambda)$ to satisfy the following conditions:
\bea
\label{k13}
&&\|h-t_1\|_1 <\epsilon,\qq \|1/h-t_2\|_1<\epsilon,\\
&&
{m}/{2}< t_1(\lambda)< h(\lambda), \q  1/(2M)< t_2(\lambda)<1/{h(\lambda)},
\label{k14}
\eea
and hence
\bea
\label{k144}
{m}/{2}< t_1(\lambda)< h(\lambda)<{1}/{t_2(\lambda)}<2M.
\eea
Now in view of \eqref{OSm2K}, \eqref{k144} and Lemmas \ref{kl2} and \ref{kl3}, we obtain
\beq
\label{k15}
\nonumber
\liminf_{n\to\infty}\frac{\sigma_n^2(fh)}{\sigma_n^2(f)}
\geq\liminf_{n\to\infty}\frac{\sigma_n^2(ft_1)}{\sigma_n^2(f)}\geq G(t_1),
\eeq
and
\beq
\label{k16}
\nonumber
\limsup_{n\to\infty}\frac{\sigma_n^2(fh)}{\sigma_n^2(f)}
\leq\limsup_{n\to\infty}\frac{\sigma_n^2(f/t_2)}{\sigma_n^2(f)}\leq G(1/{t_2}).
\eeq
Therefore
\beq
\label{k155}
G(t_1)\leq \liminf_{n\to\infty}\frac{\sigma_n^2(fh)}{\sigma_n^2(f)}
\leq\limsup_{n\to\infty}\frac{\sigma_n^2(fh)}{\sigma_n^2(f)}\leq G(1/{t_2}).
\eeq
Next, in view of the first inequality in \eqref{k12a}, the last inequality in \eqref{k144}
and the second inequality in \eqref{k13}, we can write
\beq
\label{k156}
\|h-1/t_2\|_1=\|h/t_2(t_2-1/h)\|_1\leqslant2M^2\epsilon.
\eeq
From the first inequality in \eqref{k13} and \eqref{k156} we get
\beq
\label{k12ab}
\|t_1-1/t_2\|_1\leqslant\|t_1-h\|_1+\|h-1/t_2\|_1\leqslant \epsilon(1+2M^2).
\eeq
We now can write
$$0<\ln\frac{G(1/t_2)}{G(t_1)}=\ln G\left(\frac 1{t_1t_2}\right)
=\frac1{2\pi}\int_{-\pi}^{\pi}\ln \frac 1{t_1(\lambda)t_2(\lambda)}d\lambda
\leqslant\frac1{2\pi}\int_{-\pi}^{\pi}\left(\frac 1{t_1(\lambda)t_2(\lambda)}-1\right)d\lambda$$
\beq
\label{k157}
=\frac1{2\pi}\int_{-\pi}^{\pi}\frac 1{t_1(\lambda)}\left(\frac 1{t_2(\lambda)} -t_1(\lambda)\right)d\lambda
\leqslant\frac1{\pi m}\|t_1-1/t_2\|_1\leqslant\frac\epsilon {\pi m}(1+2M^2).
\eeq

Here the first relation follows from the inequality $1/t_2(\lambda)>t_1(\lambda)$
(see \eqref{k144}), the second from \eqref{gt1}, the third from \eqref{a2},
the fourth from the inequality $\ln x\leq x-1$ ($x>0$), the sixth from the first
inequality in \eqref{k14}, and the seventh from \eqref{k12ab}.

Thus, the quantities $G(t_1)$ and $G(1/t_2)$ can be made arbitrarily close.
On the other hand, in view of \eqref{OSm2K} and \eqref{k144}, we have
\beq
\label{k158}
G(t_1)\leqslant G(h)\leqslant G(1/t_2).
\eeq

Finally, from (\ref{k15}), (\ref{k157}) and (\ref{k158}) we obtain \eqref{k12}.
The inequality $G(h)>0$ follows from \eqref{k144}.
\end{proof}
Taking into account Proposition \ref {p4.1}(d), from Lemma \ref{kl4}
we obtain the following result.
\begin{cor}
\label{kc1}
If the sequence $\sigma_n(f)$ is weakly varying and $h(\lambda)\in B_+^-$,
then the sequence $\sigma_n(fh)$ is also weakly varying.
\end{cor}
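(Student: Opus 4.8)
The plan is to derive the corollary directly from Lemma \ref{kl4} together with Proposition \ref{p4.1}(d), since essentially all of the analytic work has already been carried out in the lemma; what remains is a short bookkeeping argument. First I would record that, because $h\in B_+^-$, the product satisfies $m\,f(\la)\le f(\la)h(\la)\le M\,f(\la)$ for all $\la\in[-\pi,\pi]$, so $fh$ is again a genuine (nonnegative, integrable, not almost-everywhere zero) spectral density. Consequently $\si_n(fh)>0$ for every $n$, and likewise $\si_n(f)>0$, since by hypothesis $\{\si_n(f)\}$ is a weakly varying sequence and hence, by Definition \ref{kd1}, a sequence of non-zero numbers. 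This guarantees that the ratios below are well defined and that all the relevant sequences consist of non-zero terms, as required to apply Proposition \ref{p4.1}(d).

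Next I would invoke Lemma \ref{kl4}, which under the standing assumption \eqref{k04} gives
\beq
\nonumber
\lim_{n\to\f}\frac{\si_n^2(fh)}{\si_n^2(f)}=G(h)>0.
\eeq
Taking positive square roots, which is legitimate because both numerator and denominator are nonnegative and the limit $G(h)$ is strictly positive, yields
\beq
\nonumber
\lim_{n\to\f}\frac{\si_n(fh)}{\si_n(f)}=\sqrt{G(h)}=:c\neq0.
\eeq

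Finally I would apply Proposition \ref{p4.1}(d) with $a_n:=\si_n(f)$ and $b_n:=\si_n(fh)$. By hypothesis $\{a_n\}$ is weakly varying, and the previous display is precisely the condition \eqref{k5} with the non-zero constant $c=\sqrt{G(h)}$; hence $\{b_n\}=\{\si_n(fh)\}$ is weakly varying, which is the assertion of the corollary.

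I do not anticipate any serious obstacle here, as the statement is a short deduction once Lemma \ref{kl4} is available. The only points requiring a word of care are the verification that both sequences are genuinely non-zero, so that Definition \ref{kd1} and Proposition \ref{p4.1}(d) are applicable, and the passage from the squared ratio delivered by Lemma \ref{kl4} to the unsquared ratio via square roots. All of the substantive content, namely the two-sided comparison of $\si_n^2(fh)$ with $\si_n^2(f)$ obtained through the $L_1$-approximation of $h$ by trigonometric polynomials in Lemma \ref{kl4a} and the geometric-mean estimates in Lemmas \ref{kl2} and \ref{kl3}, is already packaged inside Lemma \ref{kl4}.
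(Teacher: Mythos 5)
Your proposal is correct and matches the paper's own (very brief) justification: the corollary is obtained exactly by combining Lemma \ref{kl4} with Proposition \ref{p4.1}(d). The extra care you take in checking that both sequences are non-zero and in passing from the squared ratio to the unsquared one via square roots is sound and only makes explicit what the paper leaves implicit.
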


\begin{lem}
\label{kl5}
Let the sequence $\sigma_n(f)$ be weakly varying, and let $h(\lambda)\in B^-$. Then
\beq
\label{k17}
\limsup_{n\to\infty}\frac{\sigma_n^2(fh)}{\sigma_n^2(f)}\leqslant G(h).
\eeq
\end{lem}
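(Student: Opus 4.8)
The plan is to reduce the merely upper-bounded case $h\in B^-$ to the two-sided bounded case $h\in B_+^-$ already handled in Lemma \ref{kl4}, by truncating $h$ away from zero from below. For $\vs>0$ I set $h_\vs(\la):=h(\la)+\vs$. Since $h\in B^-$ is nonnegative, Riemann integrable and bounded above by some $M<\f$, the perturbed function $h_\vs$ is bounded below by $\vs>0$ and above by $M+\vs<\f$, and it remains Riemann integrable (it is the sum of a Riemann integrable function and a constant); hence $h_\vs\in B_+^-$, and Lemma \ref{kl4} applies to it.

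The first step is a monotonicity comparison. Because $f(\la)\geq0$ and $h(\la)\leq h_\vs(\la)$ for every $\la$, we have $f(\la)h(\la)\leq f(\la)h_\vs(\la)$, so Proposition \ref{pp3}(b) gives $\si_n^2(fh)\leq\si_n^2(fh_\vs)$ for every $n$. Dividing by $\si_n^2(f)$, passing to the upper limit, and then invoking Lemma \ref{kl4} (legitimate since $\si_n(f)$ is weakly varying and $h_\vs\in B_+^-$), I obtain, for each fixed $\vs>0$,
$$\limsup_{n\to\f}\frac{\si_n^2(fh)}{\si_n^2(f)}\leq\limsup_{n\to\f}\frac{\si_n^2(fh_\vs)}{\si_n^2(f)}=G(h_\vs)=G(h+\vs).$$
Note that the weakly varying hypothesis on $\si_n(f)$ enters the argument solely through this application of Lemma \ref{kl4}.

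The remaining step is to let $\vs\downarrow0$ and verify that $G(h+\vs)\to G(h)$. By definition \eqref{a2}, $G(h+\vs)=\exp\{\frac1{2\pi}\inl\ln(h(\la)+\vs)\,d\la\}$. As $\vs\downarrow0$ the integrands $\ln(h+\vs)$ decrease pointwise to $\ln h$ and, for $\vs\le\vs_0$, are dominated above by the constant $\ln(M+\vs_0)$, which is integrable on $\Lambda$. The monotone convergence theorem for decreasing sequences therefore yields $\inl\ln(h+\vs)\,d\la\to\inl\ln h\,d\la$, the limit being $-\f$ precisely when $\ln h\notin L^1(\Lambda)$. Consequently $G(h+\vs)\to G(h)$, with $G(h)=0$ in that degenerate case, and letting $\vs\downarrow0$ in the displayed inequality gives $\limsup_{n\to\f}\si_n^2(fh)/\si_n^2(f)\le G(h)$, which is \eqref{k17}.

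The only delicate point is this last passage to the limit in the case $G(h)=0$: one must ensure that the convergence $G(h+\vs)\to0$ is genuine, i.e. that $\inl\ln(h+\vs)\,d\la\to-\f$ rather than stalling at a finite value. This is exactly what the monotone convergence theorem delivers, so I do not expect a real obstacle; the rest of the proof is the routine monotonicity comparison together with the already-established Lemma \ref{kl4}.
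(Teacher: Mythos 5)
Your proof is correct and follows essentially the same route as the paper: perturb to $h_\vs=h+\vs\in B_+^-$, apply Lemma \ref{kl4} together with the monotonicity of $\sigma_n^2$, and let $\vs\downarrow0$. The only difference is that the paper cites Grenander and Szeg\H{o} for the continuity $G(h+\vs)\to G(h)$, whereas you prove it directly via the decreasing monotone convergence theorem, correctly handling the degenerate case $G(h)=0$.
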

\begin{proof}
Observe that the function $h_\epsilon(\lambda)=h(\lambda)+\epsilon$
belongs to the class $B_+^-$, and $h_\epsilon(\lambda)\to h(\lambda)$ as $\epsilon\to0$.
Then we have the asymptotic relation
(see, Grenander and Szeg\"o \cite{GS}, Section 3.1 (d), p. 46):
\beq
\label{k177}
\lim_{\epsilon\to 0}G(h_\epsilon)=G(h).
\eeq
Hence, using Proposition \ref{pp3}(b) and Lemma \ref{kl4}, we obtain
$$\limsup_{n\to\infty}\frac{\sigma_n^2(fh)}{\sigma_n^2(f)}
\leq\lim_{n\to\infty}\frac{\sigma_n^2(fh_\epsilon)}{\sigma_n^2(f)}=G(h_\epsilon).$$
Passing to the limit as $\epsilon\to 0$, and taking into account \eqref{k177},
we obtain the desired inequality (\ref{k17}).
\end{proof}

As an immediate consequence of Lemma \ref{kl5}, we have the following result.
\begin{cor}
\label{kc2}
Let the sequence $\sigma_n(f)$ be weakly varying, and let $g(\lambda)\in B^-$
with $G(g)=0$. Then $\sigma_n(fg)=o(\sigma_n(f))$ as $n\to\infty$.
\end{cor}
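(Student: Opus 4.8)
The statement follows immediately from Lemma \ref{kl5}, and the plan is simply to apply that lemma and then unwind the resulting inequality. Since by hypothesis $g(\la)\in B^-$ and the sequence $\{\sigma_n(f)\}$ is weakly varying, the hypotheses of Lemma \ref{kl5} are met with $h=g$. Applying it gives
\beq
\nonumber
\limsup_{n\to\infty}\frac{\sigma_n^2(fg)}{\sigma_n^2(f)}\leqslant G(g),
\eeq
and since $G(g)=0$ by assumption, the right-hand side vanishes.

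Next I would invoke the nonnegativity of the ratio. Because $\{\sigma_n(f)\}$ is weakly varying it consists of non-zero numbers (Definition \ref{kd1}), so $\sigma_n^2(f)>0$ for every $n$, while $\sigma_n^2(fg)\geq0$ as a prediction error (an integral of a nonnegative integrand, cf. \eqref{mm1}); hence each term $\sigma_n^2(fg)/\sigma_n^2(f)$ is nonnegative. A nonnegative sequence whose limit superior is at most zero necessarily converges to zero, so
\beq
\nonumber
\lim_{n\to\infty}\frac{\sigma_n^2(fg)}{\sigma_n^2(f)}=0.
\eeq

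Finally, taking square roots yields $\sigma_n(fg)/\sigma_n(f)\to0$ as $n\to\f$, which is precisely the assertion $\sigma_n(fg)=o(\sigma_n(f))$. There is no genuine obstacle in this corollary: all the analytic work is already carried out in Lemma \ref{kl5} (and, beneath it, in Lemmas \ref{kl2}--\ref{kl4}, which establish the two-sided bounds in terms of geometric means). The only point deserving a moment's attention is the passage from the squared ratio to the unsquared one, which is harmless since the square-root map is continuous and both $\sigma_n(fg)$ and $\sigma_n(f)$ are nonnegative.
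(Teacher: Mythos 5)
Your proof is correct and is exactly the route the paper intends: the corollary is stated there as an immediate consequence of Lemma \ref{kl5}, obtained by setting $h=g$, using $G(g)=0$ together with the nonnegativity of the ratio, and taking square roots. Nothing further is needed.
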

Thus, multiplying singular spectral densities we obtain a spectral density
with higher "order of singularity".
\begin{lem}
\label{kl6}
Let the sequence $\sigma_n(f)$ be weakly varying, and let $h(\lambda)\in B_+$. Then
\beq
\label{k18}
\liminf_{n\to\infty}\frac{\sigma_n^2(fh)}{\sigma_n^2(f)}\geqslant G(h).
\eeq
\end{lem}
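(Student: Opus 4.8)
The plan is to mirror the argument of Lemma \ref{kl5}, but to approximate $h$ from below by a truncation rather than from above by a shift, so as to combine the monotonicity of $\sigma_n^2$ in its spectral argument (Proposition \ref{pp3}(b)) with the already-established Lemma \ref{kl4} for functions in $B_+^-$. First I would introduce, for each $N\geq m$, the truncated function $h_N(\lambda):=\min\{h(\lambda),N\}$ and observe that $h_N\in B_+^-$: since $h(\lambda)\geq m$ we have $m\leq h_N(\lambda)\leq N$, so $h_N$ is bounded both below by a positive constant and above, and Lemma \ref{kl4} is therefore legitimately applicable to it.

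Next, because $h_N(\lambda)\leq h(\lambda)$ for all $\lambda$ and $f\geq0$, we have $f(\lambda)h_N(\lambda)\leq f(\lambda)h(\lambda)$, so Proposition \ref{pp3}(b) gives $\sigma_n^2(fh_N)\leq\sigma_n^2(fh)$ for every $n$. Dividing by $\sigma_n^2(f)$, passing to the $\liminf$, and using that Lemma \ref{kl4} produces the full limit for the truncation $h_N\in B_+^-$, I obtain, for every fixed $N\geq m$,
\beq
\nonumber
\liminf_{n\to\infty}\frac{\sigma_n^2(fh)}{\sigma_n^2(f)}
\geq\liminf_{n\to\infty}\frac{\sigma_n^2(fh_N)}{\sigma_n^2(f)}
=\lim_{n\to\infty}\frac{\sigma_n^2(fh_N)}{\sigma_n^2(f)}=G(h_N).
\eeq

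It then remains to let $N\to\infty$ and verify that $G(h_N)\to G(h)$, after which passing to the limit in the displayed inequality yields $\liminf_{n\to\infty}\sigma_n^2(fh)/\sigma_n^2(f)\geq G(h)$, as claimed. As $N\uparrow\infty$ the functions $h_N$ increase pointwise to $h$, hence $\ln h_N\uparrow\ln h$ with the uniform lower bound $\ln h_N\geq\ln m>-\infty$; by the monotone convergence theorem $\frac1{2\pi}\int_{-\pi}^{\pi}\ln h_N\,d\lambda\to\frac1{2\pi}\int_{-\pi}^{\pi}\ln h\,d\lambda$, so $G(h_N)\uparrow G(h)$. This limit is finite, since the geometric mean never exceeds the arithmetic mean, $G(h)\leq\frac1{2\pi}\int_{-\pi}^{\pi}h\,d\lambda<\infty$.

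The only point requiring genuine care is this passage $G(h_N)\to G(h)$: one must keep the truncations inside $B_+^-$ so that Lemma \ref{kl4} applies and must control the logarithmic integrals. Here the situation is actually cleaner than in Lemma \ref{kl5}, because $h\in B_+$ is bounded away from zero, so the negative part of $\ln h$ is uniformly bounded and the convergence is governed directly by monotone convergence; in particular the delicate case $G(h)=0$ that complicated Lemma \ref{kl5} cannot occur, since $G(h)\geq m>0$.
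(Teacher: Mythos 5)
Your proof is correct and follows essentially the same route as the paper: truncate $h$ at level $l$ (your $N$) to land in $B_+^-$, apply Lemma \ref{kl4} to the truncation together with the monotonicity of $\sigma_n^2$ from Proposition \ref{pp3}(b), and pass to the limit using monotone convergence to get $G(h_l)\to G(h)$. The extra detail you supply on why the logarithmic integrals converge (uniform lower bound $\ln m$ plus Beppo Levi) is exactly the justification the paper invokes implicitly.
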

\begin{proof}
Let $h_l(\lambda)$ denote the truncation of $h(\lambda)$ at the level $l\in\mathbb{N}$:
$$h_l(\lambda)=\left\{
	\begin{array}{ll}
		h(\lambda), & h(\lambda)\leqslant l\\
		l, & h(\lambda)>l.
	\end{array}
	\right.
$$
Then by {\it monotone convergence theorem} of Beppo Levi
(see, e.g., Bogachev \cite{Bog}, Theorem 2.8.2, p. 130-131),
we have
\beq
\label{k19}
\lim_{l\to\infty}G(h_l)=G(h).
\eeq
Next, since $h_k(\la)\leq h(\la)$, in view of Proposition \ref{pp3}(b) and
Lemma \ref{kl4}, we get
$$\liminf_{n\to\infty}\frac{\sigma_n^2(fh)}{\sigma_n^2(f)}
\geqslant\lim_{n\to\infty}\frac{\sigma_n^2(fh_l)}{\sigma_n^2(f)}=G(h_l).$$
Hence passing to the limit as $l\to \f$, and taking into account (\ref{k19})
we obtain the desired inequality (\ref{k18}).
\end{proof}
As an immediate consequence of Lemma \ref{kl6}, we have the following result.
\begin{cor}
\label{kc3}
Let the sequence $\sigma_n(f)$ be weakly varying, $g(\lambda)\in B_+$ with $G(g)=\infty$,
and let $fg\in B$. Then $\sigma_n(f)=o(\sigma_n(fg))$ as $n\to\infty$.
\end{cor}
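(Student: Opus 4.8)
The plan is to read the statement off directly from Lemma \ref{kl6}. The two hypotheses required by that lemma are precisely what we are handed here: the sequence $\{\sigma_n(f)\}$ is weakly varying, and $g\in B_+$. I would therefore simply set $h=g$ in Lemma \ref{kl6}, obtaining
\beq
\liminf_{n\to\infty}\frac{\sigma_n^2(fg)}{\sigma_n^2(f)} \geqslant G(g).
\eeq
Since by assumption $G(g)=\infty$, this lower bound forces the limit inferior to equal $+\infty$; but a sequence whose limit inferior is $+\infty$ in fact converges to $+\infty$, so $\lim_{n\to\infty}\sigma_n^2(fg)/\sigma_n^2(f)=+\infty$.

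It then remains only to invert this relation. Both entries of the ratio are strictly positive for every finite $n$: the denominator $\sigma_n(f)$ is non-zero because $\{\sigma_n(f)\}$ is weakly varying, and Definition \ref{kd1} demands non-zero terms, while $\sigma_n(fg)$ is positive because $g\geqslant m>0$ gives $fg\geqslant mf$ pointwise, whence $\sigma_n^2(fg)\geqslant m\,\sigma_n^2(f)>0$ by the monotonicity in Proposition \ref{pp3}(b) together with the scaling $\sigma_n^2(mf)=m\,\sigma_n^2(f)$. Consequently $\sigma_n^2(f)/\sigma_n^2(fg)$ is the reciprocal of a sequence diverging to $+\infty$ and therefore tends to $0$; taking square roots yields $\sigma_n(f)/\sigma_n(fg)\to 0$, that is, $\sigma_n(f)=o(\sigma_n(fg))$ as $n\to\infty$, as claimed. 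I foresee no real obstacle here, since the entire analytic burden is carried by Lemma \ref{kl6}; the only role of the extra hypothesis $fg\in B$ is to guarantee that $fg$ is a bona fide integrable spectral density, so that $\sigma_n(fg)$ is well defined in the first place.
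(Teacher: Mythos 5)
Your proposal is correct and is exactly the argument the paper intends: the corollary is stated there as an immediate consequence of Lemma \ref{kl6}, namely taking $h=g$ so that $\liminf_n \sigma_n^2(fg)/\sigma_n^2(f)\geq G(g)=\infty$ and then inverting the ratio. Your additional remarks on the positivity of $\sigma_n(fg)$ and the role of $fg\in B$ are accurate fillers of details the paper leaves unsaid.
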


\sn{Proof of Theorem \ref{sT1}}
In this subsection we prove the main result of this section - Theorem \ref{sT1}.
\begin{proof}[Proof of Theorem \ref{sT1}]
We have
\beq
\label{mk21}
\frac{\sigma_n^2(fg)}{\sigma_n^2(f)}=\frac{\sigma_n^2(fht_1/t_2)}{\sigma_n^2(f)}
=\frac{\sigma_n^2(fht_1/t_2)}{\sigma_n^2(fht_1)}\cd\frac{\sigma_n^2(fht_1)}{\sigma_n^2(fh)}
\cd\frac{\sigma_n^2(fh)}{\sigma_n^2(f)}.
\eeq
Next, by Lemma \ref{kl4} we have
\label{k21}
\beq
\lim_{n\to\infty}\frac{\sigma_n^2(fh)}{\sigma_n^2(f)}=G(h)>0.
\eeq
This, in view of Corollary \ref{kc1}, implies that the sequence $\sigma_n^2(fh)$
is also weakly varying. Therefore, by Lemma \ref{kl2}, we have
$$\liminf_{n\to\infty}\frac{\sigma_n^2(fht_1)}{\sigma_n^2(fh)}\geq G(t_1).$$
On the other hand, since $t_1(\lambda)\in B^-$, then according to  Lemma \ref{kl5},
we get
$$\limsup_{n\to\infty}\frac{\sigma_n^2(fht_1)}{\sigma_n^2(fh)}\leq G(t_1).$$
Therefore
\beq
\label{k22}
\lim_{n\to\infty}\frac{\sigma_n^2(fht_1)}{\sigma_n^2(fh)}=G(t_1)>0
\eeq
This implies that the sequence $\sigma_n^2(fht_1)$ is also weakly varying.
Hence we can apply Lemma \ref{kl3}, to obtain
$$\limsup_{n\to\infty}\frac{\sigma_n^2(fht_1/t_2)}{\sigma_n^2(fht_1)}
\leqslant G(1/t_2).$$
Next, it is easy to see that $1/t_2\in B_+$. Hence, according to Lemma \ref{kl6},
we obtain
$$\liminf_{n\to\infty}\frac{\sigma_n^2(fht_1/t_2)}{\sigma_n^2(fht_1)}
\geqslant G(1/t_2).$$
Therefore
\beq
\label{k23}
\lim_{n\to\infty}\frac{\sigma_n^2(fht_1/t_2)}{\sigma_n^2(fht_1)}=G(1/t_2).
\eeq
Finally, combining the relations  (\ref{mk21}) - (\ref{k23}), we obtain
$$\lim_{n\to\infty}\frac{\sigma_n^2(fg)}{\sigma_n^2(f)}
=G(1/t_2)G(t_1)G(h)=G(ht_1/t_2)=G(g)>0.$$
Theorem \ref{sT1} is proved.
\end{proof}

\sn{Examples}

In this section we discuss examples demonstrating the result obtained in Theorem \ref{sT1}.
In these examples we assume that $\{X(t),$ $t\in\mathbb{Z}\}$ is a stationary deterministic
process with a spectral density $f(\la)$ satisfying the conditions of Theorem \ref{sT1},
and the function $g$ is given by formula \eqref{g}.
To compute the geometric means we use the properties stated in Proposition \ref{p4.2}(a).
\begin{exa}
\label{ex41}
{\rm Let the function $g(\la)$ be as in \eqref{g} with $h(\la)=c>0$ and
$t_1(\la)=t_2(\la)=1$, that is, $g(\la)=c>0$. Then for the geometric mean
$G(g)$ we have
\beq
\label{g0}
G(g)=G(c)=c,
\eeq
and in view of \eqref{k7}, we get
\beq
\label{k71}
\nonumber
\lim_{n\to\f}\frac{\si^2_{n}(fg)}{\si^2_{n}(f)} =G(g)=c.
\eeq
Thus, multiplying the spectral density $f(\la)$ by a constant $c>0$
changes asymptotically the prediction error by $c$ times.}
\end{exa}
\begin{exa}
\label{ex42}
{\rm Let the function $g(\la)$ be as in \eqref{g} with $h(\la)=e^{\I(\la)}$,
where $\I(\la)$ is an arbitrary odd function, and let
$t_1(\la)=t_2(\la)=1$, that is, $g(\la)=e^{\I(\la)}$.
Then for the geometric mean $G(g)$ we have
\beq
\label{g2}
G(g)=G(e^{\I(\la)})=\exp\left\{\frac1{2\pi}\inl\ln g(\la)\,d\la \right\}
=\exp\left\{\frac1{2\pi}\inl\I(\la)\,d\la \right\}=e^0=1,
\eeq
and in view of \eqref{k7}, we get
\beq
\label{k72}
\nonumber
\lim_{n\to\f}\frac{\si^2_{n}(fg)}{\si^2_{n}(f)} =G(g)=1.
\eeq
Thus, multiplying the spectral density $f(\la)$ by the function $e^{\I(\la)}$ with odd
$\I(\la)$ does not change the asymptotic behavior of the prediction error.}
\end{exa}

\begin{exa}
\label{ex43}
{\rm Let the function $g(\la)$ be as in \eqref{g} with $h(\la)=\la^2+1$ and
$t_1(\la)=t_2(\la)=1$, that is, $g(\la)=\la^2+1$. Then for the geometric mean $G(g)$
by direct calculation we obtain
\beq
\label{g3}
G(g)=\exp\left\{\frac1{2\pi}\inl\ln (\la^2+1)\,d\la \right\}
=\exp\{\ln(1+\pi^2)-2+\frac2\pi\arctan\pi\}\approx 3.3,
\eeq
and in view of \eqref{k7}, we get
\beq
\label{k72a}
\nonumber
\lim_{n\to\f}\frac{\si^2_{n}(fg)}{\si^2_{n}(f)} =G(g)
=\exp\{\ln(1+\pi^2)-2+\frac2\pi\arctan\pi\}\approx 3.3.
\eeq
Thus, multiplying the spectral density $f(\la)$ by the function $\la^2+1$
increases asymptotically the prediction error approximately by 3.3 times.}
\end{exa}

\begin{exa}
\label{ex44}
{\rm Let the function $g(\la)$ be as in \eqref{g} with $h(\la)=t_2(\la)=1$,
and $t_1(\la)=\sin^{2k}(\la-\la_0)$, where $k\in\mathbb{N}$ and $\la_0$
is an arbitrary point from $[-\pi,\pi]$, that is, $g(\la)=\sin^{2k}(\la-\la_0)$.
To compute the geometric mean $G(g)$, we first find the algebraic
polynomial $s_2(z)$ in the Fej\'er-Riesz representation \eqref{ss2}
of the nonnegative trigonometric polynomial $\sin^2(\la-\la_0)$ of degree 2.
For any $\la_0\in[-\pi,\pi]$ we have
\bea
\label{k731}
\nonumber
\sin^2(\la-\la_0) &=&|\sin(\la-\la_0)|^2= \left|\frac{e^{i(\la-\la_0)}-e^{-i(\la-\la_0)}}{2i}\right|^2\\
\nonumber
&=&\left|\frac12(e^{2i(\la-\la_0)}-1)\right|^2
=\left|\frac12(e^{-2i\la_0}e^{2i\la}-1)\right|^2=\left|s_2(e^{i\la})\right|^2,
\eea
where
\beq
\label{k741}
s_2(z)=\frac12(e^{-2i\la_0}z^2-1).
\eeq
Therefore, by \eqref{k9} and \eqref{k741}, we have
\beq
\label{k742g}
G(\sin^{2}(\la-\la_0))=|s_2(0)|^2=\left(\frac12\right)^2=\frac14.
\eeq
Now, in view of Proposition \ref{p4.2}(a) and \eqref{k742g}, for the geometric mean of $g(\la)=t_1(\la)=\sin^{2k}(\la-\la_0)$ ($k\in\mathbb{N}$), we obtain
\beq
\label{g4}
G(g)=G(\sin^{2k}(\la-\la_0))=G^k(\sin^2(\la-\la_0))=\frac1{4^k},
\eeq
and in view of \eqref{k7}, we get
\beq
\label{k76}
\nonumber
\lim_{n\to\f}\frac{\si^2_{n}(fg)}{\si^2_{n}(f)} =G(g)=\frac1{4^k}.
\eeq
Thus, multiplying the spectral density $f(\la)$ by the nonnegative trigonometric polynomial
$\sin^{2k}(\la-\la_0)$  of degree $2k$ ($k\in\mathbb{N}$), yields a
$4^k$ times asymptotic reduction of the prediction error.}
\end{exa}
\begin{exa}
\label{ex45}
{\rm Let the function $g(\la)$ be as in \eqref{g} with $h(\la)=t_1(\la)=1$,
and $t_2(\la)=\sin^{2l}(\la-\la_0)$, where $l\in\mathbb{N}$ and $\la_0$
is an arbitrary point from $[-\pi,\pi]$, that is, $g(\la)=\sin^{-2l}(\la-\la_0)$.
Then, in view of the third equality in \eqref{gt1} and \eqref{g4}
for the geometric mean $G(g)$ we have
\beq
\label{g5}
G(g)=G(\sin^{-2l}(\la-\la_0))=G^{-1}(\sin^{2l}(\la-\la_0)) ={4^l},
\eeq
and in view of \eqref{k7}, we get
\beq
\label{k76g}
\nonumber
\lim_{n\to\f}\frac{\si^2_{n}(fg)}{\si^2_{n}(f)} =G(g)={4^l}.
\eeq
Thus, dividing the spectral density $f(\la)$ by the nonnegative trigonometric polynomial
$\sin^{2l}(\la-\la_0)$  of degree $2l$ ($l\in\mathbb{N}$), yields
a $4^l$ times asymptotic increase of the prediction error.}
\end{exa}
Notice that the values of the geometric mean $G(g)$ obtained in
\eqref{g4} and \eqref{g5} do not depend on the choice of the point
$\la_0\in[-\pi,\pi]$.

Putting together Examples \ref{ex41} - \ref{ex45} and using Proposition \ref{p4.2}(a)
we have the following summary example.
\begin{exa}
\label{ex46}
{\rm Let $\{X(t), \,t\in\mathbb{Z}\}$ be a stationary deterministic
process with a spectral density $f(\la)$ satisfying the conditions
of Theorem \ref{sT1}. Let $h(\la)=ce^{\I(\la)}(\la^2+1)$, $t_1(\la)=\sin^{2k}(\la-\la_1)$
and $t_2(\la)=\sin^{2l}(\la-\la_2)$, where $c$ is an arbitrary positive constant,
$\I(\la)$ is an arbitrary odd function and $\la_1, \la_2$ are arbitrary points from $[-\pi,\pi]$.
Let the function $g(\la)$ be defined as in \eqref{g}, that is,
\beq
\label{g6}
g(\la)=h(\la)\cd\frac{t_1(\la)}{t_2(\la)}=ce^{\I(\la)}(\la^2+1)
\frac{\sin^{2k}(\la-\la_1)}{\sin^{2l}(\la-\la_2)}.
\eeq
Then, in view of Proposition \ref{p4.2}(a) and relations \eqref{g0}--\eqref{g3}
and \eqref{g4}--\eqref{g6}, we have
\bea
\label{g7}
\nonumber
G(g)&=&
G(h)\frac{G(t_1)}{G(t_2)}
=G(c)G(e^{\I})G(\la^2+1)G(\sin^{2k}(\la-\la_1))G(\sin^{-2l}(\la-\la_2))\\
&=&(c)(1)\exp\{\ln(1+\pi^2)-2+\frac2\pi\arctan\pi\}(4^{-k})(4^{l})
\approx 3.3c4^{l-k},
\eea
and in view of \eqref{k7} and \eqref{g7}, we get
\beq
\nonumber
\lim_{n\to\f}\frac{\si^2_{n}(fg)}{\si^2_{n}(f)} =G(g)
\approx 3.3c4^{l-k}.
\eeq
}
\end{exa}

\n
{\bf Acknowledgment.}
Murad S. Taqqu was supported in part by a Simons Foundation grant 569118 at Boston University.


\bigskip
\small
\noindent Nikolay M. Babayan:\\ Russian-Armenian University, Yerevan, Armenia,
e-mail: nmbabayan@gmail.com.\\
Mamikon S. Ginovyan:\\ Boston University, Boston, USA, e-mail: ginovyan@math.bu.edu.\\
Murad S. Taqqu:\\ Boston University, Boston, USA, e-mail: murad@bu.edu.

\end{document}